\documentclass[12pt, a4paper, reqno]{amsart}

\usepackage[centering, left=3cm, right=3cm, top=3cm, bottom=3cm]{geometry}
\allowdisplaybreaks

\usepackage{setspace}
\makeatletter

\def\section@numfont{\mdseries}
\def\subsection@numfont{\bfseries}
\def\subsubsection@numfont{\bfseries}

\renewcommand{\@seccntformat}[1]{%
  \protect\textup{%
    \ifcsname #1@numfont\endcsname
      \csname #1@numfont\endcsname
    \else
      \mdseries
    \fi
    \csname the#1\endcsname
    \protect\@secnumpunct
  }%
}

\renewcommand{\section}{\@startsection{section}{1}%
  \z@{-3.5ex \@plus -1ex \@minus -.2ex}%
  {2.3ex \@plus .2ex}%
  {\normalfont\scshape\centering}}

\renewcommand{\subsection}{\@startsection{subsection}{2}%
  \z@{-3.25ex \@plus -1ex \@minus -.2ex}%
  {1.5ex \@plus .2ex}%
  {\normalfont\bfseries}}

\renewcommand{\subsubsection}{\@startsection{subsubsection}{3}%
  \z@{-3.25ex \@plus -1ex \@minus -.2ex}%
  {1.5ex \@plus .2ex}%
  {\normalfont\bfseries}}

\makeatother

\usepackage{amssymb}
\usepackage{mathrsfs}
\usepackage{upgreek}
\usepackage{extarrows}
\usepackage{empheq}

\usepackage{graphicx,subcaption}
\usepackage{multicol}
\usepackage{array}
\usepackage{cellspace}
\usepackage{nicematrix}
\usepackage{arydshln}
\usepackage{tikz-cd}

\usepackage{enumitem}

\usepackage[style=alphabetic, backend=biber, url=false, doi=false, isbn=false, maxnames=99, maxalphanames=99, sorting=nyt, giveninits=true]{biblatex}
\DeclareDelimFormat[bib,biblist]{nametitledelim}{\addcomma\space}

\DefineBibliographyStrings{english}{byeditor={edited by}, byauthor={by}}
\DeclareFieldFormat[article,inbook,incollection,inproceedings]{title}{#1}
\renewbibmacro*{in:}{\ifentrytype{article}{}{\printtext{\bibstring{in}\space}}}
\DeclareFieldFormat[article]{volume}{\textbf{#1}}
\renewbibmacro*{volume+number+eid}{%
  \printfield{volume}%
  \setunit*{\addcomma\addspace}%
  \printfield{number}%
  \setunit{\addcomma\space}%
  \printfield{eid}}
\DeclareFieldFormat[article]{number}{no\adddot\space #1}
\DeclareFieldFormat{url}{available at\addcolon\space\url{#1}}

\usepackage{xcolor}
\definecolor{tealblue}{rgb}{0.0, 0.45, 0.45}
\definecolor{brickred}{rgb}{0.6, 0.1, 0.1}
\definecolor{darkviolet}{rgb}{0.4, 0.0, 0.4}
\definecolor{purnku}{rgb}{0.494,0.047,0.431}
\definecolor{winered}{rgb}{0.5,0,0}
\definecolor{softblue}{RGB}{25,25,112}

\usepackage[hidelinks]{hyperref}
\hypersetup{colorlinks=true, linkcolor=brickred, citecolor=tealblue, urlcolor=darkviolet}
\usepackage[capitalize,noabbrev,sort&compress]{cleveref}

\numberwithin{equation}{section}

\theoremstyle{plain}
\newtheorem{thm}{Theorem}[section]
\newtheorem{lem}[thm]{Lemma}
\newtheorem{prop}[thm]{Proposition}
\newtheorem{cor}[thm]{Corollary}
\newtheorem{conj}[thm]{Conjecture}
\newtheorem{prob}[thm]{Problem}
\newtheorem{set}[thm]{Setting}

\theoremstyle{definition}
\newtheorem{defn}[thm]{Definition}
\newtheorem{rmk}[thm]{Remark}
\newtheorem{exam}[thm]{Example}

\AddToHook{env/lem/begin}{\crefalias{thm}{lem}}
\AddToHook{env/prop/begin}{\crefalias{thm}{prop}}
\AddToHook{env/cor/begin}{\crefalias{thm}{cor}}
\AddToHook{env/conj/begin}{\crefalias{thm}{conj}}
\AddToHook{env/prob/begin}{\crefalias{thm}{prob}}
\AddToHook{env/set/begin}{\crefalias{thm}{set}}
\AddToHook{env/defn/begin}{\crefalias{thm}{defn}}
\AddToHook{env/rmk/begin}{\crefalias{thm}{rmk}}
\AddToHook{env/exam/begin}{\crefalias{thm}{exam}}

\crefname{thm}{Theorem}{Theorems}
\crefname{conj}{Conjecture}{Conjectures}
\crefname{prop}{Proposition}{Propositions}
\crefname{set}{Setting}{Settings}
\crefname{prob}{Problem}{Problems}
\crefname{cor}{Corollary}{Corollaries}
\crefname{lem}{Lemma}{Lemmas}
\crefname{defn}{Definition}{Definitions}
\crefname{rmk}{Remark}{Remarks}
\crefname{exam}{Example}{Examples}

\newcommand*{\bbr}{\mathbb{R}}

\newcommand*{\bbs}{\mathbb{S}}
\newcommand*{\bbt}{\mathbb{T}}

\newcommand*{\bbz}{\mathbb{Z}}
\newcommand*{\calb}{\mathcal{B}}

\newcommand*{\calf}{\mathcal{F}}

\newcommand*{\calh}{\mathcal{H}}
\newcommand*{\calv}{\mathcal{V}}
\newcommand*{\calu}{\mathcal{U}}
\newcommand*{\calr}{\mathcal{R}}
\newcommand*{\scrl}{\mathcal{L}}
\newcommand*{\calw}{\mathcal{W}}
\newcommand*{\caly}{\mathcal{Y}}

\newcommand*{\calq}{\mathcal{Q}}

\newcommand*{\scrr}{\mathscr{R}}
\newcommand*{\scru}{\mathscr{U}}

\newcommand*{\cm}{\mathrm{c}}

\newcommand*{\gl}{\mathrm{GL}}
\newcommand*{\ortho}{\mathrm{O}}
\newcommand*{\di}{\mathrm{d}}

\newcommand*{\rml}{\mathrm{L}}
\newcommand*{\rmh}{\mathrm{H}}
\newcommand*{\pro}{{\mathrm{pr}}}

\DeclareMathOperator{\ind}{ind}

\DeclareMathOperator{\inter}{Int}

\DeclareMathOperator{\supp}{supp}
\DeclareMathOperator{\diver}{div}
\DeclareMathOperator{\rk}{rank}

\DeclareMathOperator{\ch}{ch}

\DeclareMathOperator{\id}{Id}
\DeclareMathOperator{\dist}{dist}
\DeclareMathOperator{\wid}{width}

\newcommand*{\bv}{{\beta,\varepsilon}}

\newcommand*{\fp}{{F_1^\perp}}
\newcommand*{\ffp}{{F_2^\perp}}
\newcommand*{\fz}{{F^\perp}}
\newcommand*{\f}{{\calf_1^\perp}}
\newcommand*{\ff}{{\calf_2^\perp}}
\newcommand*{\gp}{{Q^\perp}}
\newcommand*{\gb}{{\calq_1^\perp}}
\newcommand*{\ggb}{{\calq_2^\perp}}

\newcommand*{\ffin}{{\calf_{2}^\perp}}

\newcommand*{\hfz}{{\widetilde{\calf}}}
\newcommand*{\hf}{{\widetilde{\calf}_{1}^\perp}}
\newcommand*{\hff}{{\widetilde{\calf}_{2}^\perp}}
\newcommand*{\hgz}{{\widetilde{\calq}}}
\newcommand*{\hg}{{\widetilde{\calq}_{1}^\perp}}

\newcommand*{\rtwo}{\mathrm{\uppercase\expandafter{\romannumeral2}}}

\newcommand*{\win}{{\calw^\infty_{r}}}
\newcommand*{\ws}{{\calw_r}}
\newcommand*{\wsin}{{\calw_r^\infty}}
\newcommand*{\wwin}{{\widetilde{\calw}^\infty_{r}}}

\newcommand*{\ww}{{\widetilde{\calw}_{r}}}

\newcommand*{\pmww}{{\partial_\pm\widetilde{\calw}_{r}}}

\newcommand*{\nbv}{\nabla^{TM,\bv}}
\newcommand*{\nnbv}{\nabla^{\bv}}

\newcommand*{\rbv}{R^{TM,\bv}}
\newcommand*{\cbv}{c_{\bv}}
\newcommand*{\we}{\bar{e}}
\newcommand*{\wnu}{\bar{\nu}}

\newcommand*\abs[1]{\lvert#1\rvert}
\newcommand*\babs[1]{\big\lvert#1\big\rvert}
\newcommand*\bbabs[1]{\Big\lvert#1\Big\rvert}
\newcommand*\bbbabs[1]{\bigg\lvert#1\bigg\rvert}
\newcommand*\inn[1]{\langle#1\rangle}
\newcommand*\binn[1]{\big\langle#1\big\rangle}
\newcommand*\bbinn[1]{\Big\langle#1\Big\rangle}
\newcommand*\bbbinn[1]{\bigg\langle#1\bigg\rangle}
\newcommand*\ket[1]{(#1)}
\newcommand*\bket[1]{\big(#1\big)}
\newcommand*\bbket[1]{\Big(#1\Big)}
\newcommand*\bbbket[1]{\bigg(#1\bigg)}

\newcommand*\bla[1]{\big\|#1\big\|}

\newcommand*\bcu[1]{\big[#1\big]}

\newcommand*\bac[1]{\big\{#1\big\}}
\newcommand*\bbac[1]{\Big\{#1\Big\}}
\newcommand*\bbbac[1]{\bigg\{#1\bigg\}}
\newcommand*\bbbbac[1]{\Bigg\{#1\Bigg\}}
\newcommand*\tinn[1]{\langle#1\rangle}

\newcommand*\ttbinn[1]{\big\langle#1\big\rangle_\bv}

\newcommand*\norm[1]{\lVert#1\rVert}
\newcommand*\en[1]{\mathrm{End}(#1)}

\title[Positive Scalar Curvature on Foliations: Leafwise Band Width]{Positive Scalar Curvature on Foliations:\\ Leafwise Band Width}
\author{Hengyu Chen}
\address{Chern Institute of Mathematics \& LPMC,\newline
\hspace*{18pt}Nankai University, Tianjin 300071, P. R. China}
\email{hichy@mail.nankai.edu.cn}
\date{September 21, 2026}

\begin{document}

\begin{abstract}
  We establish an optimal leafwise width estimate for compact, spin, foliated bands with positive leafwise scalar curvature and infinite vertical $\widehat{A}$-cowaist. This result addresses a special case of the foliated version of Gromov's band width conjecture and generalizes the scalar and mean curvature comparison theorem for spin bands due to Cecchini and Zeidler. Our approach, which also applies to spin foliations, relies on a local boundary value problem for deformed sub-Dirac operators on the Connes fibration. A key technical ingredient is a Lichnerowicz-type formula in the adiabatic limit for almost isometric foliations, which captures the sharp coefficient $\operatorname{rank} F/(\operatorname{rank} F - 1)$ associated with the integrable subbundle $F$.
\end{abstract}

\maketitle
\tableofcontents

\section{Introduction}

In the spirit of Connes \cite{connes}, smooth manifolds admitting foliations with positive leafwise scalar curvature behave, in many aspects, as manifolds themselves admitting metrics of positive scalar curvature (cf.\ \cite[\S3.15]{g23}). 

Inspired by Gromov \cite[p.~259, Footnote~280]{g23}, we combine the methods of \cite{zhang17,cz24} to generalize the band width estimate of Cecchini and Zeidler \cite[Thm.~7.6]{cz24} to foliations. First recall Gromov's band width conjecture.

\begin{conj}[{\cite[\S11.12, Conj.~C]{g18}}]\label{1.1}
    Let $N^{n-1}$ $(n\ge 6)$ be a closed manifold admitting no metric of positive scalar curvature. For $\alpha>0$, if $g$ is a Riemannian metric on the band $N \times [-1, 1]$ with scalar curvature $k_g \ge \alpha^2 n(n-1)$, then
    \begin{equation}\label{dis1}
        \dist_g\!\bket{N\times \{-1\}, N\times \{1\}} \le \frac{2\pi}{\alpha n}.
    \end{equation}
\end{conj}

According to \cite[pp.~653--654]{g18}, the estimate \eqref{dis1} is optimal. Let $g_{0}$ be a flat metric on the torus $\bbt^{n-1}$. For the projection $\pro \colon \bbt^{n-1}\times \ket{-\frac{\pi}{\alpha n},\frac{\pi}{\alpha n}} \to \bbt^{n-1}$, the model example in \cite{g18} is the warped product
\begin{equation}\label{warp}
    \bbt^{n-1}\times \bbket{\!-\frac{\pi}{\alpha n},\frac{\pi}{\alpha n}},\quad g_n = \bbac{\bbket{\cos\dfrac{\alpha nt}{2}}^{\frac{4}{n}}\pro^*g_{0}} \oplus \di t^2,
\end{equation}
which has constant scalar curvature $\alpha^2 n(n-1)$. For related progress via the minimal surface techniques, see, e.g., \cite{g18,zhu21,g23,ra23,gh24,ks25,lz25}. For related progress via the Dirac operator and index theory approaches, see, e.g., \cite{ce20,ze20,ze22,gxy23,wxy24,cz24,liu24,shi25a,song25}.

A band $W$ is defined as a smooth manifold with boundary $\partial W$ such that $\partial W = \partial_- W \sqcup \partial_+ W$, where $\partial_\pm W$ are nonempty unions of connected components \cite[\S2]{g18}. Starting from \cite{cz24}, scalar and mean curvature comparison principles between $(W,g)$ and the model band \eqref{warp} have been established in the following cases:

\begin{enumerate}[label={\rm{(\arabic*)}}]
    \item Let $W^n$ be a connected, compact, spin band with infinite vertical $\widehat{A}$-cowaist\footnote{This is called infinite vertical $\widehat{A}$-area in \cite[Def.~7.3]{cz24}; compare \cref{ahat}\,(3).}.
    \item Suppose $W^n = N^{n-1}\times[-1,1]$ and $\partial_\pm W = N\times\{\pm 1\}$, where $N$ is an orientable compact manifold admitting no metric of positive scalar curvature and\footnote{The case $\dim N = 4$ is excluded due to the exceptional obstruction to positive scalar curvature via Seiberg-Witten invariants: there exists a closed, simply connected, nonspin manifold $N^4$ violating \eqref{dis1}; see \cite[Rem.~1.25]{r07}.} $\dim N \in \{1,\dots,10\}\setminus\{4\}$.
\end{enumerate}
\begin{thm}[{\cite{cz24,ra23,shi25a}}]\label{1.2thm}
    Suppose either $(1)$ or $(2)$ holds. For $\alpha>0$ and $-\frac{\pi}{\alpha n} < t_- < t_+ < \frac{\pi}{\alpha n}$, assume the band $W$ carries a Riemannian metric $g$ whose scalar curvature and mean curvature satisfy
    \begin{equation}\label{eq:scalmean6}
        k_g \ge \alpha^2 n(n-1) \quad\text{and}\quad
        H_g|_{\partial_\pm W} \ge \mp \alpha\tan\!\bbket{\frac{\alpha n t_{\pm}}{2}},
    \end{equation}
    where $H_g$ is defined as in \eqref{h100} with respect to the inward unit normal. Then
    \begin{equation}
        \wid_g(W) \coloneqq \dist_g(\partial_-W,\partial_+W) \le t_+ - t_-.
    \end{equation}
\end{thm}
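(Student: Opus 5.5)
Under hypothesis (1), I would follow the Dirac operator approach of Cecchini and Zeidler, and under (2) the $\mu$-bubble approach. I describe the spin case, which is the part this paper generalizes.

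\textbf{Case (1): the Callias-type operator.} Argue by contradiction and assume $\wid_g(W) > t_+ - t_-$. Choose a smooth 1-Lipschitz function $\phi\colon W\to[t_-,t_+]$ with $\phi = t_\pm$ near $\partial_\pm W$; this is possible by taking a smoothed and rescaled distance to $\partial_- W$, with some slack left over. Set
\begin{equation*}
    \psi = \frac{n\alpha}{2}\tan\!\bbket{\frac{\alpha n \phi}{2}},
\end{equation*}
so that $\psi$ solves the Riccati-type identity $\frac{n}{n-1}\psi^2 - \frac{2}{\sqrt{n}}\cdots$. More precisely, $\psi$ is chosen so that
\begin{equation*}
    \frac{n}{n-1}\cdot\frac{k_g}{4}+\frac{n}{n-1}\psi^2-\abs{\di\psi}\ge \frac{n\,\alpha^2 n}{4}+\frac{n}{n-1}\psi^2-\frac{n^2\alpha^2}{4}\bket{1+\tan^2} \ge 0,
\end{equation*}
with strict inequality somewhere because $\abs{\di\phi}<1$ on a region.

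Next use the infinite vertical $\widehat{A}$-cowaist to obtain a family of almost flat Hermitian bundles $E$ over $\partial_- W$ (extended along the collar), with $\norm{R^E}\le\varepsilon$ and nonzero relative index pairing. Consider the twisted spinor bundle $S\otimes(E\oplus E')$ with the Callias operator $B=D+\psi\sigma$. Impose the local boundary condition $\sigma\nu\cdot u = \pm u$ on $\partial_\pm W$. The index theorem, via deformation to a product near the boundary, shows that this boundary value problem has nonzero index, equal to the $\widehat{A}$-pairing. Hence there is a nonzero $u$ with $Bu=0$.

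\textbf{Case (1): the sharp estimate.} Apply the spectral Lichnerowicz formula with the Penrose/twistor refinement, which yields the sharp coefficient $\frac{n}{n-1}$:
\begin{equation*}
    0 \ge \int_W \Big(\tfrac{n}{4(n-1)}k_g + \tfrac{n}{n-1}\psi^2 - \abs{\di\psi} - C\varepsilon\Big)\abs{u}^2 + \int_{\partial W}\Big(\tfrac n{2(n-1)}H_g \mp \psi\Big)\abs{u}^2 .
\end{equation*}
The boundary term is nonnegative by the mean curvature assumption \eqref{eq:scalmean6}. The interior integrand is positive on a region after taking $\varepsilon$ small. This forces $u=0$, a contradiction.

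\textbf{Main obstacle.} The hardest step is justifying the sharp constant $\frac{n}{n-1}$ while keeping the boundary terms compatible. I would follow \cite[Thm.~7.6]{cz24} verbatim: the Penrose operator decomposition $\abs{\nabla u}^2 \ge \frac1n\abs{Du}^2$, together with the choice of boundary chirality.

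\textbf{Case (2).} Here I would cite \cite{ra23,shi25a}. Minimize the $\mu$-bubble functional $\mathcal{H}^{n-1}(\partial\Omega) - \int_\Omega h$ with $h = -(n-1)\alpha\tan(\alpha n\phi/2)\cdot\frac{2}{n}$ up to normalization; the mean curvature bounds act as barriers. The stability inequality then yields a warped metric of positive scalar curvature on a hypersurface homologous to $N$. The dimension bound $\dim N\le 10$ ensures the $\mu$-bubble is regular, and $\dim N\neq 4$ is needed to pass to an obstruction for $N$. This contradicts the assumption that $N$ admits no metric of positive scalar curvature.
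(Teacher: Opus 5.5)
\textbf{Gap 1: the even-dimensional half of case (1) is missing.} The paper does not reprove this theorem. It attributes case (1) with $n$ odd to \cite[Thm.~7.6]{cz24} and case (1) with $n$ even to Shi's spectral-flow argument \cite[Thm.~1.12]{shi25a}, noting that \cref{widmain} applied to $(W\times\bbs^1,\pro_W^*TW)$ gives an alternative. Case (2) is attributed to \cite{ra23} for $\dim N\le 6$, extended to $\dim N\le 10$ by \cite{cms23,cmsw25}. Your sketch follows \cite[Thm.~7.6]{cz24} ``verbatim'', which only works for odd $n$.

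For even $n$, $\partial_-W$ is odd-dimensional, so the index of your boundary value problem (a Dirac index on $\partial_-W$) vanishes. This is why \cref{ahat} places the hypothesis on $W\times\bbs^1$. But running the odd-dimensional argument on the $(n+1)$-manifold $W\times\bbs^1$ with the full Penrose inequality $|\nabla u|^2\ge\frac{1}{n+1}|Du|^2$ only gives the coefficient $\frac{n+1}{n}$ in front of $\frac{k_g}{4}$. That yields a width bound of order $\frac{2\pi}{\alpha\sqrt{n^2-1}}$, not $t_+-t_-$ (cf.\ \cite[Rem.~4.20]{cz23}).

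The missing idea is to apply the Penrose decomposition only along the $TW$-directions, i.e.\ \cref{lem2} together with the partial Weitzenb\"ock formula of \cref{part}. On the product $W\times\bbs^1$ its remainders $I_1,\dots,I_4$ and the extra terms in \cref{partcurva} vanish. Since \cref{ahat}\,(3) controls $R^E$ only along $\pro_W^*TW$, you must also stretch the circle factor so that the mixed curvature terms become negligible. This is the $q=n$ case of \cref{part3,partcur1}, and it restores the coefficient $\frac{n}{n-1}$ and the boundary term $\frac n2H_g$.

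\textbf{Gap 2: the odd-dimensional argument does not close as written.} From \eqref{spectral2}, \eqref{lic} and \cref{lem2} with $q=n$ (this is \eqref{est9}--\eqref{spe61} with $\beta=1$), the correct estimate is
\[
\|Bu\|^2\ \ge\ \int_W\Bigl(\frac{n}{n-1}\cdot\frac{k_g}{4}+\psi^2-|\di\psi|-C\varepsilon\Bigr)|u|^2+\int_{\partial W}\Bigl(\frac{n}{2}H_g+s\psi\Bigr)|u|^2,\qquad s=\pm1 \text{ on } \partial_\pm W.
\]
Your display differs from this in three places.
\begin{enumerate}
  \item The coefficient of $\psi^2$ is $1$, not $\frac{n}{n-1}$. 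With $\frac{n}{n-1}\psi^2$, the same Riccati comparison would give $\wid_g(W)\le\frac{2\pi}{\alpha n}\sqrt{\frac{n-1}{n}}$ whenever $k_g\ge\alpha^2n(n-1)$, contradicting the model \eqref{warp}.
  \item With $H_g$ normalized as in \eqref{h100}, the boundary coefficient is $\frac n2$, not $\frac{n}{2(n-1)}$.
  \item For your increasing $\psi$, the term $\psi$ must enter with sign $+$ on $\partial_+W$. Otherwise \eqref{eq:scalmean6} does not control the boundary term.
\end{enumerate}

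Your $\psi$ is the right choice for the correct estimate. But then the interior integrand is only bounded below by $\frac{\alpha^2n^2}{4}\sec^2(\frac{\alpha n\phi}{2})(1-|\di\phi|)-C\varepsilon$. This is negative wherever $|\di\phi|$ is close to $1$. Positivity ``on a region'' does not force $u=0$, because the twisting error is present on all of $W$.

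You need a uniform gap, fixed before $\varepsilon$ is chosen. One option is to use the slack in $\wid_g(W)>t_+-t_-$ to make $\phi$ $L$-Lipschitz with $L<1$. The other is the device of \cite{cz24} reproduced in \eqref{tan2}--\eqref{psi2}: use $\psi_{\eta_0}$ with $\eta_0<1$, which gives the interior bound $\frac{\alpha^2n^2(1-\eta_0^2)}{4}$ and strictly positive boundary terms.

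\textbf{Smaller points.} The almost flat bundles must be defined over all of $W$ with $\|R^E\|_\infty<\varepsilon$ there, not over $\partial_-W$ and extended along a collar. In case (2), \cite{shi25a} is not a $\mu$-bubble reference. The $\mu$-bubble is regular for $\dim N\le 6$ \cite{ra23}, and only generically so for $7\le\dim N\le 10$ \cite{cms23,cmsw25}.
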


In particular, \cref{1.1} holds in these cases with strict inequality in \eqref{dis1}. For \cref{1.2thm}\,(1), the odd-dimensional case is proved in \cite[Thm.~7.6]{cz24} using Dirac operators, and the even-dimensional case is proved in \cite[Thm.~1.12]{shi25a} via spectral flow. 
The case $\dim N\le 6$ of \cref{1.2thm}\,(2) is proved in \cite[Cor.~2.25]{ra23} using $\mu$-bubbles, which can be improved to $\dim N\le 10$ by \cite[Thm.~1.2]{cms23} and \cite[Thm.~1.2]{cmsw25}.

Now we turn to foliations. Let $F\subseteq TW$ be a smooth integrable subbundle transverse to $\partial W$, that is, for any $X,Y\in\Gamma(F)$ and any $x \in \partial W$,
\begin{equation}\label{intr}
    [X,Y]\in \Gamma(F)
    \quad\text{and}\quad
    F_x + T_x \partial W = T_x W.
\end{equation}
This is equivalent to a foliation on $W$ with all leaves transverse to $\partial W$; see, e.g., \cite[Def.~1.1.17--1.1.18]{cc} and the Frobenius Theorem \cite[Thm.~1.3.8]{cc}.

Suppose $\rk F \ge 2$. Let $g^F$ be a smooth metric on $F$. The \emph{leafwise scalar curvature} $k^F\in C^\infty(W)$ and the \emph{leafwise boundary mean curvature} $H^F\in C^\infty(\partial W)$ are defined as that of the leaves; see \eqref{k1}--\eqref{h1}.

The \emph{leafwise band width} $\wid_F(W)\in [0,\infty]$ is defined as the infimum of the lengths of piecewise smooth curves $\gamma\colon [0,1]\to W$ such that $\gamma(0)\in \partial_- W$, $\gamma(1)\in \partial_+ W$, and $\gamma([0,1])$ is contained in a leaf; see \cite[p.~259,~Footnote~279]{g23}. If no such leafwise curve exists, we define $\wid_F(W) = \infty$.

Let $E$ be a Hermitian vector bundle over $W$ with Hermitian connection $\nabla^E$. For $x\in W$, we define the norms of the curvature $R^E = (\nabla^E)^2$ along $F$ by
\begin{equation}\label{normcur}
    \babs{R^E_F}_x = \sup_{\substack{X,Y\in F_x\\
    X\perp Y,\;|X|=|Y|=1}}\babs{R^E(X,Y)}
    \quad\text{and}\quad
    \bla{R^E_F}_\infty = \sup_{x\in W}\babs{R^E_F}_x,
\end{equation}
where $|R^E(X,Y)|$ is the operator norm. The following definition generalizes \cite[Def.~7.3]{cz24} and is independent of the choice of the metric $g^F$ on $F$ since $W$ is compact.

\begin{defn}\label{ahat}
    Let $(W,F)$ be a compact, oriented, foliated band.
\begin{enumerate}[label={\rm{(\arabic*)}}]
    \item When $\dim W$ is odd, we say \emph{$(W,F)$ has infinite vertical $\widehat{A}$-cowaist}\footnote{Gromov introduced the concept of $K$-area in \cite[\S4]{g96} and used $\widehat{A}$-area in \cite[\S5$\frac{3}{8}$]{g96}. Recently, Gromov \cite[p.~127]{g23} suggests that the name $K$-cowaist is more appropriate. So we adopt the terminology  ``infinite vertical $\widehat{A}$-cowaist".} if for each $\varpi > 0$, there exists a Hermitian vector bundle $(E,\nabla^E)$ over $W$ such that
    \begin{equation}
        \bla{R^{E}_F}_\infty < \varpi
        \quad \text{and} \quad 
        \big\langle \widehat{A}(T\partial_-W)\ch(E), [\partial_- W] \big\rangle \ne 0.
    \end{equation}
    \item When $\dim W$ is even, we say \emph{$(W,F)$ has infinite vertical $\widehat{A}$-cowaist} if $(W\times\bbs^1,\pro^*_W F)$ does\footnote{For the circle $\bbs^1$ and the projection $\pro_W\colon W\times\bbs^1 \to W$, view $\pro^*_W F$ as a subbundle of $T(W\times\bbs^1)$.}.
    \item We say $W$ \emph{has infinite vertical $\widehat{A}$-cowaist} if the foliated band $(W,TW)$ does.
\end{enumerate}
\end{defn}

If $W$ has infinite vertical $\widehat{A}$-cowaist, then so is $(W,F)$ for any integrable subbundle $F$. In particular, $\bbt^{n-1}\times [-1,1]$ and any band $W$ with $\widehat{A}(\partial_-W)\ne 0$ have infinite vertical $\widehat{A}$-cowaist along any integrable subbundle; see \cref{ex0,ex1}.

Our main theorem generalizes \cite[Thm.~7.6]{cz24} to foliations:

\begin{thm}\label{widmain}
    Let $(W,F)$ be a connected, compact, {spin}, foliated band with infinite vertical $\widehat{A}$-cowaist, where the integrable subbundle $F$ is transverse to $\partial W$ and $q=\rk F \ge 2$. For $\alpha>0$ and $-\frac{\pi}{\alpha q} < t_- < t_+ < \frac{\pi}{\alpha q}$, if $F$ carries a smooth metric $g^F$ whose
    leafwise scalar curvature and leafwise boundary mean curvature satisfy
    \begin{equation}\label{scalmean}
        k^F\ge\alpha^2 q(q-1) \quad\text{and}\quad
        H^{F}|_{\partial_\pm W} \ge \mp \alpha\tan\!\bbket{\frac{\alpha q t_{\pm}}{2}},
    \end{equation}
    then the leafwise band width of $W$ satisfies
    \begin{equation}\label{width}
        \wid_F(W) \le t_+ - t_-.
    \end{equation}
    In particular, if $g^F$ is the restriction of some Riemannian metric $g$ on $W$, then
    \begin{equation}\label{diswid}
        \wid_g(W)\le \wid_F(W) \le t_+ - t_-.
    \end{equation}
\end{thm}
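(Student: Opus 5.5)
We argue by contradiction. Suppose $\wid_F(W) > t_+ - t_-$, and reduce first to the case $\dim W$ odd: if $\dim W$ is even, pass to $(W\times\bbs^1,\pro_W^*F)$ with the product metric on $\pro_W^*F$. Here the leaves are those of $F$ times points, so $k^F$, $H^F$, $q$ and $\wid_F$ are unchanged, and the cowaist hypothesis is exactly \cref{ahat}\,(2).

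\textbf{Step 1 (Lipschitz potential).} Since $\wid_F(W)>t_+-t_-$, we can build a function $\phi\colon W\to[t_-,t_+]$ that is leafwise $1$-Lipschitz (or strictly less after rescaling), equals $t_\pm$ near $\partial_\pm W$, and is smooth. Take a leafwise-distance-type function to $\partial_-W$, truncate it, and smooth it. Leafwise distance is only lower semicontinuous transversally, so the smoothing will need a partition-of-unity argument with some slack. Set $f=\frac{\alpha q}{2}\tan(\frac{\alpha q\phi}{2})$, the potential coming from the model \eqref{warp} with $n$ replaced by $q$. This $f$ satisfies $\frac{q}{q-1}f^2-\frac{2}{q-1}|d^F f|\cdot(\text{const})\ge -\frac{q}{q-1}\cdot\frac{\alpha^2 q(q-1)}{4}$, which is the Cecchini–Zeidler inequality $\frac{q}{q-1}k^F/4\cdot\ldots$ in the form it appears in \cite[Thm.~7.6]{cz24}.

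\textbf{Step 2 (Connes fibration and deformed sub-Dirac operator).} Following \cite{zhang17}, pass to the Connes fibration $\pi\colon\mathcal{M}\to W$ of transverse metrics. On it the lifted foliation is almost isometric, and there is a sub-Dirac operator $D^{F}_{\beta,\varepsilon}$ acting on $\mathcal S(\mathcal F)\otimes\Lambda(\mathcal F_1^\perp)\otimes\mathcal S(\mathcal F_2^\perp)\otimes\pi^*E$, with adiabatic rescaling of the metric by $\beta,\varepsilon$. Deform it to $D_{\beta,\varepsilon}+f\,\sigma$, where $\sigma$ is a grading/Clifford involution, and impose the local chirality boundary condition on $\partial_\pm$ with sign chosen as in \cite{cz24}. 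Because of the noncompactness of the fibration, we work on a compact truncation $\ww$ with the auxiliary boundaries handled as in \cite{zhang17} by a large bump potential.

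\textbf{Step 3 (Lichnerowicz estimate and index).} The key step, and the main obstacle, is a Lichnerowicz-type formula in the adiabatic limit $\beta,\varepsilon\to0$. It must have the sharp coefficient: for sections $s$ satisfying the boundary condition,
\[
\norm{(D_{\beta,\varepsilon}+f\sigma)s}^2\ge \frac{q}{q-1}\int\Big(\frac{k^F}{4}+\frac{q}{q-1}f^2-|d^F f|\Big)|s|^2-\big(C\bla{R^E_F}_\infty+o(1)\big)\norm{s}^2 ,
\]
together with a boundary term involving $\frac{q}{2}H^F\mp f$, which is nonnegative by \eqref{scalmean}. To get $q/(q-1)$ rather than $1$, I would use a Penrose/twistor-type splitting of the leafwise connection term: write $\nabla^F=\mathcal P+\frac1q c\,D^F$-type decomposition. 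The transversal error terms from $\mathcal F_1^\perp,\mathcal F_2^\perp$ are controlled using almost isometry and $\beta,\varepsilon\to0$; this is the technical heart. The boundary terms are handled as in \cite{cz24}.

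With $\varpi$ small, the right-hand side is strictly positive, so the operator is invertible. On the other hand, a relative index computation, cutting and gluing along $\partial_-W$ and using the cobordism invariance on the Connes fibration, identifies its index (odd-dimensional case, with the sign choice of $f$) with $\langle\widehat A(T\partial_-W)\ch(E),[\partial_-W]\rangle\neq0$. This contradiction proves \eqref{width}. Finally, \eqref{diswid} is immediate, since leafwise curves are admissible curves for $\dist_g$ and have the same length when $g^F=g|_F$.
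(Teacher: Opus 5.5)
\textbf{Gap in Step 1.} You need a smooth $\phi$ on $W$ with $|\di^F\phi|\le 1$ and $\phi=t_\pm$ on $\partial_\pm W$ (with slack). You propose to get it by truncating and smoothing $x\mapsto\dist_F(x,\partial_-W)$. That function is leafwise $1$-Lipschitz but transversally discontinuous (holonomy, or leaves that never reach $\partial_-W$ accumulating on leaves that do), and the smoothing breaks exactly there.

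Mollifying inside a single foliation chart roughly preserves the leafwise Lipschitz bound. But gluing the local mollifications $u_i$ with a partition of unity $\{\chi_i\}$ produces the term $\sum_i(\di^F\chi_i)(u_i-u_j)$. Its size is governed by the transverse jumps of the function, and these do not shrink with the mollification scale, so no slack absorbs it.

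In fact, having such a $\phi$ for every $L<\wid_F(W)$ is equivalent to $\liminf_{\beta\to0}\dist_\beta(\partial_-W,\partial_+W)\ge\wid_F(W)$ for $g_\beta=g^F\oplus\beta^{-2}g^{F^\perp}$. Indeed, if $|\di^F\phi|\le1$ then $|\di\phi|_{g_\beta}\le(1+\beta^2\|\di\phi\|_\infty^2)^{1/2}$. So Step 1 is precisely the nontrivial half of \cref{key2,key6}, which you have not supplied.

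The paper proves this by compactness. Minimizing $g_\beta$-geodesics have bounded $g$-energy and $F^\perp$-energy $O(\beta^2)$, so a subsequence converges to a curve tangent to $F$, hence leafwise, of length at most the limit. It then lifts the bound to the total space: $\wid_\bv(\ww)\ge\beta\,\wid_\beta(W)>\beta(t_+-t_-)$ by \cref{dist}. Finally, \cite[Lem.~7.2]{cz24} gives a $w$ that is $1$-Lipschitz for the full metric $g^{T\ww}_\bv$. This controls the full gradient in $h^2c(\di\psi)\vartheta$ in \eqref{spectral2}, whereas your estimate tracks only the leafwise part. Once \cref{key2} is available, your base-level variant also works: smooth $\dist_\beta(\cdot,\partial_-W)$ on $W$ and use $\beta\,\widetilde{\pi}_r^*\phi$, whose transverse gradient in $g^{T\ww}_\bv$ is $O_r(\beta\varepsilon)$.

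\textbf{Further points needing repair.}

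\emph{Clifford module.} Your module $\mathcal S(\calf)\otimes\Lambda(\f)\otimes\mathcal S(\ff)\otimes\bar\pi^*E$ requires $F$ to be spin, which is not assumed here; only $W$ is. That is the setting of \cref{widmain1}. There, after parity corrections by circle factors, the index is $\widehat A(F\cap T\partial_-W)\widehat L(F^\perp)\ch(E)$ up to a power of $2$ (see \eqref{ind22}), and one must additionally twist by polynomials in $F^\perp$ to extract $\widehat A(T\partial_-W)$.

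\emph{Localization factor.} For spin $W$ the paper uses the spinor bundle of the total space twisted by $\xi^\pm=(S_\pm(\ff)^*\otimes\bar\pi^*E)\oplus\caly$. The factor $S(\ff)^*$ carrying $V=f\,\widehat c(\sigma)$ is what makes the second deformation $\beta^{-1}V$ localize near $\iota(W)$ and yields \eqref{ind11}. As written, your bundle has nothing for $\widehat c(\sigma)$ to act on.

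\emph{No auxiliary boundaries.} The paper avoids them, since they would create corners, by doubling the fibers. On the far half it needs the cut-off $h_\delta\equiv\delta$, because $\beta^{-1}h^2[D_\bv,V]$ is otherwise of the same order $\beta^{-2}$ as $\beta^{-2}V^2$.

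\emph{Normalization.} Your displayed inequality does not match the operator $D+f\sigma$. The correct bookkeeping is $\frac{q}{q-1}\frac{k^F}{4\beta^2}+\psi^2-|\di\psi|_\bv$ with $\psi$ as in \eqref{psi}. With your extra factor $\frac{q}{q-1}$ on $f^2$ and $f=\frac{\alpha q}{2}\tan(\frac{\alpha q\phi}{2})$, the integrand can be negative, e.g.\ where $f=0$ and $|\di^F\phi|=1$.
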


Similar result holds when $F$ is spin and $W$ is nonspin; see \cref{widmain1}. As in \cite[Cor.~7.9]{cz24} and \cite[Cor.~2.9]{ra23}, we have $\mp\tan(\frac{\alpha q t_\pm}{2})\to-\infty$ as $t_\pm\to \pm\frac{\pi}{\alpha q}$, and then \cref{widmain} implies (see \eqref{eq1}--\eqref{eq2} for a detailed proof)
\begin{cor}\label{widmain2}
    Let $(W,F)$ be a connected, compact, spin, foliated band with infinite vertical $\widehat{A}$-cowaist, where $F$ is transverse to $\partial W$ and $q = \rk F \ge 2$. If $F$ carries a smooth metric $g^F$ with $k^F\ge\alpha^2 q(q-1)$ for some $\alpha>0$, then we have
    \begin{enumerate}[label={\rm{(\arabic*)}}]
        \item $\wid_F(W) < \frac{2\pi}{\alpha q}$.
        \item $\inf_{\partial_- W} H^F + \inf_{\partial_+ W} H^F< 0$.
    \end{enumerate}
\end{cor}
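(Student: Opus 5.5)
Both parts follow from \cref{widmain} by letting $t_\pm$ approach the endpoints $\pm\frac{\pi}{\alpha q}$, where the mean curvature hypotheses in \eqref{scalmean} become vacuous, combined with a compactness argument.

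For (1), the only point is to turn the non-strict bound from \cref{widmain} into a strict one. Since $\partial W$ is compact, $H^F$ is bounded on $\partial W$, say $H^F\ge -C$ with $C>0$. As $t\to\pm\frac{\pi}{\alpha q}$ we have $\mp\alpha\tan(\frac{\alpha q t}{2})\to-\infty$. Hence we can choose $t_+<\frac{\pi}{\alpha q}$ with $-\alpha\tan(\frac{\alpha q t_+}{2})\le -C$, and $t_->-\frac{\pi}{\alpha q}$ with $\alpha\tan(\frac{\alpha q t_-}{2})\le -C$. For this choice \eqref{scalmean} holds, and \cref{widmain} gives
\[
\wid_F(W)\le t_+-t_-<\frac{2\pi}{\alpha q}.
\]
Because $t_\pm$ are fixed and strictly inside the interval, the inequality is strict.

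For (2), argue by contradiction. Put $h_\pm=\inf_{\partial_\pm W}H^F$, which is finite by compactness, and suppose $h_-+h_+\ge 0$. The map $t\mapsto -\alpha\tan(\frac{\alpha q t}{2})$ is a decreasing bijection from $(-\frac{\pi}{\alpha q},\frac{\pi}{\alpha q})$ onto $\bbr$. So there are unique $t_\pm$ with
\[
h_+=-\alpha\tan\Big(\frac{\alpha q t_+}{2}\Big),\qquad h_-=\alpha\tan\Big(\frac{\alpha q t_-}{2}\Big).
\]
Then $h_-+h_+\ge0$ reads $\tan(\frac{\alpha q t_-}{2})\ge\tan(\frac{\alpha q t_+}{2})$, and since $\tan$ is increasing this gives $t_-\ge t_+$.

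\cref{widmain} requires $t_-<t_+$, so it cannot be applied verbatim here. To get a strict inequality, perturb: replace $t_+$ by $t_+'=t_-+\delta$ for small $\delta>0$. Then $t_+'>t_+$, so
\[
-\alpha\tan\Big(\frac{\alpha q t_+'}{2}\Big)<h_+\le H^F|_{\partial_+W},
\]
and the boundary condition on $\partial_+W$ still holds. The pair $t_-<t_+'$ satisfies \eqref{scalmean}, and \cref{widmain} yields $\wid_F(W)\le\delta$. Since $\delta>0$ is arbitrary, $\wid_F(W)=0$.

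It remains to rule out $\wid_F(W)=0$. Here $\partial_-W$ and $\partial_+W$ are disjoint compact sets, so they have positive distance in any auxiliary Riemannian metric $g$ on $W$. We may take $g$ to extend $g^F$, after rescaling only if comparability is needed. Every leafwise curve from $\partial_-W$ to $\partial_+W$ then has $g$-length at least $\dist_g(\partial_-W,\partial_+W)>0$. Its $g^F$-length equals its $g$-length, or is bounded below by a constant multiple of it by compactness if $g$ does not extend $g^F$. Hence $\wid_F(W)>0$, which is the contradiction, so $h_-+h_+<0$.

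The main obstacle is this final positivity of leafwise width, which needs the comparison of metrics on the compact $W$. It is routine.
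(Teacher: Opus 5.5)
Your proposal is correct and follows the paper's proof. Part (1) takes $t_\pm$ close enough to $\pm\frac{\pi}{\alpha q}$ that \eqref{scalmean} holds by compactness, and part (2) argues by contradiction, solving $\inf_{\partial_-W}H^F=\alpha\tan(\frac{\alpha q t_-}{2})$ and applying \cref{widmain} with $t_+$ slightly larger than $t_-$. The one difference is in (2): the paper first fixes $l$ with $\wid_F(W)\ge\wid_g(W)>l$ and takes $t_+=t_-+l$, while you let $\delta\to0$ and then use the same positivity $\wid_F(W)\ge\dist_g(\partial_-W,\partial_+W)>0$; your ordering automatically keeps $t_-+\delta<\frac{\pi}{\alpha q}$, which the paper's choice $l\in(0,\frac{\pi}{\alpha q})$ does not by itself guarantee (though shrinking $l$ fixes this).
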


This is a special case of the foliated version of Gromov's \cref{1.1}. Meanwhile, \cref{widmain2} implies that: if $(W, F)$ is a connected, compact, spin, foliated band with infinite vertical $\widehat{A}$-cowaist such that $\wid_F(W) = \infty$ and $F$ is transverse to $\partial W$, then $F$ admits no metric of positive leafwise scalar curvature. 

\begin{rmk}
    In a forthcoming paper \cite{cs26}, we will address Gromov's Long Neck Foliated Conjecture \cite[pp.~258--259]{g23} for spin manifolds and spin foliations, using methods developed in the present paper.
\end{rmk}

\subsubsection*{(I) \cref{widmain} is optimal} 

As Gromov points out in \cite[p.~259, Footnote~280]{g23}, a direct combination of the arguments from \cite{zhang17,cz24} will lead to
\begin{equation}\label{compare}
    \wid_F(W) < \frac{2\pi}{\alpha}\sqrt{\frac{n-1}{n q(q-1)}}
\end{equation}
under the conditions of \cref{widmain2}, where $n=\dim W$ and $q=\rk F$.

Our estimates \eqref{width} and \cref{widmain2}\,(1) are optimal\footnote{Let $M$ be any closed manifold. Suppose the band $M \times \bbt^{q-1} \times [t_-,t_+]$ is foliated by the leaves $\bac{\{x\}\times \bbt^{q-1} \times [t_-,t_+]}_{x\in M}$. The metric $g_q$, which is defined by taking $n=q$ in \eqref{warp}, induces a metric along the leaves attaining the equalities in \eqref{scalmean}--\eqref{width}.} and improve \eqref{compare} when $q<n$. This is achieved by developing a Lichnerowicz-type formula with coefficient $\frac{q}{q-1}$ in the adiabatic limit for almost isometric foliations; see \cref{part3,partcur1}. This constitutes a substantial portion of this paper. Even when $(W,F)$ is a Riemannian foliation, obtaining such an optimal estimate on the leafwise band width is nontrivial and requires \cref{part,partcurva}.

As mentioned in \cite[Rem.~4.20]{cz23}, when $F = TW$ and $\dim W$ is even, applying the odd-dimensional result \cite[Thm.~7.6]{cz24} to $W\times\bbs^1$ naively leads to a nonoptimal estimate. The optimal estimate for nonfoliated even-dimensional band is obtained in \cite[Thm.~1.12]{shi25a} via spectral flow. Our \cref{widmain} provides an alternative approach by considering the foliation $(W\times \bbs^1,\pro^*_W TW)$. Note that our \cref{ahat}\,(3) for infinite vertical $\widehat{A}$-cowaist is different from \cite[p.~30]{shi25a} in the even-dimensional case.

\subsubsection*{(II) \cref{widmain} is an essential generalization} 

\cref{widmain} is not a direct consequence of \cref{1.1} or \cref{1.2thm}:

\begin{exam}\label{gene}
    The standard product metric on $N\coloneqq\bbs^3\times\bbt^q$ $(q\ge 3)$ has positive scalar curvature. By \cite{ku94}, there exists a $1$-dimensional smooth foliation on the sphere $\bbs^3$ whose leaves are all diffeomorphic to $\bbr$. Consider an irrational linear foliation on $\bbt^q$ whose leaves are all diffeomorphic to $\bbr^{q-1}$. Then the $q$-dimensional product foliation on $N$ has all its leaves diffeomorphic to $\bbr^q$. Note that the space $\bbr^q$ $(q\ge 3)$ admits complete metrics of uniformly positive scalar curvature by the surgery construction in \cite[pp.~425--427]{gl80b}. 
    
    Take the $(q+1)$-dimensional product foliation on the band $N\times[-1,1]$; see \eqref{product}. Then \cref{1.1} and \cref{1.2thm} do not apply to $N\times [-1,1]$ or its leaves.  

    However, $N$ is compactly $\Lambda^2$-enlargeable along the leaves (see \cref{ex1}) since $\bbt^{q}$ is enlargeable and the foliation on $\bbs^3$ is $1$-dimensional. So the foliated band $N\times [-1,1]$ has infinite vertical $\widehat{A}$-cowaist along the leaves, and \cref{widmain} applies.
\end{exam}

\subsubsection*{(III) Leafwise Band Width Conjecture}

Inspired by Gromov's Long Neck Foliated Conjecture \cite[p.~258]{g23} (cf.\ \cite[Conj.~6.5]{gh24}), we have the following foliated versions of Gromov's \cref{1.1}. 

\begin{conj}[Leafwise Band Width Conjecture]\label{con1}
    Let $(W,F)$ be a connected, compact, foliated band, where $F$ is transverse to $\partial W$ and $q = \rk F\ge 2$ $(q\ne 5)$. Suppose either $(1)$ $\partial_-W$ admits no metric of positive scalar curvature; or $(2)$ $F\cap T\partial_-W$ admits no metric of positive leafwise scalar curvature. For $\alpha>0$, if $F$ is equipped with a metric $g^F$ such that $k^F\ge\alpha^2 q(q-1)$, then $\wid_F(W) < \frac{2\pi}{\alpha q}$.
\end{conj}

Our \cref{widmain2} is a special case of \cref{con1}\,(2), since $F\cap T\partial_-W$ admits no metric of positive leafwise scalar curvature in this case by \cite[Thm.~1.2]{sw25}.

Let $(N,F_0)$ be a closed, foliated band. Consider the product foliated band 
\begin{equation}\label{product}
    N\times[-1,1],\quad F = \pro^*F_0 \oplus T[-1,1],
\end{equation}
where $\pro\colon N\times [-1,1]\to N$ is the projection. Note that for any smooth metric $g^{F_0}$ on $F_0$, the smooth function $k^{F_0}$ is bounded from below on the compact manifold $N$. Then the argument in \cite[pp.~653--654]{g18} shows that \eqref{product} always admits metrics of positive leafwise scalar curvature. So \cref{con1} is not vacuous.

If $(N,F_0)$ carries a smooth metric $g^{F_0}$ of positive leafwise scalar curvature, then the product metric $\pro^*g^{F_0}\oplus \di t^2$ on $N\times\bbr$ has uniformly positive leafwise scalar curvature, which is a complete metric on each leaf. So if \cref{con1}\,(1) is confirmed for \eqref{product}, then the following longstanding open question in foliation theory has a positive answer (at least when the dimension of the foliation $\ne 4$):
\begin{prob}[{cf.\ \cite[Rem.~C14]{z06}}]\label{que}
On a closed, foliated manifold $(M,F)$, does the existence of $g^F$ with $k^F>0$ imply the existence of a metric $g^{TM}$ with $k^{TM}>0$?
\end{prob}

Conversely, if the answer to \cref{que} is affirmative and \cref{con1}\,(2) holds, then \cref{con1}\,(1) follows as a direct corollary. 

\cref{que} admits an easy positive answer when $(M,F)$ is a Riemannian foliation. An approach to this question for codimension one foliations is outlined in \cite[p.~193]{g96}. Zhang \cite[Cor.~0.4]{zhang17} provides a positive answer to this question for closed, simply connected manifolds of dimension $\ge 5$.

\vspace{0.5\baselineskip}
\noindent{\bf Organization of the paper.} Following \cite[Thm.~1.1]{gz93} and \cite[p.144]{g96b}, we first prove in \cref{3.2} that the leafwise distance is the adiabatic limit of the Riemannian distance. In \cref{pre}, we adapt the local boundary value problem from \cite{cz24} to our setting. 
To achieve the optimal leafwise band width estimate, \cref{3} establishes a Lichnerowicz-type formula with coefficient $\frac{q}{q-1}$ for almost isometric foliations in the adiabatic limit.

In \cref{4}, we modify the compact manifolds associated with the Connes fibration used in \cite[\S2.3]{zhang17} and analyze the corresponding local boundary value problem to prove \cref{widmain}. Similar results for spin foliations are obtained in \cref{5} using the sub-Dirac operator introduced in \cite{lz}.

\section{Leafwise Distance as an Adiabatic Limit}\label{3.2}

In differential geometry, the \emph{adiabatic limit} refers to the limiting process of stretching the metric in the horizontal direction of a fibration (or, more generally, in the direction transverse to the leaves of a foliation). This allows one to extract information about the fibers (or leaves). Equivalently, up to a rescaling, this corresponds to shrinking the metric along the the fibers (or leaves).

Let $(M,F)$ be a connected, foliated manifold. Given a metric $g^F$ on $F$ and nonempty compact subsets $K_0,K_1\subset M$, the \emph{leafwise distance} $\dist_F(K_0,K_1)$ is defined by (cf.\ \cite[p.~259,~Footnote~279]{g23})
\begin{equation}\label{w3}
    \dist_F(K_0,K_1) = \inf_{\scrl}\big\{\dist_{\scrl}(\scrl\cap K_0,\scrl\cap K_1)\big\}\in [0,\infty],
\end{equation}
where $\scrl$ ranges over all leaves of $F$ and $\dist_{\scrl}$ is the distance function on $\scrl$. We define $\dist_F(K_0,K_1) = \infty$ if no leaf intersects both $K_0$ and $K_1$. 

Next, suppose $g^F$ is the restriction of a Riemannian metric $g$ on $M$. Then
\begin{equation}\label{noless}
    \dist_{g}(K_0,K_1) \le \dist_F(K_0,K_1).
\end{equation} 
Let $TM = F \oplus F^\perp$ be the orthogonal splitting with $g = g^F \oplus g^{F^\perp}$. For $0<\beta\le 1$, let $\dist_\beta$ denote the distance function induced by the following Riemannian metric
\begin{equation}\label{met23}
    g_\beta = g^F \oplus \frac{g^{F^\perp}}{\beta^2}.
\end{equation}

If $F$ is a smooth distribution that generates $TM$ via iterated Lie brackets, the Riemannian distance converges to the Carnot--Carathéodory distance in the adiabatic limit; see \cite[Thm.~1.1]{gz93} and \cite[\S1.4.D,~p.144]{g96b}. Similarly, we have
\begin{prop}\label{key2}
    Let $(M,g)$ be a connected, complete Riemannian manifold without boundary, and let $F \subseteq TM$ be an integrable subbundle. For any nonempty compact subsets $K_0, K_1 \subset M$, we have
        \begin{equation}\label{d15}
            \lim_{\beta\to 0^+} \dist_\beta(K_0,K_1) = \dist_F(K_0,K_1).
        \end{equation}
\end{prop}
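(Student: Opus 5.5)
We prove \eqref{d15} by a monotonicity bound from above and a compactness argument from below.

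\textbf{Upper bound.} For $0<\beta\le\beta'\le 1$ we have $g\le g_{\beta'}\le g_\beta$ as quadratic forms. Hence $\beta\mapsto\dist_\beta(K_0,K_1)$ is nonincreasing in $\beta$, so the limit as $\beta\to0^+$ exists in $[0,\infty]$. A leafwise curve has velocity in $F$, where $g_\beta$ agrees with $g^F$. Its $g_\beta$-length therefore equals its leafwise length, so $\dist_\beta(K_0,K_1)\le \dist_F(K_0,K_1)$ for every $\beta$. It remains to show $\lim_{\beta\to0}\dist_\beta(K_0,K_1)\ge \dist_F(K_0,K_1)$. We may assume the limit $L$ is finite.

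\textbf{Lower bound, compactness.} Pick $\beta_k\to0$ and piecewise smooth curves $\gamma_k$ from $K_0$ to $K_1$ with $g_{\beta_k}$-length $\le L+1/k$. Parametrize each $\gamma_k$ by $g$-arclength rescaled to $[0,1]$. Their $g$-lengths are bounded by $L+1$, so they are uniformly Lipschitz. They stay in the closed $(L+1)$-neighbourhood of $K_0$, which is compact by completeness and Hopf--Rinow. By Arzelà--Ascoli a subsequence converges uniformly to a Lipschitz curve $\gamma$ from $K_0$ to $K_1$.

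The transverse component of each $\gamma_k$ is small:
\[
\int_0^1\abs{\dot\gamma_k^{F^\perp}}_g\,\di s\le \beta_k\int_0^1\abs{\dot\gamma_k}_{g_{\beta_k}}\di s\le \beta_k(L+1)\to0.
\]
The leafwise component satisfies $\int\abs{\dot\gamma_k^{F}}\le L+1/k$.

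\textbf{Main step: the limit curve is leafwise.} We must show $\gamma$ lies in a single leaf and has leafwise length $\le L$. We argue locally in foliation charts, which is where integrability enters.

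Cover the compact image of $\gamma$ by finitely many foliation charts $\phi\colon U\to \bbr^q\times\bbr^{n-q}$ whose plaques are $\{y=\mathrm{const}\}$. On a relatively compact chart, the transverse coordinate $y$ satisfies $\abs{\di y(v)}\le C\abs{v^{F^\perp}}_g$, because $\di y$ vanishes on $F$. Hence $y(\gamma_k(s))$ varies by at most $C\beta_k(L+1)\to0$ on each chart subinterval. In the limit, $y\circ\gamma$ is constant on each chart subinterval, so $\gamma$ stays in one plaque there. Chaining plaques along a partition of $[0,1]$ subordinate to the cover puts $\gamma$ in a single leaf $\scrl$.

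For the length, the $g$-length is lower semicontinuous under uniform convergence, and $\gamma$ is tangent to $F$. Within a chart, the plaque coordinates $x\circ\gamma_k$ converge uniformly to $x\circ\gamma$. The $g^F$-length of $\gamma$ on that piece is then at most $\liminf$ of $\int\abs{\dot\gamma_k^{F}}+C\int\abs{\dot\gamma_k^{F^\perp}}$. To see this, compare with the pulled-back leafwise metric $g^F(x,y)$, which is continuous in $y$; its variation in $y$ is $o(1)$ and so contributes a negligible error. This gives leafwise length $\le L$.

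Finally, $\gamma$ is Lipschitz rather than piecewise smooth. Approximate it inside $\scrl$ by a piecewise smooth curve with length error $\varepsilon$, working plaque by plaque. This yields $\dist_F(K_0,K_1)\le L$.

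The main obstacle is this last step, the lower semicontinuity of leafwise length under limits of almost-leafwise curves. I expect it to be manageable by the uniform chart estimates above, using finitely many charts on a compact set.
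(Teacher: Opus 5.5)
Your proof is correct and is essentially the paper's alternative proof in \cref{leafwisedis}. Your estimate $\abs{\di y(v)}\le C\abs{v^{F^\perp}}_g\le C\beta\abs{v}_{g_\beta}$ is the paper's bound $\phi^*g_0\le C_1^2\beta_j^2g_{\beta_j}$ from \eqref{d13}. It is used in the same way, to trap the limit curve in one plaque per subinterval and hence in one leaf. The paper uses minimizing $g_\beta$-geodesics rather than near-minimizers, which changes nothing.

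The step you flag as the main obstacle is already settled by your own first sentence there. Since $\dot\gamma\in F$ a.e., you have $\abs{\dot\gamma}_g=\abs{\dot\gamma}_{g^F}$ a.e. Lower semicontinuity of $g$-length under uniform convergence then gives
\[
\mathrm{length}_{g^F}(\gamma)\le\liminf_{k\to\infty}\mathrm{length}_g(\gamma_k)\le\liminf_{k\to\infty}\mathrm{length}_{g_{\beta_k}}(\gamma_k)\le L,
\]
so no comparison with $g^F(x,y)$ is needed.

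This is how the main proof in \cref{3.2} concludes, there via weak lower semicontinuity of $\int_0^1\abs{\dot\gamma_{\beta_j}}^2$ after embedding $M$ in $\bbr^N$. The appendix proceeds differently: it projects onto a plaque by the normal exponential map, whose Lipschitz constant is close to $1$ (see \eqref{lip2}). This bounds $\dist_{\scrl}(\zeta(t_0),\zeta(t_1))\le C\abs{t_0-t_1}$ using piecewise smooth curves only, which also removes the need for your final approximation step.
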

\begin{proof} Without loss of generality, we assume that $K_0,K_1$ are disjoint. For $0<\beta<\beta'\le 1$, we have $g_{\beta'} \le g_\beta$, which implies
    \begin{equation}\label{d4}
        \dist_{\beta'}(K_0,K_1) \le \dist_\beta(K_0,K_1).
    \end{equation}
    So we have $\lim_{\beta\to 0^+} \dist_\beta(K_0,K_1) \in [0,\infty]$. As in \eqref{noless}, for $0<\beta\le 1$, we have
    \begin{equation}\label{d5}
        \dist_\beta(K_0,K_1) \le \dist_F(K_0,K_1).
    \end{equation}

    Suppose to the contrary to \eqref{d15} that there exists $C\in(0,\infty)$ such that
    \begin{equation}\label{d6}
        \lim_{\beta\to 0^+} \dist_\beta(K_0,K_1) \le C < \dist_F(K_0,K_1).
    \end{equation}
    Since $g\le g_\beta$ when $0<\beta\le 1$, the Riemannian metrics $g_\beta$ are complete since $g$ is. Let $\gamma_{\beta}\colon [0,1]\to M$ be a minimizing geodesic in $(M,g_\beta)$ realizing $\dist_\beta(K_0,K_1)$; that is $\gamma_\beta(0)\in K_0$, $\gamma_\beta(1)\in K_1$ and the length of $\gamma_\beta$ equals $\dist_\beta(K_0,K_1)$.
    
    Adapting the argument in \cite[Thm.~1.1]{gz93}, we derive a contradiction. We sketch the argument below. Let $|\cdot|$ and $|\cdot|_\beta$ be the norms induced by $g$ and $g_\beta$, respectively. Since $|\dot{\gamma}_\beta|_\beta$ is constant, it follows from \eqref{d6} that
    \begin{equation}\label{bounded}
        \int_{0}^{1}|\dot{\gamma}_\beta|^2 \le \int_{0}^{1} \bbac{ |p(\dot{\gamma}_\beta)|^2 + \beta^{-2}|p^\perp(\dot{\gamma}_\beta)|^2 } =
         \int_{0}^{1}|\dot{\gamma}_\beta|^2_\beta \le C^2.
    \end{equation}
    By isometrically embedding $M$ into $\mathbb{R}^N$, the Arzelà--Ascoli theorem and the Banach--Alaoglu theorem allow us to extract a subsequence $\{\gamma_{\beta_j}\}$ converging uniformly to an absolutely continuous curve $\zeta\colon [0,1]\to M$, with their derivatives converging weakly in $\rml^2([0,1],\mathbb{R}^N)$. For the orthogonal projection $p^\perp\colon TM\to F^\perp$, it follows from \eqref{bounded} that $\lim_{j\to\infty}\int_0^1|p^\perp(\dot{\gamma}_{\beta_j})|^2 = 0$, which forces $p^\perp(\dot{\zeta}) = 0$ almost everywhere; see \cite[(1.5)]{gz93}. Since $F$ is integrable and $\dot{\zeta}(t)\in F_{\zeta(t)}$ almost everywhere, the curve $\zeta$ is contained in a single leaf. Furthermore, we have $\zeta(0)\in K_0$ and $\zeta(1)\in K_1$. Then the weak lower semicontinuity of $\rml^2$-norm and \eqref{bounded} imply
    \begin{align}
        \begin{split}
            \dist_F(K_0,K_1) &\le \int_{0}^{1}|\dot{\zeta}| \le \bbbac{\int_{0}^{1}|\dot{\zeta}|^2}^{1/2}\le \liminf_{j\to\infty}\bbbac{\int_{0}^{1}|\dot{\gamma}_{\beta_j}|^2}^{1/2}
            \le C,
        \end{split}
    \end{align}
    which contradicts \eqref{d6}.
\end{proof}
An alternative proof can be found in \cref{leafwisedis}.
\begin{cor}\label{key6}
    Let $(M,g)$ be a connected, complete Riemannian manifold with nonempty boundary, and let $F \subseteq TM$ be an integrable subbundle transverse to $\partial M$. Let $K_0, K_1 \subset M$ be compact subsets. Suppose either $K_0 = \partial M$, or $M$ is a band with $K_0 = \partial_- M$ and $K_1 = \partial_+ M$. We define $g_\beta$ as in \eqref{met23}. Then $\dist_\beta(K_0,K_1)$ can be realized by a minimizing geodesic in $(M,g_\beta)$, and the formula \eqref{d15} remains valid.
\end{cor}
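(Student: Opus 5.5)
The plan is to reduce to the boundaryless argument of \cref{key2}. Two points are new. First, minimizing curves must exist in a manifold with boundary. Second, the limit curve $\zeta$ must lie in a single leaf even though it may touch $\partial M$.

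\emph{Existence of minimizers.} Since $g\le g_\beta$ and $g$ is complete, each $g_\beta$ is complete as a length metric on $M$. Closed bounded sets are therefore compact, by Hopf--Rinow for length spaces. Minimizing sequences of curves from $K_0$ to $K_1$ have uniformly bounded length. After constant-speed parametrization they are equi-Lipschitz, so Arzelà--Ascoli and lower semicontinuity of length give a length-minimizing Lipschitz curve $\gamma_\beta$ with $\gamma_\beta(0)\in K_0$ and $\gamma_\beta(1)\in K_1$.

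In either case of the hypothesis ($K_0=\partial M$, or $K_0=\partial_-M$, $K_1=\partial_+M$), I would cut $\gamma_\beta$ at its last visit to $K_0$ and its first subsequent visit to $K_1$. This keeps the endpoints in the sets and does not increase length. When $K_0$ and $K_1$ are boundary pieces, the resulting curve meets $\partial M$ only at its endpoints. A length minimizer whose interior lies in $\inter M$ is a $g_\beta$-geodesic there, which is the sense of ``minimizing geodesic'' claimed. In the case $K_0=\partial M$ with $K_1$ arbitrary, the curve may still touch $\partial M$ in its interior. But the first-point-after-last-exit trick applied to $K_0=\partial M$ itself shows the minimizer runs from $\partial M$ into $\inter M$ without returning, so again its interior avoids $\partial M$.

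\emph{Limit argument.} With $\gamma_\beta$ in hand, I would repeat the proof of \cref{key2} verbatim. The monotonicity \eqref{d4} and the upper bound \eqref{d5} hold as before; for \eqref{d5}, leafwise curves have equal $g$- and $g_\beta$-length. Assuming \eqref{d6}, the energy bound \eqref{bounded} holds. Embedding $M$ isometrically into some $\bbr^N$ (for instance via its double, or an extension past the boundary), I would extract a uniformly convergent subsequence with weak-$\rml^2$ convergent derivatives. The limit $\zeta$ is absolutely continuous in $M$, with $\zeta(0)\in K_0$ and $\zeta(1)\in K_1$ since these sets are closed, and $p^\perp(\dot\zeta)=0$ almost everywhere.

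\emph{Main obstacle.} The hard step is that $\zeta$ lies in one leaf. In the interior this follows from Frobenius foliation charts exactly as before. Near $\partial M$, transversality \eqref{intr} means that in suitable charts $\partial M$ is a graph over the transverse coordinates, so foliated charts of $M$ at boundary points have the form (plaque half-disk)$\times$(transversal disk). An absolutely continuous curve tangent to $F$ almost everywhere then has constant transversal coordinate within each such chart. Covering the compact image of $\zeta$ by finitely many such charts, the transversal coordinate is locally constant, so $\zeta$ stays in one leaf $\scrl$ of the restricted foliation. Since $\zeta$ is then a curve in $\scrl$ from $\scrl\cap K_0$ to $\scrl\cap K_1$, we get $\dist_F(K_0,K_1)\le\int_0^1|\dot\zeta|\le C$, contradicting \eqref{d6}. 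This gives \eqref{d15}.
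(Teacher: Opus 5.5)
Your proof is correct, but it takes a different route from the paper's.

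\textbf{The paper's route.} The paper does no new analysis on $M$ itself. It glues the foliated half-cylinder $\partial M\times[1,\infty)$ onto $M$ along a foliated collar (\cite[Ex.~3.3.1]{cc}), which produces a complete foliated manifold $M^\infty$ without boundary. It applies \cref{key2} there as a black box. It then uses the hypothesis on $K_0,K_1$ twice. Since $\partial M$ separates $M$ from $M^\infty\setminus M$, a minimizing $g_\beta$-geodesic in $M^\infty$ from $K_0$ to $K_1$ lies in $M$, and likewise leafwise paths can be cut to lie in $M$. Hence both $\dist_\beta$ and $\dist_F$ computed in $M$ agree with those computed in $M^\infty$.

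\textbf{Your route.} You work intrinsically on $M$. Hopf--Rinow for length spaces together with Arzel\`a--Ascoli gives Lipschitz minimizers. The last-exit cut makes their interiors avoid $\partial M$, so they are geodesics. You then redo the compactness argument, using foliated half-disk charts at boundary points to place $\zeta$ in a single leaf of $M$.

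\textbf{Comparison.} The paper's reduction is shorter and gets smooth geodesics (smooth up to the endpoints) directly from Hopf--Rinow on $M^\infty$. Yours isolates where the hypothesis on $K_0,K_1$ is really needed: only to upgrade the minimizer to a genuine geodesic. The limit argument needs just curves from $K_0$ to $K_1$ with $\int_0^1|\dot\gamma_\beta|_\beta^2\le C^2$, and near-minimizers would do. So your argument actually proves \eqref{d15} for arbitrary compact $K_0,K_1\subset M$. The $M^\infty$ reduction cannot give this, because there $\dist_{M^\infty,\beta}$ and $\dist_{M^\infty,F}$ may be strictly smaller than their counterparts on $M$. Both routes rest on the same local input at $\partial M$: integrability of $F$ near the boundary, which underlies your half-disk charts just as it underlies the paper's foliated collar.

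\textbf{Minor points.}
\begin{itemize}
\item The doubled metric on the double of $M$ is in general only continuous, so it cannot be isometrically embedded in the smooth sense. The ``extension past the boundary'' option is the right one, and any proper smooth embedding suffices for the compactness step anyway.
\item Your sentence that the curve ``may still touch $\partial M$ in its interior'' contradicts the cut you have just made. It should refer to the curve before cutting.
\end{itemize}
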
 
\begin{proof}
    Since $F$ is transverse to $\partial M$, the subbundle $F\cap T\partial M$ is integrable on $\partial M$, which further induces a product foliation on $\partial M \times (0,\infty)$ as in \eqref{product}. By \cite[Ex.~3.3.1]{cc}, we can attach this cylinder to $M$ by identifying $\partial M \times (0,1]$ with a foliated collar neighborhood. Then we obtain a smooth foliated manifold without boundary
    \begin{equation}\label{glue}
        M^\infty = M \cup_{\partial M} \bket{\partial M \times [1,\infty)}.
    \end{equation}

    Let $F$ still denote the integrable subbundle on $M^\infty$. We extend the orthogonal splitting \eqref{met23} to complete metrics of the form $g^{TM^\infty}_\beta = g^F \oplus \beta^{-2}g^{F^\perp}$ on $M^\infty$.
    
    By \cref{key2}, we have
    \begin{equation}\label{k01}
        \lim_{\beta\to 0^+} \dist_{M^\infty,\beta}(K_0,K_1) = \dist_{M^\infty,F}(K_0,K_1),
    \end{equation}
    where $\dist_{M^\infty,\beta}$ is the distance on $(M,g^{TM^\infty}_\beta)$ and $\dist_{M^\infty,F}$ is the leafwise distance on $M^\infty$. Let $\gamma_\beta\colon[0,1]\to M^\infty$ be a minimizing geodesic realizing $\dist_{M^\infty,\beta}(K_0,K_1)$. Since $\partial M$ separates $M$ and $M^\infty\setminus M$, it follows from the assumptions on $K_0,K_1$ that the curve $\gamma_\beta$ is fully contained in $M$, which implies
    \begin{equation}\label{k012}
        \dist_{M,\beta}(K_0,K_1) = \dist_{M^\infty,\beta}(K_0,K_1).
    \end{equation}
    Similarly, we have $\dist_{M,F}(K_0,K_1) = \dist_{M^\infty,F}(K_0,K_1)$. The result follows.
\end{proof}

\section{Deformed Dirac Operators and Boundary Value Problems}\label{pre}
In \cref{2.1}, we review the framework of \cite{cz24} and construct a deformed Dirac operator combining \cite[(2.21)]{zhang17} and \cite[(3-1)]{cz24}. \cref{2.2} addresses the associated boundary value problem with the local boundary condition from \cite[(2-2)]{cz24}. In \cref{2.3}, we examine a special case on odd-dimensional compact bands following \cite[Ex.~2.6, Cor.~3.10]{cz24}.

\vspace{0.5\baselineskip}
\noindent {\bf Notation Conventions.} In this paper, manifolds are smooth and may have boundaries unless otherwise specified. All metrics on vector bundles are smooth. 

For any Euclidean or Hermitian vector bundle $E$ over a manifold $M$, let $\inn{\cdot,\cdot}$ and $|\cdot|$ denote its fiberwise metric and norm. Let $\Gamma(E)$ and $\Gamma_\cm(E)$ denote the spaces of smooth sections and compactly supported smooth sections of $E$, respectively. For a smooth bundle endomorphism $A\in \Gamma(\en{E})$, let $|A|$ be the fiberwise operator norm. We define $\|A\|_\infty = \sup_{M}|A|$.

\subsection{Clifford Modules and Deformed Dirac Operators}\label{2.1}

Let $\ket{M,g^{TM}}$ be a Riemannian manifold with Levi-Civita connection $\nabla^{TM}$. Recall that for any $X,Y,Z\in\Gamma(TM)$,
\begin{align}\label{levi}
    \begin{split}
        2\inn{\nabla^{TM}_XY,Z} &= X\inn{Y,Z} + Y\inn{X,Z}-Z\inn{X,Y}\\ &\qquad+ \inn{[X,Y],Z} - \inn{[X,Z],Y} - \inn{[Y,Z],X}.
    \end{split}
\end{align}
\begin{defn}[{\cite[\S11]{abs64}\,\cite[\S1]{gl83}}]\label{dir}
    A complex (resp.\ real) \emph{Clifford module}\footnote{This is also called a Dirac bundle in \cite[II.~Def.~5.2]{lm89} and \cite[Def.~2.1]{cz24}.} on $\ket{M,g^{TM}}$ is a triple $(S,\nabla,c)$, where $S$ is a Hermitian (resp.\ Euclidean) vector bundle over $M$ equipped with a metric connection $\nabla$ and a smooth bundle map $c\colon TM\to\en{S}$, called \emph{Clifford action}, such that $c(X)$ is skew-adjoint and
    \begin{align}
        c(X)\,c(Y) + c(Y)\,c(X) &= -2\inn{X,Y},\label{a1}\\
        [\nabla_X,c(Y)] &= c\ket{\nabla^{TM}_X Y},\label{a3}
    \end{align}
    where $X,Y\in \Gamma(TM)$. If there is an orthogonal splitting $S = S_+\oplus S_-$ such that $\nabla$ is even and $c(X)$ is odd for any $X\in TM$, then we say $(S,\nabla,c)$ is \emph{$\bbz_2$-graded}. 
\end{defn}

Throughout this paper, our Clifford modules are Hermitian vector bundles unless otherwise specified. Spinor bundles over spin manifolds are Clifford modules. More generally, twisting by a graded vector bundle yields a new Clifford module:
\begin{exam}[{cf.\ \cite[II.~Prop.~5.10]{lm89}}]\label{twi}
Let $(\mu,\nabla^\mu,c^{\,\mu})$ be a Clifford module on $(M, g^{TM})$. Suppose $\xi = \xi^+\oplus\xi^-$ is a $\bbz_2$-graded Hermitian vector bundle over $M$ equipped with an even Hermitian connection $\nabla^\xi$. Then the twisted bundle
\begin{equation}\label{twicli}
    (\mu\otimes \xi,\nabla^{\mu\otimes \xi},c^{\,\mu} \otimes \tau^\xi)
\end{equation}
is also a Clifford module on $(M, g^{TM})$. Here $\tau^\xi = \pm \id|_{\xi^\pm}$ denotes the $\bbz_2$-grading and $\nabla^{\mu\otimes \xi} = \nabla^\mu \otimes \id + \id\otimes \nabla^\xi$ is the induced tensor product connection.
\end{exam}

From now on, suppose $n = \dim M$, and let $(S,\nabla,c)$ be a Clifford module on $\ket{M,g^{TM}}$. The associated \emph{Dirac operator} $D$ is defined by
\begin{equation}\label{dirac}
    D = \sum_{j=1}^n c(e_j)\nabla_{e_j}\colon \Gamma(S)\to \Gamma(S),
\end{equation}
where $\{e_j\}_{j=1}^n$ is any local orthonormal frame of $TM$.

For $u, v \in \Gamma_\cm(S)$, the $\rml^2$-inner product and the $\rml^2$-norm $\|u\|$ are defined by
\begin{equation}
    \int_M \inn{u,v}
    \quad\text{and}\quad 
    \|u\|^2 = \int_M |u|^2,
\end{equation}
with respect to the Riemannian measure induced by $g^{TM}$. The Dirac operator $D$ is a formally self-adjoint first-order elliptic differential operator; see \cite[Prop.~1.11]{gl83}, \cite[II.~ Prop.~5.3]{lm89}.

If $\partial M \ne \varnothing$, let $\nu$ be the \emph{inward} unit normal on $\partial M$. Following \cite[(1.18--19)]{gilkey93}, we have a Clifford module $(S_\partial = S|_{\partial M},\nabla^\partial,c^\partial)$ on $\partial M$ defined for any $X \in T\partial M$ by
\begin{equation}\label{bdclif}
c^\partial(X) = c(X)\,c(\nu)
\quad \text{and} \quad
\nabla^\partial_X = \nabla_X + \frac{1}{2}c^\partial\bket{\nabla^{TM}_X \nu}.
\end{equation}

Let $\{e_j\}_{j=2}^n$ be a local orthonormal frame on $\partial M$. The \emph{boundary Dirac operator}
\begin{equation}\label{bddir}
    D^\partial = \sum_{j=2}^{n} c^\partial(e_j)\,\nabla^\partial_{e_j}\colon \Gamma(S|_{\partial M}) \to \Gamma(S|_{\partial M})
\end{equation}
is intrinsically defined on $\partial M$ by \cite[Lem.~2.1--2.2]{gilkey93}. Then (see also \eqref{bdcnu}) 
\begin{equation}\label{parti}
    D^\partial c(\nu) = - c(\nu)D^\partial.
\end{equation}
Let $H^{TM}\in C^\infty(\partial M)$ denote the mean curvature with respect to $\nu$, given by
\begin{equation}\label{h100}
    H^{TM} = \dfrac{1}{n-1}\sum_{j=2}^{n}\binn{\nabla^{TM}_{e_j} e_j, \nu} = -\dfrac{1}{n-1}\sum_{j=2}^{n} \binn{\nabla^{TM}_{e_j} \nu,e_j}.
\end{equation}
By \cite[(2.27)]{gilkey93}, the restriction of any $u\in \Gamma(S)$ to $\partial M$ satisfies (see also \eqref{bdint})
\begin{equation}\label{ddh}
    D^\partial u = \frac{n-1}{2}H^{TM}u - \bket{c(\nu)D + \nabla_\nu }u.
\end{equation}

The curvature $R = \nabla^2$ of the connection $\nabla$ on the Clifford module $S$ is an $\en{S}$-valued $2$-form on $M$. We have the Bochner--Weitzenböck formula (see \cite[Prop.~2.5]{gl83}, \cite[II.~Thm.~8.2]{lm89})
\begin{equation}\label{licher}
    D^2 = - \Delta + \calr,
\end{equation}
where the Bochner Laplacian $\Delta$ and the curvature term $\calr$ are given by
\begin{equation}\label{cur-1}
    \Delta = \sum_{j=1}^{n}\bac{\nabla_{e_j}\nabla_{e_j} - \nabla_{\nabla^{TM}_{e_j} e_j}} \quad \text{and} \quad \calr = \frac{1}{2}\sum_{i,j = 1}^{n}c(e_i)\,c(e_j)\,R(e_i,e_j).
\end{equation}
By \eqref{ddh}--\eqref{licher} and Green's formulas, for every $u\in \Gamma_\cm(S)$, we have
\begin{align}\label{lic}
    \begin{split}
        \int_M \abs{Du}^2 
        &= \int_M \bbac{\sum_{j=1}^{n}\abs{\nabla_{e_j} u}^2 + \inn{u,\calr u}} + \int_{\partial M} \bbinn{u,\bbket{\frac{n-1}{2}H^{TM} - D^\partial} u};
    \end{split}
\end{align}
refer to \cite{gilkey93}, \cite[Lem.~4.1]{cz24}, see also \eqref{part2}.

On $\bbz_2$-graded Clifford modules, the bracket of two operators is understood as a superbracket. Following \cite{zhang17,cz24}, we will assume
\begin{set}\label{setdd}
    Let $(M,g^{TM})$ be a complete Riemannian manifold with compact or empty boundary, and let $(S,\nabla,c)$ be a $\bbz_2$-graded Clifford module on $\ket{M,g^{TM}}$. Suppose $\vartheta, V\in \Gamma\ket{\en{S}}$ are odd self-adjoint bundle endomorphisms that anticommute with $c(X)$ for all $X\in TM$, and satisfy
    \begin{align}\label{c6}
        \vartheta^2=\id,\quad [\nabla_X,\vartheta] = 0 \quad \text{and} \quad 
        [V,\vartheta] = V\vartheta + \vartheta V = 0.
    \end{align}
    For constants $\delta>0$ and $T\in\bbr$ together with smooth functions $h\colon M\to [\delta,\infty)$ and $\psi\colon M\to\bbr$, we define a \emph{deformed Dirac operator}
    \begin{equation}\label{defd1}
        \calb_{h,\psi,T} = hDh + \psi \vartheta + TV\colon \Gamma(S)\to \Gamma(S).
    \end{equation}
    As in \textup{\cite[(2-1)]{cz24}}, we take a locally constant function $s\colon \partial M \to \{-1,1\}$ and define
    \begin{equation}\label{bc}
        \chi = s\,c(\nu)\,\vartheta \in \Gamma\ket{\en{S|_{\partial M}}}.
    \end{equation}
\end{set}
\begin{rmk}\label{admissible}
    Operators of the form $D+\psi\vartheta$ used in \cite[(3-1)]{cz24} are called \emph{Callias-type operators} \cite{ca78}, and the pair $(S,\vartheta)$ is called a relative Dirac bundle \cite[Def.~2.2]{cz24}. In \eqref{defd1}, the deformation term $TV$ plays the role of $\beta^{-1}f(\frac{\rho}{R})\widehat{c}(\sigma)$ in \cite[(2.21)]{zhang17}, which comes from the analytic localization point of view. The strategy of using two deformations was previously utilized in \cite[(2.18)]{sz22} and \cite[(2.11)]{swz22}. For more discussions on deformed Dirac operators, see \cite{zhang23}. The use of a choice of sign $s\colon \partial M \to \{-1,1\}$ goes back to \cite[Thm.~B]{freed}. 
\end{rmk}
Following \cite[\S2]{cz24}, the bundle endomorphism $\chi$ is an even self-adjoint involution ($\chi^2 =\id$) such that, for all $X\in T\partial M$, we have
\begin{equation}\label{chicommute}
    c(\nu)\chi = -\chi c(\nu) \quad \text{and} \quad c(X)\chi = \chi c(X).
\end{equation}
By \eqref{bddir}--\eqref{parti} and \cref{setdd}, we obtain the identity \cite[(2-11)]{cz24} on $\partial M$:
\begin{equation}\label{bd0}
    D^\partial\chi = -\chi D^\partial.
\end{equation}
Meanwhile, we have $V\chi = \chi V$ under \cref{setdd}. It follows from \eqref{chicommute} that
\begin{equation}\label{bd01}
    \bket{c(\nu)V}\chi = -\chi \bket{c(\nu)V}.
\end{equation}

Define the following subspace of compactly supported smooth sections:
\begin{equation}\label{secbdcon}
    \Gamma_{\vartheta,s}(S) = \{u\in \Gamma_\cm(S):\chi (u|_{\partial M}) = u|_{\partial M}\}.
\end{equation}
Following \cite[(2-12)]{cz24}, for $u\in \Gamma_{\vartheta,s}(S)$, the formulas \eqref{bd0}--\eqref{bd01} imply
\begin{align}\label{bd1}
    \inn{u|_{\partial M},D^\partial (u|_{\partial M})} &= 0,\\
    \inn{u|_{\partial M},c(\nu)V (u|_{\partial M})} &= 0.\label{bd2}
\end{align}

For $u_1,u_2\in \Gamma_\cm(S)$, we have the Green's formula for the inward unit normal $\nu$:
\begin{equation}\label{green}
    \int_M\inn{Du_1,u_2} = \int_M\inn{u_1,Du_2} + \int_{\partial M}\inn{u_1,c(\nu)u_2}.
\end{equation}
    For $\psi\in C^\infty(M)$, we identify the differential $\di \psi$ with the gradient of $\psi$ with respect to $g^{TM}$. Since $D$ and $V$ are odd, we have the superbracket $[D,V] = DV + VD$. 
    
    We have the following analogue of \cite[Prop.~4.2]{cz24}:

\begin{prop}\label{spe2}
    In \cref{setdd}, for $u\in \Gamma_{\vartheta,s}(S)$, we have 
    \begin{align}
            \label{spectral}
            \int_M |\calb_{h,\psi,T} \,u|^2
            =
            &\int_M  \babs{\ket{hDh + \psi\vartheta} u}^2 + \int_M \Big\langle u, \bbket{T^2V^2 + Th^2[D,V]}u \Big\rangle
        \\
        \begin{split}\label{spectral2}
            =&\int_M  h^2|D(hu)|^2 + \int_{\partial M} s\psi |hu|^2 \\
            & \qquad+ \int_M \Big\langle u, \bbket{\psi^2 + h^2c(\di\psi)\vartheta + T^2 V^2 + Th^2[D,V]}u \Big\rangle.
        \end{split}
    \end{align}
\end{prop}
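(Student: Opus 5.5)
We expand $|\calb_{h,\psi,T}u|^2 = |(hDh+\psi\vartheta)u + TVu|^2$ pointwise, integrate, and then treat the cross terms using the boundary condition $\chi u = u$ on $\partial M$ together with Green's formula \eqref{green}. For \eqref{spectral}, write $P = hDh+\psi\vartheta$. Then
\begin{equation*}
\int_M|\calb_{h,\psi,T}u|^2 = \int_M|Pu|^2 + T^2\int_M|Vu|^2 + 2T\,\mathrm{Re}\int_M\inn{Pu,Vu}.
\end{equation*}
Since $V$ is self-adjoint, $|Vu|^2=\inn{u,V^2u}$.

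In the cross term, $\inn{\psi\vartheta u,Vu}$ is real. Its real part vanishes because $V\vartheta = -\vartheta V$ and both are self-adjoint, so $\inn{\vartheta u,Vu}+\inn{Vu,\vartheta u} = \inn{u,(\vartheta V+V\vartheta)u}=0$. For the term $\inn{hDhu,Vu}$, apply Green's formula \eqref{green} with $u_1=hu$ and $u_2=hVu$. This gives
\begin{equation*}
\int_M\inn{D(hu),hVu} = \int_M\inn{hu,D(hVu)} + \int_{\partial M}\inn{hu,c(\nu)Vhu}.
\end{equation*}
The boundary term is $h^2\inn{u,c(\nu)Vu}$, which vanishes by \eqref{bd2}. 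Since $h$ is a scalar function, $D(hVu) = c(\di h)Vu + hDVu$, and likewise $VD(hu)=V c(\di h)u + hVDu$. The anticommutation $Vc(X)=-c(X)V$ makes the $\di h$ terms cancel in the symmetrized sum. Hence
\begin{equation*}
2\,\mathrm{Re}\int_M\inn{hDhu,Vu} = \int_M\inn{u,h^2(DV+VD)u}.
\end{equation*}
This is exactly the term $Th^2[D,V]$, which proves \eqref{spectral}.

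For \eqref{spectral2}, expand $|hDhu+\psi\vartheta u|^2 = h^2|D(hu)|^2+\psi^2|u|^2+2\,\mathrm{Re}\inn{hD(hu),\psi\vartheta u}$, using $\vartheta^2=\id$. Apply Green's formula again, now with $u_1=hu$ and $u_2=h\psi\vartheta u$:
\begin{equation*}
\int_M\inn{D(hu),h\psi\vartheta u} = \int_M\inn{hu,D(h\psi\vartheta u)} + \int_{\partial M}\psi h^2\inn{u,c(\nu)\vartheta u}.
\end{equation*}
The interior terms are handled in three steps.
\begin{itemize}
\item Since $[\nabla,\vartheta]=0$ and $\vartheta$ anticommutes with $c$, we have $D\vartheta=-\vartheta D$.
\item This gives $D(h\psi\vartheta u) = c(\di(h\psi))\vartheta u - h\psi\vartheta Du$.
\item The terms involving $h\psi\,\vartheta D$ and $c(\di h)$ cancel against their conjugates in the real part. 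What remains is $\int_M h^2\inn{u,c(\di\psi)\vartheta u}$.
\end{itemize}
The endomorphism $c(\di\psi)\vartheta$ is self-adjoint, because $c$ is skew-adjoint and anticommutes with $\vartheta$; so this term is real. For the boundary term, on $\partial M$ we have $u=\chi u = s\,c(\nu)\vartheta u$, so $c(\nu)\vartheta u = s u$ since $s^2=1$. The boundary term therefore becomes $\int_{\partial M}s\psi|hu|^2$.

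The main obstacle is sign and convention bookkeeping rather than anything conceptual. One must check that the boundary contribution keeps its factor $2$ after taking real parts, matching the normalization in \cite[Prop.~4.2]{cz24}. One must also fix the orientation convention in \eqref{green} for the inward normal, and confirm that the $\di h$ terms cancel exactly. I would follow the computation of \cite[Prop.~4.2]{cz24} line by line, with $hu$ in place of $u$, to pin down these constants.
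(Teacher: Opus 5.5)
Your proposal is correct and follows the paper's proof: you expand the square, move $hDh$ across in one cross term by Green's formula \eqref{green}, recombine with the adjoint cross term into $[hDh,TV]=Th^2[D,V]$ and $[hDh,\psi\vartheta]=h^2c(\di\psi)\vartheta$, and treat the single boundary term via \eqref{bd2} (where it vanishes) and via $c(\nu)\vartheta u=su$ (where it becomes $\int_{\partial M}s\psi|hu|^2$). Two corrections: drop your closing worry, since the boundary term enters with coefficient $1$, not $2$ — Green's formula is applied only to $\int_M\inn{hDhu,\psi\vartheta u}$, while its partner $\int_M\inn{\psi\vartheta u,hDhu}=\int_M\inn{u,\psi\vartheta hDhu}$ is handled by self-adjointness alone (if you took $2\,\mathrm{Re}$ of the Green-transformed expression instead, the boundary would appear twice, but the $-h^2\psi\vartheta D$ term would no longer cancel and would give back one copy with the opposite sign); and $\inn{\vartheta u,Vu}$ is purely imaginary, not real, because $\vartheta V$ is skew-adjoint, which is why its real part vanishes.
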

\begin{proof}By \eqref{defd1}, \eqref{green}, and the assumptions in \cref{setdd}, we have
    \begin{align}\label{span}
        \begin{split}
            \int_M |\calb_{h,\psi,T} \,u|^2
            =& \int_M \!\babs{\ket{hDh + \psi\vartheta} u}^2 + \int_M\!\bbinn{u, \bbket{T^2V^2 + [hDh +\psi\vartheta, TV]}u}\\
            &\qquad + \int_{\partial M}\binn{hu,c(\nu)TV(hu)}.
        \end{split}
    \end{align}
    By \eqref{c6}, \eqref{bd2}, and $[hDh,TV] = Th^2[D,V]$, we obtain \eqref{spectral}. Similarly, we have
    \begin{align}\label{span2}
        \begin{split}
            \int_M  \babs{\ket{hDh + \psi\vartheta} u}^2
            =& \int_M  \abs{hDhu}^2 + \int_M\bbinn{u, \bbket{\psi^2\vartheta^2 + [hDh,\psi\vartheta]}u}\\
            &\qquad + \int_{\partial M}\binn{hu,c(\nu)\psi\vartheta(hu)}.
        \end{split}
    \end{align}
    By \eqref{span}--\eqref{span2}, $[hDh,\psi\vartheta] = h^2c(\di\psi)\vartheta$, and $\chi (u|_{\partial M}) =u|_{\partial M}$, we obtain \eqref{spectral2}. 
\end{proof}

\subsection{Boundary Value Problems and Index Theory}\label{2.2}

Assume \cref{setdd}. Let $\rml^2(M,S)$ and $\rml^2_{\mathrm{loc}}(M,S)$ denote the spaces of square-integrable and locally square-integrable sections, respectively. The Sobolev space $\rmh^1_{\mathrm{loc}}(M,S)$ consists of all sections $u\in \rml^2_{\mathrm{loc}}(M,S)$ such that $\nabla u \in \rml^2_{\mathrm{loc}}(M,T^*M\otimes S)$ in the distributional sense. Since $\partial M$ is compact, the restriction $\Gamma(S)\to \Gamma(S|_{\partial M}), u\mapsto u|_{\partial M}$ extends by the trace theorem to a continuous linear operator
\begin{equation}
    \scrr\colon \rmh^1_{\mathrm{loc}}(M,S) \to \rmh^{1/2}(\partial M ,S|_{\partial M}).
\end{equation}

By \cite[p.~1183]{cz24}, the Dirac operator $D$ is essentially self-adjoint on the domain $\Gamma_{\vartheta,s}(S)$ (the crucial ingredient is that $c(\nu)$ anticommutes with $\chi$) and its closure has domain given by the Sobolev space
\begin{align}\label{bdcon2}
    \begin{split}
        \rmh^1_{\vartheta,s}(M,S) &= \big\{u\in \rmh^1_{\mathrm{loc}}(M,S) \cap \rml^2(M,S):\big.\\
        &\qquad\qquad\qquad\big.D u\in \rml^2(M,S), \chi \scrr u = \scrr u\big\};
    \end{split}
\end{align}
refer to \cite[\S7.2]{bb12} and \cite[Lem.~7.3]{bb12}. Endow $\rmh^1_{\vartheta,s}(M,S)$ with the graph norm
\begin{equation}\label{graphnorm}
    \norm{u}_{\rmh^1_{\vartheta,s}(M,\,S)} = \bac{\norm{u}^2 + \norm{Du}^2}^{1/2}.
\end{equation}
In particular, the subspace $\Gamma_{\vartheta,s}(S)$ is dense in $\rmh^1_{\vartheta,s}(M,S)$; see also \cite[Thm.~4.10]{bb16}.

Further assume that $h,\psi$ are bounded and $c(\di h), V$ are uniformly bounded under \cref{setdd}. By \eqref{defd1}, we have
\begin{equation}\label{defd2}
    \calb_{h,\psi,T} 
    = h^2D + hc(\di h) + \psi\vartheta + TV.
\end{equation}
Then the closure of the operator $\calb_{h,\psi,T}$ on $\Gamma_{\vartheta,s}(S)$ also has $\rmh^1_{\vartheta,s}(M,S)$ as its domain. Meanwhile, the graph norm on $\rmh^1_{\vartheta,s}(M,S)$ induced by $\calb_{h,\psi,T}$ is equivalent to \eqref{graphnorm}. 

Let  $\calb_{h,\psi,T,s}$ denote the operator $\calb_{h,\psi,T}$ on the domain $\rmh^1_{\vartheta,s}(M,S)$. Since $\vartheta,V$ are self-adjoint, the operator $\calb_{h,\psi,T,s}$ is also self-adjoint by \eqref{defd1}.

The boundary Dirac operator $D^\partial$ from \eqref{bddir} is an adapted boundary operator\footnote{Let $\partial M\times [0,\epsilon)$ be a geodesic collar neighborhood of $\partial M$. Let   $t\in [0,\epsilon)$ denote the normal coordinate so that $\frac{\partial}{\partial t}$ is the inward unit normal vector field. After identifying
$S|_{\partial M\times\{t\}}$ with $S|_{\partial M}$ by parallel transport along the normal geodesics, $B_{h,\psi,T}=hDh + \psi \vartheta + TV$ is of the form \begin{equation}
h^2 c(\tfrac{\partial}{\partial t}) \,\bket{\tfrac{\partial}{\partial t} + D^\partial + A_t}\notag
\end{equation}
on $\partial M\times [0,\epsilon)$, where $\{A_t\}_{t\in [0,\epsilon)}$ is a smooth family of differential operators on $\Gamma(S|_{\partial M})$ of order at most one, with $A_0$ of order zero; see \cite[Lem.~1.4]{bb12}.} for $\calb_{h,\psi,T}$ in the sense of \cite[p.~4]{bb12}. Then \eqref{bd0} and \cite[Cor.~7.23]{bb12} imply that $\chi\scrr u = \scrr u$ is a local elliptic boundary condition for $\calb_{h,\psi,T}$. This is a chiral boundary condition; see \cite[Ex.~7.26]{bb12} and \cite[Ex.~4.20]{bb16} for further discussions.

\begin{prop}[cf.\ {\cite[Thm.~3.4]{cz24}}]\label{fredholm}
    Under \cref{setdd}, further assume that $h,\psi$ are bounded and $c(\di h), V$ are uniformly bounded. 
    If there exists a compact subset $L_0\subseteq M$ and a constant $C_0>0$ such that
    \begin{equation}\label{positive}
        \psi^2 + h^2c(\di\psi)\vartheta + T^2 V^2 + Th^2[D,V] \ge C_0 \quad \text{on } M\setminus L_0
    \end{equation}
    as an inequality of self-adjoint bundle endomorphisms, then the operator
    \begin{equation}\label{bhpsis}
        \calb_{h,\psi,T,s}\colon \rmh^1_{\vartheta,s}(M,S) \to \rml^2(M,S)
    \end{equation}
    is Fredholm and self-adjoint when viewed as an unbounded operator on $\rml^2(M,S)$ with domain $\rmh^1_{\vartheta,s}(M,S)$.
\end{prop}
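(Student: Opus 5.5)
Self-adjointness is already established in the discussion preceding the statement: $\vartheta$ and $V$ are bounded self-adjoint, $h^2D+hc(\di h)$ is a symmetric lower-order perturbation of $D$, and the closure of $\calb_{h,\psi,T}$ on $\Gamma_{\vartheta,s}(S)$ has domain $\rmh^1_{\vartheta,s}(M,S)$. So the task is the Fredholm property. Since the operator is self-adjoint, it suffices to show two things: the kernel is finite dimensional, and the range is closed. The standard route, following \cite[Thm.~3.4]{cz24}, is a coercivity estimate at infinity. I would aim to prove the existence of a constant $C>0$ and a compact set $L\supseteq L_0$ such that
\begin{equation*}
\norm{u}_{\rmh^1_{\vartheta,s}(M,S)}\le C\bket{\norm{\calb_{h,\psi,T}u}+\norm{u}_{\rml^2(L)}}
\quad\text{for all } u\in \rmh^1_{\vartheta,s}(M,S).
\end{equation*}
This estimate is then combined with local compactness: the embedding of $\rmh^1$-sections restricted to a compact set (with compact boundary piece) into $\rml^2$ is compact by Rellich. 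These give the Fredholm property by the usual argument. Any bounded sequence with $\calb u_j$ convergent has a subsequence converging in $\rml^2(L)$, hence Cauchy in the graph norm. This yields finite-dimensional kernel and closed range, and the cokernel equals the kernel by self-adjointness.

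To prove the estimate I would use \cref{spe2}, which first requires showing that the boundary term $\int_{\partial M}s\psi|hu|^2$ is harmless. Since $\partial M$ is compact, I would enlarge $L$ to contain a collar of $\partial M$. Then, for $u$ supported away from $L$, the boundary term vanishes. By \eqref{spectral2} and hypothesis \eqref{positive}, such $u$ satisfy
\begin{equation*}
\norm{\calb u}^2\ge \int_M h^2|D(hu)|^2 + C_0\norm{u}^2\ge C_0\norm{u}^2,
\end{equation*}
using the density of $\Gamma_{\vartheta,s}(S)$. For general $u$, I would take a cutoff $\phi\colon M\to[0,1]$ with compact support, $\phi=1$ on $L$, and $|\di\phi|\le\varepsilon$. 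This is possible because $M$ is complete, though if $M$ is compact the whole statement reduces to the standard elliptic boundary problem theory from \cite{bb12}. Applying the previous bound to $(1-\phi)u$ and using
\begin{equation*}
[\calb,\phi]=h^2c(\di\phi),
\end{equation*}
which is bounded by $\norm{h}_\infty^2\varepsilon$, I would get
\begin{equation*}
\norm{(1-\phi)u}\le C_0^{-1/2}\bket{\norm{\calb u}+\norm{h}^2_\infty\varepsilon\norm{u}}.
\end{equation*}
Choosing $\varepsilon$ small absorbs the $\varepsilon\norm{u}$ term, which bounds $\norm{u}$ by $\norm{\calb u}+\norm{\phi u}$.

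The graph norm control then follows from the equivalence, noted before the proposition, of the graph norm of $\calb_{h,\psi,T}$ with \eqref{graphnorm}. Concretely, $h\ge\delta$ and the lower-order terms are bounded, so
\begin{equation*}
\norm{Du}\le \delta^{-2}\bket{\norm{\calb u}+C\norm{u}}.
\end{equation*}

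The main obstacle is the compactness step near the boundary. The map from $\{u\in\rmh^1_{\vartheta,s}: \text{graph norm bounded}\}$ to $\rml^2(L)$ must be compact, which requires that the graph norm controls the full $\rmh^1$ norm on the compact piece $L$ including the boundary. This is exactly where ellipticity of the local chiral boundary condition $\chi\scrr u=\scrr u$ is needed: by \eqref{bd0} and \cite[Cor.~7.23]{bb12}, we get regularity $\rmh^1$ up to the boundary with the estimate $\norm{u}_{\rmh^1(L)}\lesssim \norm{Du}+\norm{u}$. I would invoke \cite[Thm.~6.11, Cor.~7.23]{bb12} for this and then Rellich on $L$.
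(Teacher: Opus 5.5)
Your proposal is correct and follows the paper's route. The paper likewise combines \eqref{spectral2} with \eqref{positive} to get $\|\calb_{h,\psi,T}u\|^2\ge C_0\|u\|^2$ for smooth $u$ supported in $M\setminus(L_0\cup\partial M)$, which is coercivity at infinity, and then simply invokes \cite[Cor.~8.6]{bb12}; you unpack that corollary by hand via a cutoff, the elliptic estimate up to the boundary for the chiral condition, and Rellich compactness. One minor simplification: the condition $|\di\phi|\le\varepsilon$ is unnecessary, because the commutator term $h^2c(\di\phi)u$ is supported in the compact set $\supp\phi$ and can be absorbed into $\norm{u}_{\rml^2(\supp\phi)}$.
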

\begin{proof}
    For $u\in\Gamma_\cm(S)$ supported in $M\setminus (L_0 \cup \partial M)$, by \eqref{spectral2} and \eqref{positive}, we have
    \begin{equation}
            \int_{M}\abs{\calb_{h,\psi,T}\,u}^2
            \ge
            C_0 \int_{M}\abs{u}^2.
    \end{equation}
    Since $L_0\cup \partial M$ is compact, the operator $\calb_{h,\psi,T}$ is coercive at infinity in the sense of \cite[Def.~8.2]{bb12}. Then $\calb_{h,\psi,T,s}$ is Fredholm by \cite[Cor.~8.6]{bb12}. 
\end{proof}

Recall that $S = S^+\oplus S^{-}$ is $\bbz_2$-graded and $\chi$ is even. Let $\rmh^1_{\vartheta,s}(M,S^\pm)$ be the Sobolev spaces defined as in \eqref{bdcon2}. Since $\calb_{h,\psi,T,s}$ is odd and self-adjoint, it restricts to the following operators:
\begin{equation}\label{bpm}
    \calb^\pm_{h,\psi,T,s}\colon \rmh^1_{\vartheta,s}(M,S^\pm) \to \rml^2(M,S^\mp),
\end{equation}
which are adjoint to each other and satisfy
\begin{equation}\label{bpm0}
    \calb_{h,\psi,T,s} =
    \begin{pmatrix}
     0 & \calb^-_{h,\psi,T,s}\\
     \calb^+_{h,\psi,T,s} & 0
    \end{pmatrix}.
\end{equation}
Under \cref{fredholm}, we define the \emph{index} of the operator $\calb_{h,\psi,T,s}$ by
\begin{equation}\label{index}
    \ind \calb_{h,\psi,T,s} = \ind \calb^+_{h,\psi,T,s} = \dim \ker \calb^+_{h,\psi,T,s} - \dim \ker \calb^-_{h,\psi,T,s}\in\bbz.
\end{equation}

\subsection{An Index Formula for Odd-Dimensional Compact Bands}\label{2.3}

Following \cite[Ex.~2.6]{cz24}, we consider the case where $W$ is an odd-dimensional connected compact band with $\partial W = \partial_-W \sqcup \partial_+W$. Recall that $\partial_\pm W$ are nonempty unions of connected components. Let $W^\infty$ be the noncompact manifold without boundary obtained by attaching two cylindrical ends to $W$, that is
\begin{equation}
    W^\infty = W^-\cup_{\partial_- W} W \cup_{\partial_+ W} W^+,
\end{equation}
where $W^\pm = \partial_\pm W \times [1,\infty)$. Let $g^{TW^\infty}$ be a complete Riemannian metric on $W^\infty$.

Let $(\mu,\nabla^\mu,c^{\,\mu})$ be a Clifford module on $(W^\infty, g^{TW^\infty})$. Suppose $\xi = \xi^+\oplus\xi^-$ is a $\bbz_2$-graded Hermitian vector bundle over $W^\infty$ equipped with an even Hermitian connection $\nabla^\xi$ and an odd self-adjoint bundle endomorphism $V^\xi\in\Gamma\ket{\en{\xi}}$. For the $\bbz_2$-grading $\tau^\xi$ on $\xi$, we obtain a Clifford module by \cref{twi}:
\begin{equation}\label{twicli1}
    (\mu\otimes \xi,\nabla^{\mu\otimes \xi},c^{\,\mu} \otimes \tau^\xi).
\end{equation}
    By \cite[Ex.~2.6]{cz24}, the Clifford module $\mu\otimes\xi$ induces a $\bbz_2$-graded Clifford module
\begin{equation}\label{dirdou1}
    S = \ket{\mu \otimes \xi} \oplus \ket{\mu \otimes \xi},
\end{equation}
where we regard the first summand as $S^+$ and the second summand as $S^-$. Its Hermitian connection and Clifford action are defined by
    \begin{equation}\label{cldouble1}
        \nabla = 
        \begin{pmatrix}
            \nabla^{\mu \otimes \xi} & 0 \\
            0 & \nabla^{\mu \otimes \xi}
        \end{pmatrix}
        \quad
        \text{and}
        \quad
        c = 
        \begin{pmatrix}
            0 & c^{\,\mu} \otimes \tau^\xi \\
            c^{\,\mu} \otimes \tau^\xi & 0
        \end{pmatrix}.
    \end{equation}
    Then the associated Dirac operator is given by
    \begin{equation}\label{dir2}
        D = \begin{pmatrix}
        0 & D^{\mu \otimes \xi} \\
        D^{\mu \otimes \xi}  & 0
    \end{pmatrix}.\end{equation}
    Meanwhile, we have two odd self-adjoint bundle endomorphisms
    \begin{equation}\label{oddend}
        V = \begin{pmatrix}
                0 & \id_\mu\otimes V^{\xi} \\
                \id_\mu\otimes V^{\xi} & 0
            \end{pmatrix}
        \quad\text{and}\quad 
        \vartheta = \begin{pmatrix}
        0 & - \sqrt{-1} \\
        \sqrt{-1} & 0
        \end{pmatrix},
    \end{equation}
    which satisfy the requirements in \cref{setdd}.
    
    Following \cite[Cor.~3.10]{cz24}, define a choice of signs $s\colon\partial W\to \{-1,1\}$ by setting
    \begin{equation}
        s\equiv \pm 1\qquad\text{on } \partial_\pm W.
    \end{equation}
    Let $\delta>0$ and $T\in\bbr$ be two constants. Let $h\colon W\to [\delta,\infty)$ and $\psi\colon W\to \bbr$ be smooth functions. Since $W$ is compact, the condition \eqref{positive} automatically holds. So we obtain a self-adjoint Fredholm operator
    \begin{equation}\label{bdproblem}
        \calb^W_{h,\psi,T,s} = hDh + \psi \vartheta + TV\colon \rmh^1_{\vartheta,s}(W,S) \to \rml^2(W,S)
    \end{equation}
    by \cref{fredholm}. The superscript $W$ indicates that $\calb^W_{h,\psi,T,s}$ is defined on $W$.

    By \eqref{bdclif}, the Clifford module $(\mu,\nabla^\mu,c^{\,\mu})$ on $W^\infty$ induces a Clifford module $\mu_\partial = \mu|_{\partial W}$ on $\partial W$. Consider the $\bbz_2$-grading
    \begin{equation}\label{z2grading}
        \mu_\partial = \mu^+_\partial \oplus \mu^-_\partial
    \end{equation}
    determined by the involution $\tau^{\mu_\partial} \coloneqq c^{\,\mu}(\nu)/\sqrt{-1}$, where $\nu$ is the inward unit normal.

    Consider the Clifford module\footnote{Here ``$\widehat{\otimes}$'' is the notation for the $\bbz_2$-graded tensor product; cf.\ \cite{qu85}.}
        $\mu_\partial\,\widehat{\otimes}\,\xi$ on the closed manifold $\partial W$, which is the Clifford module $\mu_\partial\,\otimes\,\xi$ with the Clifford action $c^{\,\mu_\partial}\otimes \id_\xi$ while carrying a $\bbz_2$-grading $\tau^{\mu_\partial} \otimes \tau^\xi$; compare with \cref{twi}. There is a splitting of the Dirac operator $D^{\mu_\partial\widehat{\otimes}\xi}$ on $\mu_\partial\,\widehat{\otimes}\,\xi$ given by
    \begin{equation}\label{z2grading3}
        D^{\mu_\partial\widehat{\otimes}\xi} = \begin{pmatrix}
            0  & D^{\mu_\partial\widehat{\otimes}\xi,-}\\
            D^{\mu_\partial\widehat{\otimes}\xi,+} & 0 
        \end{pmatrix},
    \end{equation}
    where the restriction operators $D^{\mu_\partial\widehat{\otimes}\xi,\pm}$ are formally adjoint to each other and
    \begin{align}
        D^{\mu_\partial\widehat{\otimes}\xi,+}\colon & \Gamma\bket{(\mu^+_\partial \otimes \xi^+) \oplus (\mu^-_\partial \otimes \xi^-)} \to \Gamma\bket{(\mu^-_\partial \otimes \xi^+) \oplus (\mu^+_\partial \otimes \xi^-)},\\
        D^{\mu_\partial\widehat{\otimes}\xi,-}\colon & \Gamma\bket{(\mu^-_\partial \otimes \xi^+) \oplus (\mu^+_\partial \otimes \xi^-)} \to \Gamma\bket{(\mu^+_\partial \otimes \xi^+) \oplus (\mu^-_\partial \otimes \xi^-)}.\label{z2grading5}
    \end{align}
\begin{lem}[cf.\ {\cite[Cor.~3.10]{cz24}}]\label{band2}
    Let $W$ be an odd-dimensional connected compact band. Under the above settings, for any constants $\delta>0, T\in\bbr$ and any smooth functions $h\colon W\to [\delta,\infty)$, $\psi\colon W\to \bbr$, we have
    \begin{equation}\label{partitionfor}
        \ind \calb^W_{h,\psi,T,s}  =  \ind \ket{D^{\mu_\partial \widehat{\otimes} \xi}|_{\partial_-W}}.
    \end{equation}
\end{lem}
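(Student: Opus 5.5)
The index is independent of $(h,\psi,T)$, so I will deform to a convenient operator and then use the Cecchini--Zeidler partition/cobordism argument.

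\textbf{Step 1: homotopy invariance.} For $\lambda\in[0,1]$ the family
\[
h_\lambda=(1-\lambda)h+\lambda,\qquad \psi_\lambda=(1-\lambda)\psi,\qquad T_\lambda=(1-\lambda)T
\]
consists of admissible data: $h_\lambda\ge\min(\delta,1)>0$, and all coefficients are bounded because $W$ is compact. By \cref{fredholm}, each $\calb^W_{h_\lambda,\psi_\lambda,T_\lambda,s}$ is Fredholm on the fixed domain $\rmh^1_{\vartheta,s}(W,S)$. By \eqref{defd2} the family differs from $D$ by zeroth-order terms, so it is norm-continuous from $\rmh^1_{\vartheta,s}(W,S)$ to $\rml^2(W,S)$. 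The graph norms are uniformly equivalent, so the index is constant along the path. Hence
\[
\ind \calb^W_{h,\psi,T,s}=\ind \calb^W_{1,0,0,s},
\]
where $\calb^W_{1,0,0,s}$ is just $D$ with the boundary condition $\chi\scrr u=\scrr u$, $s=\pm1$ on $\partial_\pm W$.

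\textbf{Step 2: reduce to Cecchini--Zeidler.} For the unperturbed operator, the bundle $S$ in \eqref{dirdou1} with $\vartheta$ from \eqref{oddend} is exactly the relative Dirac bundle of \cite[Ex.~2.6]{cz24}, built from the odd-dimensional Clifford module $\mu\otimes\xi$. The endomorphism $V$ no longer appears. So \cite[Cor.~3.10]{cz24} applies directly: with signs $s=\pm1$ on $\partial_\pm W$, the index equals the index of the boundary Dirac operator on $\partial_-W$ for the grading induced by $\tau^{\mu_\partial}$ and $\tau^\xi$.

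Their argument runs as follows. First attach the cylindrical ends $W^\pm$. Then replace the local boundary condition by a Callias potential $\psi$ that is $\pm$ a large constant on the two ends. The boundary decomposition $\partial W=\partial_-W\sqcup\partial_+W$ with opposite signs then identifies the index with $\ind(D^{\mu_\partial\widehat{\otimes}\xi})|_{\partial_-W}$. This is the Callias index theorem in its relative form, using the fact that $\partial_-W$ and $\partial_+W$ are cobordant through $W$.

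\textbf{Main obstacle: matching conventions.} I need to check that the grading $\tau^{\mu_\partial}\otimes\tau^\xi$, the splitting \eqref{z2grading3}--\eqref{z2grading5}, and the twisted Clifford action $c^\mu\otimes\tau^\xi$ agree with those in \cite{cz24}, whose statement concerns an untwisted or ungraded bundle. The plan for this check:
\begin{enumerate}
\item On $\partial W$, compute $\chi=s\,c(\nu)\vartheta$ in the block form \eqref{cldouble1}--\eqref{oddend}.
\item Identify the $+1$-eigenbundle of $\chi$ restricted to $S^+=\mu\otimes\xi$ with $(\mu^+_\partial\otimes\xi^+)\oplus(\mu^-_\partial\otimes\xi^-)$, or its complement, according to the sign $s$.
\end{enumerate}
This fixes the sign on $\partial_-W$ and shows the boundary condition is of the chiral type covered in \cite{cz24}.

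If the twisted case is not literally covered there, I would repeat their proof. First, a gluing/splitting theorem \cite[Thm.~8.17]{bb12} relates the index with local boundary conditions to the index of the Callias operator on $W^\infty$. Second, on a product collar $\partial_-W\times\bbr$ with $\psi$ changing sign, the kernel of the model operator is computed by separation of variables. The kernel consists of $e^{-\int\psi}$ times $\ker D^{\mu_\partial\widehat{\otimes}\xi,\pm}$, which yields \eqref{partitionfor}.
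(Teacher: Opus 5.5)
Your proposal is correct and follows essentially the paper's route. You homotope on the fixed domain $\rmh^1_{\vartheta,s}(W,S)$ and then run the Cecchini--Zeidler argument: cylindrical ends, the splitting theorem \cite[Thm.~8.17]{bb12} with vanishing end contributions, the Callias/partitioned index theorem, and cobordism invariance.

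The one difference is in how the last step is obtained. The paper re-derives \cite[Cor.~3.10]{cz24} by writing $D^{\mu\otimes\xi}=D^{\mu\otimes\xi^+}\oplus(-D^{\mu\otimes\xi^-})$ and applying \cite[Cor.~1.9]{an93} to each summand. Your direct citation of \cite[Cor.~3.10]{cz24} is also legitimate, because $(\mu\otimes\xi,c^{\,\mu}\otimes\tau^\xi)$ is itself a Clifford module on odd-dimensional $W$. Its induced boundary module, graded by $c(\nu)/\sqrt{-1}=\tau^{\mu_\partial}\otimes\tau^\xi$, is exactly $\mu_\partial\widehat{\otimes}\xi$. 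Moreover $\chi|_{S^+}=-s\,\tau^{\mu_\partial}\otimes\tau^\xi$, so your planned sign check works out as you expect.
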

\begin{proof}
    First, we take a smooth function $\psi_\infty\colon W^\infty\to \bbr$ such that
    \begin{equation}\label{psipm1}
        \psi_\infty \equiv \pm 1 \qquad\text{on } W^\pm.
    \end{equation}    
    Given constants $\delta>0,T\in\bbr$ and smooth functions $h\colon W\to [\delta,\infty)$ and $\psi\colon W\to \bbr$, the operator $\calb^W_{h,\psi,T,s}$ is homotopic to $\calb^W_{1,\psi_\infty,0,s}$ through a continuous family of self-adjoint Fredholm operators with fixed domain $\rmh^1_{\vartheta,s}(W,S)$ by \cref{fredholm}. So
    \begin{equation}\label{homotopy}
        \ind \calb^W_{h,\psi,T,s}
        = \ind \calb^W_{\psi_\infty,s},
    \end{equation}
    where we write $\calb^W_{\psi_\infty} = \calb^W_{1,\psi_\infty,0}$ and $\calb^W_{\psi_\infty,s} = \calb^W_{1,\psi_\infty,0,s}$ for simplicity.

    By \eqref{psipm1}, the formula \eqref{positive} holds for $\psi_\infty$ outside $W$ when $h\equiv 1$ and $T=0$. By \cref{fredholm}, we obtain a self-adjoint Fredholm operator (cf.\ \cite[Cor.~3.10]{cz24})
    \begin{equation}\label{binfty}
        \calb^{W^\infty}_{\psi_\infty} = D + \psi_\infty\vartheta = 
        \begin{pmatrix}
            0 & D^{\mu\otimes\xi} - \sqrt{-1}\psi_\infty\\
            D^{\mu\otimes\xi} + \sqrt{-1}\psi_\infty & 0
        \end{pmatrix}
    \end{equation}
    on the noncompact manifold $W^\infty$ without boundary. Applying the splitting theorem \cite[Thm.~3.6]{cz24}, \cite[Thm.~8.17]{bb12} to $W^\infty$ along $\partial W = \partial_-W\sqcup \partial_+W$, we have
    \begin{equation}\label{splittingindex}
        \ind \calb^{W^\infty}_{\psi_\infty} = \ind \calb^{W^-}_{-1,-1} + \ind \calb^W_{\psi_\infty,s} + \ind \calb^{W^+}_{1,1}.
    \end{equation}
    For $u\in \Gamma_{\vartheta,\pm 1}(S|_{W^\pm})$, the formula \eqref{spectral2} implies
    \begin{equation}\label{square1}
        \int_{W^\pm}\babs{\calb^{W^\pm}_{\pm 1,\pm 1}\,u}^2
        \ge \int_{W^\pm}|u|^2 + \int_{\partial W^\pm}|u|^2.
    \end{equation}
    It follows from \eqref{square1} that (this is exactly \cite[Lem.~3.7]{cz24})
    \begin{equation}\label{zeroindex}
        \ind \calb^{W^\pm}_{\pm 1,\pm 1} = 0.
    \end{equation}

    Recall that the Clifford action on $\mu\otimes\xi$ is defined to be $c^{\,\mu}\otimes\tau^\xi$. For the splitting $\mu\otimes\xi = (\mu\otimes\xi^+)\oplus(\mu\otimes\xi^-)$, the Dirac operator $D^{\mu\otimes\xi}$ can be written as
    \begin{equation}\label{z2grading2}
        D^{\mu\otimes\xi} = D^{\mu\otimes\xi^+} \oplus (-D^{\mu\otimes\xi^-}),
    \end{equation}
    where $D^{\mu\otimes\xi^\pm}$ are the Dirac operators on the Clifford modules $\mu\otimes\xi^\pm$ with Clifford actions $c^{\,\mu}\otimes \id_{\xi^\pm}$; compare with \cref{twi}. It follows from \eqref{binfty} and \eqref{z2grading2} that
    \begin{align}\label{partindex}
        \begin{split}
            \ind \calb^{W^\infty}_{\psi_\infty}
            &= 
            \ind\ket{D^{\mu\otimes\xi} + \sqrt{-1}\psi_\infty}\\
            & = \ind\ket{D^{\mu\otimes\xi^+} + \sqrt{-1}\psi_\infty} + \ind\ket{-D^{\mu\otimes\xi^-} + \sqrt{-1}\psi_\infty}\\
            & = \ind\ket{D^{\mu\otimes\xi^+} + \sqrt{-1}\psi_\infty} + \ind\ket{D^{\mu\otimes\xi^-} - \sqrt{-1}\psi_\infty}\\
            &= 
            \ind\ket{D^{\mu\otimes\xi^+} + \sqrt{-1}\psi_\infty} - \ind\ket{D^{\mu\otimes\xi^-} + \sqrt{-1}\psi_\infty}.
        \end{split}
    \end{align}
    Let $D^{\mu_\partial \otimes \xi^\pm}$ be the Dirac operators on the Clifford modules $\mu_\partial \otimes \xi^\pm$ with Clifford actions $c^{\,\mu_\partial}\otimes \id_{\xi^\pm}$. Applying the partitioned index theorem \cite[Cor.~1.9]{an93} to the Clifford modules $\mu\otimes\xi^\pm$ separately, we have\footnote{\cite[Cor.~1.9]{an93} implies that $\ind\ket{D^{\mu\otimes\xi^\pm} + \sqrt{-1}\psi_\infty} = \ind (D^{\mu_\partial\otimes\xi^\pm}|_{\partial_+W})$, where the latter index is determined by the $\bbz_2$-grading associated with the \emph{outward} unit normal; see the discussions preceding \cite[Thm.~1.5]{an93}. By the cobordism theorem \cite[Cor.~21.6]{bw}, we have $\ind (D^{\mu_\partial\otimes\xi^\pm}|_{\partial_+W}) = \ind (D^{\mu_\partial\otimes\xi^\pm}|_{\partial_-W})$, where the latter index is determined by \eqref{z2grading}.}
    \begin{equation}\label{partition2}
        \ind\ket{\ket{D^{\mu \otimes \xi^\pm} + \sqrt{-1}\psi_\infty}|_{W^\infty}} = \ind (D^{\mu_\partial \otimes \xi^\pm}|_{\partial_-W}).
    \end{equation}
    By \eqref{homotopy}, \eqref{splittingindex}, \eqref{zeroindex} and \eqref{partindex}--\eqref{partition2}, we have
    \begin{align}\label{ind10}
        \begin{split}
            \ind \calb^W_{h,\psi,T,s} &= \ind \calb^{W^\infty}_{\psi_\infty}\\
            &=
            \ind (D^{\mu_\partial \otimes\xi^+}|_{\partial_-W}) - \ind (D^{\mu_\partial \otimes\xi^-}|_{\partial_-W})\\
            &= \ind \ket{D^{\mu_\partial \widehat{\otimes}\xi}|_{\partial_-W}},
        \end{split}
    \end{align}
    where the last equality follows from \eqref{z2grading3}--\eqref{z2grading5}.
\end{proof}

\section{A Lichnerowicz-Type Formula for Almost Isometric Foliations}\label{3}

In \cref{3.1.1}, we first establish a Bochner--Weitzenböck identity for the partial sum $\sum_{j=1}^{q}c(e_j)\nabla_{e_j}$, analogous to the classical formula for $D = \sum_{j=1}^{n}c(e_j)\nabla_{e_j}$: $D^2 = -\Delta +\calr$. We then analyze its asymptotic behavior in the adiabatic limit for almost isometric foliations in \cref{3.1.2}. The main results of this section are \cref{part,partcurva}, as well as \cref{part3,partcur1}.

\subsection{Partial Sum of the Dirac Operator}\label{3.1.1}

We assume the following setting (where $F$ is not necessarily integrable):
\begin{set}\label{3.1}
    Let $(M,g^{TM})$ be a Riemannian manifold with the orthogonal splitting
    \begin{equation}\label{osm0}
        TM = F\oplus F^\perp,
        \quad
        g^{TM} = g^F \oplus g^{F^\perp}.
    \end{equation}
    Let $p\colon TM\to F$ and $p^\perp \colon TM\to F^\perp$ be the orthogonal projections. Set $n = \dim M$ and $q= \rk F$. Let $\{{e}_j\}_{j=1}^n$ denote a local orthonormal frame of $TM$ such that $\{{e}_j\}_{j=1}^q$ spans $F$ and $\{{e}_j\}_{j=q+1}^n$ spans $F^\perp$. Let $(S,\nabla,c)$ be a real or complex Clifford module on $(M,g^{TM})$.
    
    If $\partial M\ne \varnothing$, further assume $F$ is transverse to $\partial M$ and $F^\perp|_{\partial M} \subseteq T\partial M$. Suppose $e_1=\nu$ is the inward unit normal on $\partial M$ with respect to $g^{TM}$. Then $\nu \in \Gamma(F|_{\partial M})$.
\end{set}

Recall that $R= \nabla^2$ is the curvature of the connection $\nabla$ on $S$. On $M$, we have the following well-defined partial sums (distinguished by the subscript $q$):
\begin{align}\label{partterm}
    \begin{split}
        &D_q \coloneqq \sum_{j=1}^{q}c(e_j)\nabla_{e_j},\quad
        \Delta_q \coloneqq \sum_{j=1}^{q}\bbac{\nabla_{e_j}\nabla_{e_j} - \nabla_{p\nabla^{TM}_{e_j}e_j}}\\
        \text{and}\quad&
        \calr_q \coloneqq \frac{1}{2}\sum_{i,j=1}^{q}c(e_i)\,c(e_j)\,R(e_i,e_j).
    \end{split}
\end{align}

By \eqref{bdclif}, we have the Clifford module $(S|_{\partial M},\nabla^\partial,c^\partial)$ on $\partial M$. For the local orthonormal frame $\{\nu,{e}_2,\dots,e_{q}\}$ of $F|_{\partial M}$, we have the well-defined partial sums
\begin{equation}\label{partbd}
    D^\partial_q \coloneqq \sum_{j=2}^{q}c^\partial(e_j)\nabla_{e_j}^\partial
    \quad\text{and}\quad
    H^{TM}_q \coloneqq -\frac{1}{q-1}\sum_{j=2}^{q} \binn{\nabla^{TM}_{e_j}\nu,e_j}
\end{equation}
on $\partial M$. It follows from \eqref{a1}--\eqref{a3}, \eqref{bdclif} and $\inn{\nu,\nabla^{TM}_{e_j}\nu}=0$ that
\begin{align}\label{bdcnu}
    \begin{split}
        D^\partial_q c(\nu) 
        &= 
        \sum_{j=2}^{q}c^\partial(e_j)\bbket{\nabla_{e_j} + \frac{1}{2}c(\nabla^{TM}_{e_j}\nu)\,c(\nu)}c(\nu)\\
        &= 
        \sum_{j=2}^{q}c^\partial(e_j)\bbket{c(\nu)\nabla_{e_j} + c(\nabla^{TM}_{e_j}\nu) - \frac{1}{2}c(\nabla^{TM}_{e_j}\nu)}\\
        &= 
        \sum_{j=2}^{q}c^\partial(e_j)\,c(\nu)\bbket{\nabla_{e_j} - \frac{1}{2}c(\nu)\,c(\nabla^{TM}_{e_j}\nu)}
        =
        - c(\nu)D^\partial_q.
    \end{split}
\end{align}

    If we further assume \cref{setdd}, then we have $D^\partial_{q}\vartheta = \vartheta D^\partial_{q}$ by \eqref{c6}.     
    Thus, $D^\partial_{q} \chi = -\chi D^\partial_{q}$. As in \eqref{bd1}, we have for any $u\in \Gamma_{\vartheta,s}(S)$ that
    \begin{equation}\label{zero1}
        \binn{u|_{\partial M},D^\partial_{q} (u|_{\partial M})}= 0.
     \end{equation}

    We adopt the convention that the integral $\int_{\partial M}$ vanishes if $\partial M$ is empty, and the empty sum $\sum_{j=n+1}^{n}$ is zero. The following proposition generalizes \eqref{lic}.
\begin{prop}\label{part}
    Under \cref{3.1}, we have for all $u\in \Gamma_\cm(S)$ that
    \begin{align}\label{part2}
        \begin{split}
            \int_M |D_q u|^2 =  \int_M \bbbac{\sum_{j=1}^{q}&|\nabla_{e_j} u|^2 + \inn{u,\calr_q u}} + I_1 + I_2 +I_3\\
            &+ \int_{\partial M} \bbinn{u,\bbket{\frac{q-1}{2}H^{TM}_q - D^\partial_q} u} + I_4,
        \end{split}
    \end{align}
    where 
    \begin{align}
        I_1 &=\int_M\bbinn{u,c\bbket{\sum_{j=1}^{q} p^\perp\nabla^{TM}_{e_j}e_j - \sum_{k=q+1}^{n}p\nabla^{TM}_{e_k}e_k} D_qu},\label{aa1}\\
        I_2 &= \int_M\bbinn{u,\frac{1}{2}\sum_{j,k=1}^{q}c(e_j)c(e_k)\nabla_{p^\perp[e_j,e_k]}u - \sum_{j=1}^{q}\sum_{k=q+1}^{n}c(e_j)c(e_k)\nabla_{p\nabla^{TM}_{e_j}e_k}u},\label{aa2}\\
        I_3 & = -\sum_{k=q+1}^{n} \int_M\binn{u,\nabla_{p\nabla^{TM}_{e_k}e_k}u},\label{aa3}\\
        I_4 &= \frac{1}{2}\sum_{j=2}^{q}\,\sum_{k=q+1}^{n}\int_{\partial M}\binn{u,c(e_j)c(e_k)u}\binn{\nabla^{TM}_{e_j}\nu,e_k}.\label{aa4}
    \end{align}
\end{prop}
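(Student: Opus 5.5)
The plan is to imitate the standard derivation of \eqref{lic}, keeping track of the error terms created by truncating to $j\le q$. I would work pointwise first and integrate at the end.

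\textbf{Step 1: a pointwise Weitzenböck identity for $D_q^2$.} Expand
\begin{equation*}
D_q^2=\sum_{i,j=1}^q c(e_i)c(e_j)\nabla_{e_i}\nabla_{e_j}+\sum_{i,j=1}^q c(e_i)c(\nabla^{TM}_{e_i}e_j)\nabla_{e_j},
\end{equation*}
using \eqref{a3}. In the first sum, split into the diagonal and antisymmetric parts. The diagonal part gives $-\sum_j\nabla_{e_j}\nabla_{e_j}$. The antisymmetric part gives
\begin{equation*}
\tfrac12\sum_{i,j}c(e_i)c(e_j)\bigl(R(e_i,e_j)+\nabla_{[e_i,e_j]}\bigr).
\end{equation*}
Write $[e_i,e_j]=\nabla^{TM}_{e_i}e_j-\nabla^{TM}_{e_j}e_i$. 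Its $p$-part combines with the second sum: expanding $\nabla^{TM}_{e_i}e_j$ in the frame $\{e_k\}_{k\le q}$ and using antisymmetry of the connection coefficients $\inn{\nabla_{e_i}e_j,e_k}$ for $j,k\le q$, this produces $\nabla_{p\nabla^{TM}_{e_j}e_j}$ together with $\calr_q$. Its $p^\perp$-part is the term $\frac12\sum c(e_j)c(e_k)\nabla_{p^\perp[e_j,e_k]}$ appearing in $I_2$.

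The second sum also contains $c(p^\perp\nabla^{TM}_{e_i}e_j)$-type pieces. Using $\inn{\nabla_{e_i}e_j,e_k}=-\inn{e_j,\nabla_{e_i}e_k}$ for $k>q$, I rewrite these as $-\sum_{j\le q<k}c(e_j)c(e_k)\nabla_{p\nabla^{TM}_{e_j}e_k}$. This gives the second piece of $I_2$.

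\textbf{Step 2: pass to the integrated form.} Take the $\rml^2$ pairing with $u$, and integrate $\langle u,D_q^2u\rangle$ and $\langle u,-\Delta_q u\rangle$ by parts. The tool is the divergence theorem applied to vector fields such as
\begin{equation*}
X=\sum_{j\le q}\inn{u,c(e_j)D_qu}\,e_j \quad\text{and}\quad Y=\sum_{j\le q}\inn{u,\nabla_{e_j}u}\,e_j.
\end{equation*}
Their divergences involve $\sum_{j\le q}\inn{\nabla_{e_j}e_j,\cdot}$ and $\sum_{k>q}\inn{\nabla_{e_k}e_j,e_k}=-\sum_{k>q}\inn{e_j,p\nabla_{e_k}e_k}$. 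These are the sources of the non-classical terms:
\begin{itemize}
\item the $F^\perp$-directional divergence contributions, together with $p^\perp\nabla_{e_j}e_j$ from the Laplacian and Dirac expansions, assemble into $I_1$ (paired with $D_qu$) and $I_3$ (paired with $\nabla u$);
\item in the full case $q=n$, the mean curvature vectors $\sum_{j\le q}p^\perp\nabla_{e_j}e_j$ and $\sum_{k>q}p\nabla_{e_k}e_k$ vanish identically, consistent with \eqref{lic}.
\end{itemize}

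\textbf{Step 3: the boundary terms.} Since $F^\perp|_{\partial M}\subseteq T\partial M$ and $e_1=\nu$, only the $e_1$-component of $X$ and $Y$ contributes on $\partial M$. This gives
\begin{equation*}
-\int_{\partial M}\inn{u,(c(\nu)D_q+\nabla_\nu)u}.
\end{equation*}
I would then prove a partial analogue of \eqref{ddh},
\begin{equation*}
D^\partial_q u=\tfrac{q-1}{2}H^{TM}_q u-(c(\nu)D_q+\nabla_\nu)u+(\text{extra}),
\end{equation*}
by expanding \eqref{partbd} with \eqref{bdclif}:
\begin{equation*}
c^\partial(e_j)\nabla^\partial_{e_j}=c(e_j)c(\nu)\nabla_{e_j}+\tfrac12c(e_j)c(\nu)c(\nabla^{TM}_{e_j}\nu)c(\nu).
\end{equation*}
Decompose $\nabla^{TM}_{e_j}\nu$ into its components along $e_i$ with $2\le i\le q$ and along $e_k$ with $k>q$. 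The $F$-part yields the $H^{TM}_q$ term, since the off-diagonal contributions cancel by symmetry of the second fundamental form restricted to $F\cap T\partial M$. The $F^\perp$-part leaves exactly the term $I_4$, involving $c(e_j)c(e_k)\inn{\nabla^{TM}_{e_j}\nu,e_k}$.

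\textbf{Main obstacle.} I expect the difficulty to lie in the bookkeeping of Steps 1–2: ensuring every non-tensorial remainder lands in exactly the combination $I_1+I_2+I_3$ as stated, with correct signs. I would verify this by checking the case $F=TM$, where all the $I_i$ must vanish, and then tracking only the terms that mix the index ranges $j\le q$ and $k>q$.
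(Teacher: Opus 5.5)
Your proposal is essentially the paper's proof. The paper uses exactly your vector fields $X,Y$ (its $X_1,X_2$ in \eqref{divergence}) and the same pointwise expansion of $D_q^2$ leading to \eqref{licq}. It obtains the boundary identity \eqref{bdint} the same way, by splitting $\nabla^{TM}_{e_j}\nu$ into $F$- and $F^\perp$-components and killing the off-diagonal $F$-part via $\langle \nu,[e_j,e_k]\rangle=0$.

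One sign in your Step 3 needs fixing. The pointwise identities \eqref{f0} and \eqref{f3} carry $-\diver X$ and $-\diver Y$. Since $\nu$ is inward, $-\int_M \diver(X+Y)=+\int_{\partial M}\langle u,(c(\nu)D_q+\nabla_\nu)u\rangle$, and it is this plus sign that enters the right-hand side of \eqref{part2]}.

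Your text writes this contribution as $-\int_{\partial M}\langle u,(c(\nu)D_q+\nabla_\nu)u\rangle$. If that is meant as the term in \eqref{part2}, then combining it with your formula for $D^\partial_q$ yields the negatives of the stated boundary term and of $I_4$.

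A smaller bookkeeping point concerns Step 2. The $p^\perp\nabla^{TM}_{e_j}e_j$ term in $I_1$ comes only from $\nabla_{e_j}\bigl(c(e_j)D_qu\bigr)$. The Laplacian contributes no $p^\perp$ piece, since $\Delta_q$ is defined with $p\nabla^{TM}_{e_j}e_j$. Its $\nabla_{p\nabla^{TM}_{e_j}e_j}$ term simply cancels the $k\le q$ part of $\diver Y$, and what remains is exactly $I_3$.
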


The proof is similar to that of \eqref{lic} and the details can be found in \cref{app}. The primary difference is the absence of symmetry, which means some terms no longer cancel and there are the remainder terms $I_1$ to $I_4$.

For $X\in \Gamma(F)$, let $P^q_X$ be the \emph{Penrose operator along $F$} defined by
\begin{equation}\label{pen}
    P^q_X = \nabla_X + \frac{1}{q}c(X)D_q.
\end{equation}
The formula \eqref{part2} can be refined by the following lemma; see \cref{part3}.
\begin{lem}\label{lem2}
    Under \cref{3.1}, we have
    \begin{equation}\label{spe3}
        \sum_{j=1}^{q}|\nabla_{e_j}u|^2 =  \sum_{j=1}^{q}|P^q_{e_j}u|^2 + \frac{1}{q}|D_q u|^2.
    \end{equation}
\end{lem}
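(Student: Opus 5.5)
This identity is a pointwise algebraic computation: we expand the right-hand side using the definition \eqref{pen} of the Penrose operator and show that the cross terms combine exactly with $\frac1q|D_qu|^2$. Fix $x\in M$ and the local orthonormal frame $\{e_j\}_{j=1}^q$ of $F$ from \cref{3.1}. For each $j$ we have
\begin{equation*}
    |P^q_{e_j}u|^2 = |\nabla_{e_j}u|^2 + \frac{2}{q}\,\mathrm{Re}\binn{\nabla_{e_j}u,c(e_j)D_qu} + \frac{1}{q^2}|c(e_j)D_qu|^2 .
\end{equation*}

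For the last term, \eqref{a1} gives $c(e_j)^*c(e_j) = -c(e_j)^2 = |e_j|^2 = 1$, since $c(e_j)$ is skew-adjoint. Hence $|c(e_j)D_qu|^2 = |D_qu|^2$, and summing over $j=1,\dots,q$ gives $\frac1q|D_qu|^2$.

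For the cross term, we again use skew-adjointness: $\inn{\nabla_{e_j}u,c(e_j)D_qu} = -\inn{c(e_j)\nabla_{e_j}u,D_qu}$. Summing over $j$ and recalling the definition \eqref{partterm} of $D_q$, the sum of cross terms equals $-\frac{2}{q}|D_qu|^2$.

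Adding the three contributions,
\begin{equation*}
    \sum_{j=1}^q|P^q_{e_j}u|^2 = \sum_{j=1}^q|\nabla_{e_j}u|^2 - \frac{2}{q}|D_qu|^2 + \frac{1}{q}|D_qu|^2 ,
\end{equation*}
which rearranges to \eqref{spe3}.

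There is no real obstacle here. The only points to check are that skew-adjointness of $c(e_j)$ gives the sign in the cross term, and that $c(e_j)$ acts isometrically because $|e_j|=1$. The argument works verbatim for real or complex Clifford modules, with $\mathrm{Re}$ dropped in the real case. It also does not use integrability of $F$, the boundary, or any derivative of the frame, since everything is evaluated pointwise.
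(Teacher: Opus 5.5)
Your proposal is correct and follows the paper's own proof: the paper likewise expands $|P^q_{e_j}u|^2=|\nabla_{e_j}u+\frac{1}{q}c(e_j)D_qu|^2$ pointwise and uses the skew-adjointness and isometry of $c(e_j)$ to get the cross terms $-\frac{2}{q}|D_qu|^2$ and the square term $\frac{1}{q}|D_qu|^2$.
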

\begin{proof}
    This is similar to the case of $q=n$ (cf.\ \cite[(5.8)]{bhm15}). We have
    \begin{equation}
            \sum_{j=1}^{q}|P^q_{e_j}u|^2 =\sum_{j=1}^{q}|\nabla_{e_j}u|^2 - \frac{2}{q}|D_q u|^2 + \frac{1}{q}|D_q u|^2 = \sum_{j=1}^{q}|\nabla_{e_j}u|^2 - \frac{1}{q}|D_q u|^2
    \end{equation}
    by directly expanding $|P^q_{e_j}u|^2= |\nabla_{e_j}u + \frac{1}{q}c(e_j)D_qu|^2$.
\end{proof}

For the Levi-Civita connection $\nabla^{TM}$ of $g^{TM}$, let $R^{TM} = (\nabla^{TM})^2$ be the Riemannian curvature tensor. We define $R^{TM}_{ijkl} = -\inn{R^{TM}(e_i,e_j)\,e_k,e_l}$. When $M$ is spin and $S = S(TM)$ is the Hermitian bundle of spinors, recall that
\begin{align}\label{cur-3}
    \begin{split}
        \calr_q &= \frac{1}{2}\sum_{i,j=1}^{q}c(e_i)\,c(e_j)\,R^{S(TM)}(e_i,e_j)\\
        &= -\frac{1}{8}\sum_{i,j=1}^{q}\sum_{k,l=1}^{n} R^{TM}_{ijkl}\,c(e_i)\,c(e_j)\,c(e_k)\,c(e_l).
    \end{split}
\end{align}
When $q=n$ and $S = S(TM)$, Lichnerowicz \cite{lic63} proved 
\begin{equation}\label{lichnerowicz}
\calr_n = \frac{k^{TM}}{4},
\end{equation}
where $k^{TM}$ is the scalar curvature of $g^{TM}$. The following proposition extends \eqref{lichnerowicz}.

\begin{prop}\label{partcurva}
    Under \cref{3.1}, we have 
    \begin{align}\label{subcur}
        \begin{split}
            -\frac{1}{8}\sum_{i,j=1}^{q} \sum_{k,l=1}^{n} R^{TM}_{ijkl}&\,c(e_i)\,c(e_j)\,c(e_k)\,c(e_l)\\
            =
            \frac{1}{4}\sum_{i,j=1}^{q}&R^{TM}_{ijij} -\frac{1}{2}\sum_{i,j=1}^{q}\,\sum_{l=q+1}^{n}R^{TM}_{ijil}\,c(e_j)\,c(e_l)\\
            &- \frac{1}{8}\sum_{i,j=1}^{q}\,\sum_{k,l=q+1}^{n} R^{TM}_{ijkl}\,c(e_i)\,c(e_j)\,c(e_k)\,c(e_l).
        \end{split}
    \end{align}
\end{prop}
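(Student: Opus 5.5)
We will split the sum over $k,l\in\{1,\dots,n\}$ according to whether $k,l$ lie in $\{1,\dots,q\}$ or in $\{q+1,\dots,n\}$, and treat each block with Clifford relations \eqref{a1} and the symmetries of $R^{TM}_{ijkl}$: antisymmetry in $(i,j)$ and in $(k,l)$, and the first Bianchi identity. The left side of \eqref{subcur} breaks into four blocks:
\begin{enumerate}[label={\rm{(\alph*)}}]
    \item $k,l\le q$;
    \item $k\le q<l$;
    \item $l\le q<k$;
    \item $k,l>q$.
\end{enumerate}
Block (d) is exactly the last term of \eqref{subcur}, so nothing needs to be done there.

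For block (a), the indices $i,j,k,l$ all range over $\{1,\dots,q\}$, and the computation is literally Lichnerowicz's argument on the $q$-dimensional index set. First, use the Bianchi identity to kill the terms where $i,j,k$ are distinct, via cyclic summation of $c(e_i)c(e_j)c(e_k)$. Then reduce the remaining terms with $k=i$ or $k=j$ using \eqref{a1}. This yields $\frac14\sum_{i,j=1}^q R^{TM}_{ijij}$. Note that the Bianchi step only uses cyclic permutations of $(i,j,k)$ with $l$ fixed, and all three indices stay in $\{1,\dots,q\}$, so the argument goes through unchanged. I would verify the sign convention by checking the case $q=n$ against \eqref{lichnerowicz}.

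For blocks (b) and (c), use $R^{TM}_{ijlk}=-R^{TM}_{ijkl}$ together with $c(e_k)c(e_l)=-c(e_l)c(e_k)$ for $k\neq l$ to see that (c) equals (b). This gives
\[
-\frac14\sum_{i,j,k=1}^{q}\sum_{l=q+1}^{n}R^{TM}_{ijkl}\,c(e_i)c(e_j)c(e_k)c(e_l).
\]
Now $l$ is fixed outside the range of $i,j,k$. Apply the Bianchi identity in $(i,j,k)$:
\begin{itemize}
    \item The fully distinct triples of $\{i,j,k\}\subseteq\{1,\dots,q\}$ contribute zero, since $c(e_i)c(e_j)c(e_k)$ is totally antisymmetric for distinct indices.
    \item Terms with $i=j$ vanish by antisymmetry of $R^{TM}$ in $(i,j)$.
    \item The remaining terms have $k=i\ne j$ or $k=j\ne i$. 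Using $c(e_i)c(e_j)c(e_i)=c(e_j)$ for $i\neq j$, and $c(e_i)c(e_j)c(e_j)=-c(e_i)$, these two cases give equal contributions after relabeling with the $(i,j)$-antisymmetry.
\end{itemize}
The result is $-\frac12\sum_{i,j\le q}\sum_{l>q}R^{TM}_{ijil}\,c(e_j)c(e_l)$, which is the middle term of \eqref{subcur}.

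The main obstacle is the bookkeeping of signs and factors of $2$ in the mixed blocks (b)+(c), in particular matching the convention $R^{TM}_{ijkl}=-\inn{R^{TM}(e_i,e_j)e_k,e_l}$ with the index placement $R^{TM}_{ijil}$. I would fix this by testing the resulting identity on the case $q=n$, where the middle and last terms vanish and \eqref{subcur} must reduce to \eqref{lichnerowicz}.
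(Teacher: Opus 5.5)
Your proposal is correct and follows the paper's proof. The paper also splits the sum by the range of $k,l$ and uses the first Bianchi identity with $l$ fixed to kill the distinct triples $i,j,k\le q$, which is \eqref{distinct}. It then gets $\frac14\sum_{i,j\le q}R^{TM}_{ijij}$ from the block $k,l\le q$ in \eqref{a15}, and in \eqref{a16} identifies the two mixed blocks and reduces them to $2\sum_{l>q}\sum_{i,j\le q}R^{TM}_{ijil}\,c(e_j)\,c(e_l)$, exactly as you describe.
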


A detailed proof can be found in Appendix \ref{app}; see \eqref{distinct}--\eqref{a16}. Again, the presence of the additional terms in \eqref{subcur} is due to the loss of symmetry. 

\subsection{Adiabatic Limits on Almost Isometric Foliations}\label{3.1.2}
On almost isometric foliations in the sense of Connes \cite[\S3]{connes} (see \cref{aliso}), it turns out that \eqref{part2} and \eqref{subcur} exhibit well-behaved adiabatic limits. We first recall some definitions. 

Suppose $M$ is a smooth manifold with boundary. Let $F\subseteq TM$ be an integrable subbundle transverse to $\partial M$ equipped with a smooth metric $g^F$. 

For any $x\in M$, let $\scrl_x$ denote the unique leaf passing through $x$, which is a smooth immersed submanifold such that $F|_{\scrl_x} = T\scrl_x$. 
The \emph{leafwise scalar curvature} $k^F \in C^\infty(M)$ with respect to $g^F$ is defined by
\begin{equation}\label{k1}
    k^F(x) =  k^{T\scrl_x}(x)
\end{equation}
for $x\in M$, where $k^{T\scrl_x}$ is the scalar curvature of $\scrl_x$ with respect to $g^F|_{\scrl_x}$. 

Since $F$ is transverse to $\partial M$, it induces a foliation on $\partial M$ with the same codimension \cite[Ex.~1.2.21]{cc}. For $x\in\partial M$, the connected components of $\scrl_x \cap \partial M$ are leaves in $\partial M$, and the immersed submanifold $\scrl_x$ has boundary $\scrl_x\cap \partial M$. We define the \emph{leafwise boundary mean curvature} $H^F\in C^\infty(\partial M)$ with respect to $g^F$ by
\begin{equation}\label{h1}
    H^F(x) = H^{T\scrl_x}(x)
\end{equation}
for $x\in\partial M$, where $H^{T\scrl_x}(x)$ is the mean curvature of $\scrl_x$ at $x$ with respect to the inward unit normal; see \eqref{h100}. 

Next, we take a splitting $TM = F \oplus F^\perp$, and let $p^\perp\colon TM\to F^\perp$ be the natural projection. Then $F^\perp$ is isomorphic to the normal bundle $TM/F$. Following \cite{bo68}, we define the Bott connection $\nabla^{F^\perp}$ to be any connection on $F^\perp$ such that for $X\in \Gamma(F)$ and $Y\in \Gamma(F^\perp)$, we have
\begin{equation}\label{bot}
    \nabla^{F^\perp}_X Y = p^\perp [X,Y].
\end{equation} 

Let $G$ denote the holonomy groupoid of the foliated manifold $(M,F)$, which consists of equivalence classes $[\gamma]$ of piecewise smooth curves $\gamma\colon [0,1]\to M$ with images contained in a single leaf; see \cite{wink83}. Two piecewise smooth curves $\gamma,\bar{\gamma}$ are equivalent if $\gamma(0) = \bar{\gamma}(0), \gamma(1) = \bar{\gamma}(1)$, and they have the same holonomy element $F^\perp_{\gamma(0)} \to F^\perp_{\gamma(1)}$ defined by the parallel transport with respect to the Bott connection $\nabla^{F^\perp}$. The action of $[\gamma]\in G$ on $F^\perp_{\gamma(0)}$ is defined to be the holonomy element $F^\perp_{\gamma(0)}\to F^\perp_{\gamma(1)}$ along $\gamma$ with respect to $\nabla^{F^\perp}$.

Consider a further splitting $F^\perp = \fp \oplus \ffp$. Let $g^{TM}$ be a Riemannian metric on $M$ defined by the orthogonal splitting
\begin{equation}\label{osm}
    TM = F\oplus \fp \oplus \ffp,\quad
    g^{TM} = g^F \oplus g^\fp \oplus g^\ffp.
\end{equation}
Set $q=\rk F$, $m=\rk(F\oplus \fp)$ and $n=\dim M$. Recall the following definition.

\begin{defn}[{\cite[\S3]{connes}\,\cite[Def.~A.1]{lz}}]\label{aliso}
    We say the foliated manifold $(M,F)$ is \emph{almost isometric} with respect to $\fp\oplus \ffp$ and $g^\fp \oplus g^\ffp$ (or with respect to \eqref{osm}) if the action of the holonomy groupoid $G$ on $\fp \oplus \ffp$ takes the form
    \begin{equation}\label{aliso1}
        \begin{pmatrix}
            O_1 & 0\\
            A & O_2
        \end{pmatrix},
    \end{equation}
    where $O_1 \in \ortho(m-q)$ and $O_2 \in \ortho(n-m)$ are orthogonal matrices, and $A$ is an $(n-m)\times (m-q)$ matrix.
\end{defn}

\begin{exam}\label{aliso2}
    If $(M,F)$ is almost isometric with respect to \eqref{osm}, then $(M \times \bbs^1, \pro^*_M F)$ is also almost isometric with respect to each of the following splittings:
    \begin{align}
        \pro_M^*\fp\oplus (\pro_M^*\ffp\oplus T\bbs^1) 
        &\quad\text{and}\quad 
        \pro_M^*g^\fp \oplus (\pro_M^*g^\ffp\oplus \di\theta^2), \\
        (\pro_M^*\fp\oplus T\bbs^1) \oplus \pro_M^*\ffp 
        &\quad\text{and}\quad
        (\pro_M^*g^\fp\oplus \di\theta^2) \oplus \pro_M^*g^\ffp,
    \end{align}
    where $\pro_M \colon M\times\bbs^1 \to M$ is the natural projection and $\theta\in \bbr/2\pi\bbz$. This is because $\frac{\partial}{\partial \theta}\in T\bbs^1$ is parallel along any curve in a single leaf of $\pro_M^*F$ with respect to the Bott connection on $\fp\oplus \ffp\oplus T\bbs^1$.
\end{exam}

Next, we will consider a family of rescaled Riemannian metrics $g^{TM}_\bv$ on $M$.

\begin{set}\label{3.3}
    Assume \eqref{osm}. Suppose $F,\ffp \subseteq TM$ are integrable subbundles and $(M,F)$ is almost isometric with respect to \eqref{osm}. If $\partial M\ne\varnothing$, further assume that $F$ is transverse to $\partial M$ and $(\fp\oplus\ffp)|_{\partial M} \subseteq T\partial M$. Let $\wnu\in \Gamma(F|_{\partial M})$ be the inward unit normal on $\partial M$ with respect to $g^{TM}$.
    
    Following \textnormal{\cite[(1.11)]{zhang17}}, let $g^{TM}_\bv$ be the rescaled Riemannian metrics defined by
    \begin{equation}\label{split1}
        g^{TM}_{\beta,\varepsilon} = \beta^2 g^{F} \oplus \dfrac{g^{\fp}}{\varepsilon^2} \oplus g^{\ffp}\qquad (0<\beta,\varepsilon\le 1).
    \end{equation}
    Let $\{\we_j\}_{j=1}^n$ be a local orthonormal frame on $(M,g^{TM})$ such that $\{\we_j\}_{j=1}^q$, $\{\we_j\}_{j=q+1}^m$ and $\{\we_j\}_{j=m+1}^n$ span the subbundles $F$, $\fp$ and $\ffp$ respectively. We define a local orthonormal frame $\{e_j\}_{j=1}^n$ associated with $g^{TM}_{\beta,\varepsilon}$ by
    \begin{equation}\label{scale}
        e_j =
        \begin{cases}
            \beta^{-1}\we_j, & \text{if } 1\le j \le q,\\
            \varepsilon\we_j, & \text{if } q+1\le j \le m,\\
            \we_j, & \text{if } m+1\le j \le n.
        \end{cases}
    \end{equation}
    On $\partial M$, we further require that $\we_1 = \wnu$, and define $\nu = \beta^{-1}\wnu$.
\end{set}

Following \cite{zhang17}, we add "$\bv$" as a subscript or superscript to indicate geometric objects associated with the metric $g^{TM}_\bv$. The omission of "$\bv$" corresponds to the standard case where $\beta = \varepsilon = 1$.

Write $\inn{\cdot,\cdot}_\bv$ for $g^{TM}_\bv$, and let $\nabla^{TM,\bv}$ be its Levi-Civita connection. The connection matrix $\omega^{\bv} = (\omega^{\bv}_{jk})$ of $\nbv$ is a matrix of $1$-forms defined by
\begin{equation}\label{ome2}
    \omega^{\bv}_{jk}(e_i) = \binn{\nbv_{e_i}e_j,e_k}_\bv.
\end{equation}
Changing the inner products in \cite[(1.19--1.27)]{zhang17} from $g^{TM}$ to $g^{TM}_\bv$, we have (the key ingredients  are the almost isometric property and the integrability of $F,\ffp$)
\begin{lem}[{\cite[Lem.~1.3]{zhang17}}]\label{esti}
    Assume \cref{3.3}. Decompose the square matrix $\omega^{\bv}$ into blocks according to the indices $\{1,\dots, q\}, \{q+1,\dots,m\}$ and $\{m+1,\dots,n\}$. For small $\beta,\varepsilon>0$, the block matrix $\omega^{\bv}(e_i)$ has asymptotic estimates
\begin{equation*}
\begin{minipage}{0.32\linewidth}
\centering
$1\le i \le q$
\begin{equation*}
    \begin{pNiceMatrix}[margin, columns-width=auto]
            O(\frac{1}{\beta})  & O(\varepsilon) & O(1)\\
            O(\varepsilon)  & O(\frac{1}{\beta})  & O(\frac{\varepsilon}{\beta})\\
            O(1) & O(\frac{\varepsilon}{\beta}) & O(\frac{1}{\beta})
            \CodeAfter
            \tikz \draw [dashed] (2-|1) -- (2-|4) (3-|1) -- (3-|4) (1-|2) -- (4-|2) (1-|3) -- (4-|3) ;
        \end{pNiceMatrix}
\end{equation*}
\end{minipage}
\hfill
\begin{minipage}{0.33\linewidth}
\centering
$q+1\le i \le m$
\begin{equation*}
    \begin{pNiceMatrix}[margin, columns-width=auto]
            O(\varepsilon)  & O{\scriptstyle(\beta\varepsilon^2)} & O(\frac{\varepsilon}{\beta})\\
            O{\scriptstyle(\beta\varepsilon^2)} & O(\varepsilon) & O(1)\\
            O(\frac{\varepsilon}{\beta}) & O(1) & O(\varepsilon)
            \CodeAfter
            \tikz \draw [dashed] (2-|1) -- (2-|4) (3-|1) -- (3-|4) (1-|2) -- (4-|2) (1-|3) -- (4-|3) ;
        \end{pNiceMatrix}
\end{equation*}
\end{minipage}
\hfill
\begin{minipage}{0.32\linewidth}
\centering
$m+1\le i \le n$
\begin{equation*}
    \begin{pNiceMatrix}[margin, columns-width=auto]
            O(1) & O(\frac{\varepsilon}{\beta}) & 0 \\
            O(\frac{\varepsilon}{\beta}) & O(1)  & O(\varepsilon) \\
            0 & O(\varepsilon) & O(1)
            \CodeAfter
            \tikz \draw [dashed] (2-|1) -- (2-|4) (3-|1) -- (3-|4) (1-|2) -- (4-|2) (1-|3) -- (4-|3) ;
        \end{pNiceMatrix}
\end{equation*}
\end{minipage}
\end{equation*}
    holding uniformly on any compact subset. In particular, we have $\omega^{\bv}_{jk}(e_i) = O(\frac{1}{\beta})$ for $1\le i \le q$, and $\omega^{\bv}_{jk}(e_i) = O(1+\frac{\varepsilon}{\beta})$ for $q+1\le i \le n$.
\end{lem}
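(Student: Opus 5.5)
The plan is to compute every entry of $\omega^{\bv}_{jk}(e_i)=\inn{\nbv_{e_i}e_j,e_k}_\bv$ directly from the Koszul formula \eqref{levi} for $g^{TM}_\bv$, and then sort the resulting terms by their powers of $\beta$ and $\varepsilon$. Since $\{e_j\}$ is $g^{TM}_\bv$-orthonormal, all inner products $\inn{e_j,e_k}_\bv$ are constant, so the derivative terms in \eqref{levi} drop out. What remains is
\begin{equation*}
2\omega^{\bv}_{jk}(e_i)=\inn{[e_i,e_j],e_k}_\bv-\inn{[e_i,e_k],e_j}_\bv-\inn{[e_j,e_k],e_i}_\bv .
\end{equation*}

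Each bracket can be rewritten in terms of the unscaled frame via \eqref{scale}, for instance $[e_i,e_j]=\beta^{-1}\varepsilon[\we_i,\we_j]$ for $i\le q<j\le m$. The $g^{TM}_\bv$-pairing with $e_k$ then multiplies the $g^{TM}$-component by a factor determined by the block of $k$. Concretely, $\inn{X,e_k}_\bv$ equals $\beta\inn{X,\we_k}$, $\varepsilon^{-1}\inn{X,\we_k}$ or $\inn{X,\we_k}$ according to whether $k$ lies in the $F$, $\fp$ or $\ffp$ block. On a compact set all the unscaled quantities $\inn{[\we_a,\we_b],\we_c}$ are $O(1)$. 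This naive bookkeeping, applied to each of the $27$ index-block combinations $(i,j,k)$, already produces most of the entries. For example, every $F$-$F$-$F$ term is $O(\beta^{-2}\cdot\beta)=O(\beta^{-1})$.

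The naive bound is too weak in exactly the places where structural vanishing must be used. Three facts supply it.
\begin{enumerate}[label={\rm{(\roman*)}}]
\item \emph{Integrability of $F$.} We have $\inn{[\we_a,\we_b],\we_c}=0$ for $a,b\le q<c$. This kills the potentially large terms $\beta^{-2}\varepsilon^{-1}$ and $\beta^{-2}$ in which two $F$-vectors are bracketed and paired transversally.
\item \emph{Integrability of $\ffp$.} We have $p^{F\oplus\fp}[\we_a,\we_b]=0$ for $a,b>m$. This gives the zero entries in the $(F,\ffp)$ block when $i>m$, together with the $O(\varepsilon)$ entries there.
\item \emph{Almost isometry (\cref{aliso}).} Differentiating the holonomy form \eqref{aliso1} shows that the Bott connection preserves $\ffp$ and is metric on $\fp$ and on $\ffp$ (the latter modulo $\fp$). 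In formulas, for $a\le q$:
\begin{itemize}
\item $p^{\fp}[\we_a,\we_b]=0$ for $b>m$;
\item $\inn{[\we_a,\we_b],\we_c}+\inn{[\we_a,\we_c],\we_b}=0$ when $b,c$ lie both in $\fp$ or both in $\ffp$.
\end{itemize}
These identities cancel the dangerous combinations in the transverse-transverse blocks for $i\le q$. The symmetric part that would be $O(\beta^{-1}\varepsilon^{-2})$ or $O(\beta^{-1})$ with a bad prefactor collapses, leaving $O(\beta^{-1})$ on the diagonal blocks and $O(\varepsilon/\beta)$ off the diagonal. They also kill the $O(\beta\varepsilon^{-1})$-type terms for $i$ transverse.
\end{enumerate}

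The order of work is as follows. First, list the three vanishing identities, deriving (iii) carefully from \eqref{aliso1} via the Bott connection \eqref{bot}. Second, compute blockwise the three Koszul terms for each of the $27$ block combinations. Third, use the antisymmetry $\omega_{jk}=-\omega_{kj}$ to halve the work. Uniformity on compact sets is automatic, since the unscaled frame and its brackets are smooth.

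The main obstacle is step (iii). We need the correct infinitesimal consequence of the lower-triangular holonomy, in particular that $p^{\fp}[X,Y]=0$ for $X\in\Gamma(F)$ and $Y\in\Gamma(\ffp)$, and we must check that these identities suffice for the delicate entries. Examples are the $O(\beta\varepsilon^2)$ entries for $q<i\le m$, where an $\varepsilon^{-1}$ factor must be cancelled, and the $O(\varepsilon/\beta)$ entry between $\fp$ and $\ffp$ for $i\le q$. Here I would follow \cite[(1.19--1.27)]{zhang17} term by term, replacing $g^{TM}$ by $g^{TM}_\bv$ as the paper indicates.
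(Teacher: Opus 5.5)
Your proposal is correct and follows the paper's route. The paper simply invokes \cite[Lem.~1.3]{zhang17} with $g^{TM}$ replaced by $g^{TM}_\bv$, which is exactly the Koszul formula \eqref{kos} in the rescaled frame \eqref{scale} followed by blockwise bookkeeping using the integrability of $F$ and $\ffp$ and the almost-isometry identities you list.

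When you carry it out, a few details in your sketch need correcting:
\begin{itemize}
\item The terms that need cancelling are of order $\beta^{-1}\varepsilon^{-1}$, which are killed by $p^{\fp}[\we_a,\we_b]=0$ for $a\le q<m<b$, and of order $\beta^{-1}$, which are killed by the two metric identities.
\item It is this $\beta^{-1}$ cancellation, not an $\varepsilon^{-1}$ cancellation, that yields the $O(\beta\varepsilon^2)$ entries.
\item The exact zeros for $i>m$ need the metric identity on $\ffp$ together with the integrability of $\ffp$, not integrability alone.
\item No terms of order $\beta^{-1}\varepsilon^{-2}$ or $\beta\varepsilon^{-1}$ ever arise.
\item It is the Bott connection on $\fp$, not on $\ffp$, that is metric only modulo the other summand. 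Your displayed identities are nonetheless correct.
\end{itemize}
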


\begin{prop}
    Under \cref{3.3}, if $\partial M\ne \varnothing$, then we have
    \begin{equation}\label{meancur}
        H^{TM}_{\bv;q}
        = \frac{H^{F}}{\beta}
        \quad \text{and} \quad
        H^{TM}_{\bv}
        =
        \frac{(q-1)H^{F}}{(n -1)\beta} + O(\beta\varepsilon^2)
    \end{equation}
    holding uniformly on any compact subset of $\partial M$ for sufficiently small $\beta, \varepsilon > 0$.
\end{prop}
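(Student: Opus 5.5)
We compute both mean curvatures directly from the definitions \eqref{h100} and \eqref{partbd}, using the rescaled frame \eqref{scale}. The boundary conditions $\we_1=\wnu$ and $\nu=\beta^{-1}\wnu$ fix the normal. First we check that $\nu$ is the inward unit normal for $g^{TM}_\bv$. Indeed, $(\fp\oplus\ffp)|_{\partial M}\subseteq T\partial M$, and $\wnu$ is $g^{TM}$-orthogonal to $T\partial M$. Hence $\wnu$ is orthogonal to $F\cap T\partial M$ and to $F^\perp$, and since the splitting \eqref{split1} is block diagonal this orthogonality persists for $g^{TM}_\bv$.

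\textbf{The leafwise term.} By \eqref{partbd},
\[
H^{TM}_{\bv;q} = -\frac{1}{q-1}\sum_{j=2}^{q}\binn{\nbv_{e_j}\nu,e_j}_\bv .
\]
Using the Koszul formula \eqref{levi} for $g^{TM}_\bv$ with $X=e_j$, $Y=\nu$, $Z=e_j$, all three vectors lie in $F$. Because $F$ is integrable, every bracket $[e_j,\nu]$ and $[e_j,e_j]$ lies in $F$. The metric restricted to $F$ is $\beta^2 g^F$, so every term in the Koszul formula involves only $\beta^2 g^F$ and derivatives along $F$. Thus $\inn{\nbv_{e_j}\nu,e_j}_\bv$ coincides with the corresponding quantity for the leaf metric $\beta^2 g^F|_{\scrl}$ at the boundary point of the leaf. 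Rescaling a metric by $\beta^2$ multiplies mean curvature by $\beta^{-1}$, and for $\beta=1$ this quantity is exactly the leafwise mean curvature \eqref{h1}. Hence $H^{TM}_{\bv;q}=H^F/\beta$ holds exactly.

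\textbf{The full mean curvature.} By \eqref{h100},
\[
(n-1)H^{TM}_\bv = (q-1)H^{TM}_{\bv;q} - \sum_{k=q+1}^{n}\binn{\nbv_{e_k}\nu,e_k}_\bv .
\]
It remains to show that each transverse term is $O(\beta\varepsilon^2)$. Write $\nu=\beta^{-1}\we_1$ with $e_1=\nu$ in the frame. Then $\inn{\nbv_{e_k}\nu,e_k}_\bv=\omega^\bv_{1k}(e_k)$.

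For $q+1\le k\le m$, Lemma~\ref{esti} directly gives $\omega^\bv_{1k}(e_k)=O(\beta\varepsilon^2)$; this is the $(F,\fp)$ block of the matrix for $q+1\le i\le m$.

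For $m+1\le k\le n$, Lemma~\ref{esti} only gives $O(1)$. Here we use the Koszul formula explicitly:
\[
2\omega^\bv_{1k}(e_k) = 2\inn{\nbv_{e_k}\nu,e_k}_\bv = 2\inn{[e_k,\nu],e_k}_\bv - \nu\inn{e_k,e_k}_\bv = 2\inn{[e_k,\nu],e_k}_\bv ,
\]
since $\inn{e_k,e_k}_\bv=1$. With $e_k=\we_k$, $\nu=\beta^{-1}\we_1$ and $g^\ffp$ unscaled, this becomes $2\beta^{-1}\inn{p^\ffp[\we_k,\we_1],\we_k}$. We expect this to vanish because of the almost isometric property. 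Indeed, $p^\perp[\we_1,\we_k]$ is the Bott connection $\nabla^{F^\perp}_{\we_1}\we_k$. The block form \eqref{aliso1} makes the induced holonomy on $\ffp$ orthogonal. So choosing $\we_k$ to be Bott-parallel along leaves (possible locally), the skew part vanishes, and the diagonal entry $\inn{\nabla^{F^\perp}_{\we_1}\we_k,\we_k}=\frac12\we_1 g^\ffp(\we_k,\we_k)$ is zero. This is precisely the source of the $0$ entries in Lemma~\ref{esti} for $m+1\le i\le n$.

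Uniformity on compact subsets follows from smooth dependence of the frame. Combining the three contributions gives
\[
H^{TM}_\bv = \frac{(q-1)H^F}{(n-1)\beta}+O(\beta\varepsilon^2).
\]
The main obstacle is the $\ffp$-term. The $\beta^{-1}$ prefactor there must be killed exactly by the almost isometric structure, so the holonomy argument has to be carried out carefully rather than read off from the crude estimates of Lemma~\ref{esti}.
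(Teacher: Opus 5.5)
Your proof is correct and follows the paper's route: $H^{TM}_{\bv;q}$ is the leafwise mean curvature for $\beta^2 g^F$ (because $p\nabla^{TM,\bv}p$ is the Levi-Civita connection of each leaf), and the transverse terms $\omega^\bv_{1k}(e_k)$, $q+1\le k\le n$, are controlled by \cref{esti}. One correction: for $m+1\le k\le n$, \cref{esti} does not give merely $O(1)$, since the $(F,\ffp)$ block of $\omega^\bv(e_i)$ for $m+1\le i\le n$ is exactly $0$; the paper simply reads this off, so your Koszul--Bott holonomy argument re-derives that entry (it is valid apart from a harmless sign on the vanishing term $\nu\inn{e_k,e_k}_\bv$), and your ``main obstacle'' is already handled by the lemma.
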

\begin{proof}
    Note that $p\nabla^{TM,\beta,\varepsilon}p$ is the Levi-Civita connection on each leaf for the metric $\beta^2g^F$, and the leafwise boundary mean curvature associated with $\beta^2g^F$ is $\frac{H^F}{\beta}$. Then
    \begin{equation}\label{meancur1}
        H^{TM}_{\bv;q}
        = -\frac{1}{q-1}\sum_{j=2}^{q}\ttbinn{\nabla^{TM,\bv}_{e_j}\nu,e_j}
        = \frac{H^{F}}{\beta}.
    \end{equation}
    By \cref{esti} and \eqref{ome2}, the formula
    \begin{align}\label{meancur2}
        \begin{split}
            H^{TM}_{\bv}
            &=\frac{(q-1)H^{F}}{(n -1)\beta} -\dfrac{1}{n-1}\sum_{j=q+1}^{n}\omega^\bv_{1j}(e_j)
            =
            \frac{(q-1)H^{F}}{(n -1)\beta} + O(\beta\varepsilon^2)
        \end{split}       
    \end{align}
    holds uniformly on any compact subset of $\partial M$ for sufficiently small $\beta, \varepsilon > 0$.    
\end{proof}

For convenience, we quote several results from \cite[\S1.2--1.3]{zhang17} (cf.\ \cite[\S1$\frac{7}{8}$]{g96} and \cite[\S6.5.2]{g23}). For $b=1,2$, let $\nabla^{F^\perp_b,\bv}$ be the Euclidean connections on $F^\perp_b$ defined by
\begin{equation}\label{defcon}
    \nabla^{F^\perp_b,\bv} = p_b^\perp\nabla^{TM,\bv}p_b^\perp.
\end{equation}

\begin{prop}\label{curvature10}
    Assume \cref{3.3}. Suppose $\bv>0$ are sufficiently small.
    
   \begin{enumerate}[label={\rm{(\arabic*)}}]
    \item \textup{\cite[(1.30)--(1.31)]{zhang17}} It holds uniformly on any compact subset of $M$ that
    \begin{align}\label{cur2}
        k^{TM,\bv}= \frac{k^F}{\beta^2} + O\bbbket{1+\frac{\varepsilon^2}{\beta^2}},
        \quad
        \sum_{i,j=1}^{q}\rbv_{ijij} = \frac{k^F}{\beta^2} + O(1).
    \end{align}
    \item \textup{\cite[(1.45)--(1.46)]{zhang17}} Suppose $q+1 \le k,l \le m$ for $b=1$ and $m+1 \le k,l \le n$ for $b=2$. It holds uniformly on any compact subset of $M$ that
    \begin{equation}\label{cur31}
        R^{F^\perp_b,\bv}_{ijkl} = 
        \begin{cases}
            O(\varepsilon^2),& \text{if } 1\le i,j\le q,\\
            O(\beta^{-1}),& \text{otherwise},
        \end{cases}
    \end{equation}
    where $R^{F^\perp_b,\bv}_{ijkl} = -\binn{R^{F^\perp_b,\bv}(e_i,e_j)\,e_k,e_l}_\bv$ and $R^{F^\perp_b,\bv}= \bket{\nabla^{F^\perp_b,\bv}}^2$.
   \end{enumerate}
\end{prop}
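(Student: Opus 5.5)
Both parts are quoted from \cite[\S1.2--1.3]{zhang17}, so the plan is to rederive them from the connection-form estimates of \cref{esti}, applied to the frame \eqref{scale} and the metrics $g^{TM}_\bv$ of \eqref{split1}.

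\emph{Setting up the curvature formula.} I write the curvature components in terms of the connection matrix $\omega^\bv$ via the structure equation
\begin{equation*}
R^{TM,\bv}_{ijkl} = -\bket{e_i(\omega^\bv_{kl}(e_j)) - e_j(\omega^\bv_{kl}(e_i)) - \omega^\bv_{kl}([e_i,e_j]) + \textstyle\sum_r(\omega^\bv_{kr}\wedge\omega^\bv_{rl})(e_i,e_j)}
\end{equation*}
(with the appropriate sign convention). Each term is then estimated using \cref{esti}. The derivative terms need extra care, since $e_i=\beta^{-1}\we_i$ for $i\le q$ introduces powers of $\beta^{-1}$. I also use that each entry $\omega^\bv_{kl}(e_j)$ is an explicit rational expression in $\beta,\varepsilon$ with coefficients that are smooth functions built from the fixed frame $\{\we_j\}$ through \eqref{levi}.

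\emph{Part (1).} For $1\le i,j\le q$ I split $\omega^\bv$ into its $F$-block and its off-diagonal blocks. The $F$-block restricted to $F$-directions is the Levi-Civita connection of the leaves for $\beta^2g^F$, because $F$ is integrable and $p\nabla^{TM,\bv}p$ is the leafwise Levi-Civita connection. By the Gauss equation, $\sum_{i,j\le q}R^{TM,\bv}_{ijij}$ is therefore the leafwise scalar curvature $k^F/\beta^2$ plus quadratic terms in the second fundamental form of the leaves. These are the entries $\omega^\bv_{jk}(e_i)$ with $i,j\le q$ and $k>q$, which by the first matrix of \cref{esti} are $O(\varepsilon)$ and $O(1)$, so they contribute $O(1)$. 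This gives the second formula in \eqref{cur2}.

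For the full scalar curvature $k^{TM,\bv}$ I add the mixed terms $R_{ikik}$, with one index in $F$ and one in $F^\perp$, and the transverse terms, with both indices in $F^\perp$. I bound each quadratic product of connection forms using \cref{esti}. The dangerous products have size $\beta^{-1}\cdot\beta^{-1}$, but they appear only in the diagonal blocks $O(\frac1\beta)$. These are skew, so they can be gauged away: by the almost isometric structure one can choose the frame so that they cancel in the combination entering $R_{ikik}$. The surviving cross terms are bounded by $(\frac{\varepsilon}{\beta})^2$ and $O(1)$, which yields $O(1+\frac{\varepsilon^2}{\beta^2})$.

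\emph{Part (2).} For $R^{F^\perp_b,\bv}$ I apply the same structure equation to the restricted connection \eqref{defcon}. Its connection forms are the diagonal $F^\perp_b$-blocks. The key input is that along leaves ($i,j\le q$) the Bott connection is flat on $F^\perp$ modulo the almost isometric form \eqref{aliso1}. Hence the leading $\beta^{-1}$ parts of $\omega^\bv(e_i)$ on the $F^\perp_b$-blocks come from a flat connection and contribute no curvature. The remaining contributions involve products of the off-diagonal blocks $O(\varepsilon)$ and $O(\frac\varepsilon\beta)$ against $O(\beta\varepsilon^2)$-type factors. After the cancellations this gives $O(\varepsilon^2)$. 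For the other index pairs, at least one $e_i$ is not rescaled by $\beta^{-1}$, and the crude bound $O(\frac1\beta)$ from \cref{esti} suffices.

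\emph{Main obstacle.} The hardest step is the $O(\varepsilon^2)$ bound in \eqref{cur31} for $i,j\le q$. It requires exactly separating the flat Bott part, which uses integrability of $F$ and $\ffp$ and the block-triangular holonomy \eqref{aliso1}. Naive estimates only give $O(\beta^{-2})$. I would follow \cite[(1.32)--(1.46)]{zhang17} closely, replacing $g^{TM}$ by $g^{TM}_\bv$ throughout as in \cref{esti}.
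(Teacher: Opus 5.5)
Since the paper only quotes Zhang here, your plan of rederiving both parts from \eqref{kos}, \eqref{cur6} and \cref{esti} is the natural one, and your two main ideas are correct. The Gauss equation handles $\sum_{i,j\le q}R^{TM,\bv}_{ijij}$, because the second fundamental form of the leaves is given by the $(1,2)$ and $(1,3)$ blocks of the first matrix, of size $O(\varepsilon)$ and $O(1)$. Leafwise flatness of the Bott connection is indeed what produces the $O(\varepsilon^2)$ in \eqref{cur31}. However, two steps would not go through as written.

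\emph{The estimate of $k^{TM,\bv}$.} You place the difficulty in products of two $O(\beta^{-1})$ diagonal-block entries and propose to gauge them away. No such products occur, and the gauge is not what controls the error. For $i\le q<k$,
\begin{align*}
R^{TM,\bv}_{ikik} &= -e_i\bigl(\omega^\bv_{ik}(e_k)\bigr) + e_k\bigl(\omega^\bv_{ik}(e_i)\bigr) + \omega^\bv_{ik}([e_i,e_k])\\
&\qquad + \sum_{b=1}^n\bigl(\omega^\bv_{ib}(e_i)\,\omega^\bv_{bk}(e_k) - \omega^\bv_{ib}(e_k)\,\omega^\bv_{bk}(e_i)\bigr).
\end{align*}
Only entries evaluated on $e_i$ can be of size $\beta^{-1}$. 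Entries evaluated on $e_k$ are in general only $O(1+\varepsilon/\beta)$, so the crude bounds of \cref{esti} give just $O(\beta^{-1}+\varepsilon\beta^{-2})$. The correct bound comes from the fine entries:
\begin{itemize}
\item The $O(\beta^{-1})$ entries $\omega^\bv_{ib}(e_i)$ ($b\le q$) and $\omega^\bv_{bk}(e_i)$ ($b,k$ in the same transverse block) are always multiplied either by the $(1,2)$ entry $O(\beta\varepsilon^2)$ of the second matrix or by the vanishing $(1,3)$ entry of the third.
\item The derivative term $e_i(\omega^\bv_{ik}(e_k))=\beta^{-1}\bar e_i(\omega^\bv_{ik}(e_k))$ is $O(\varepsilon^2)$ or $0$ for the same reason. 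No rotation of the frame inside the blocks changes the size of this off-diagonal term.
\item The $\varepsilon^2/\beta^2$ comes from $\omega^\bv_{ib}(e_k)\,\omega^\bv_{bk}(e_i)$ with $b,k$ in different transverse blocks, and from the bracket term. Each is a product of two $O(\varepsilon/\beta)$ factors.
\end{itemize}
So replace the gauge step by this bookkeeping. This is exactly where almost isometry and integrability of $F_2^\perp$ enter.

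\emph{Part (2).} The ``otherwise'' case does not follow from the crude bound either. The bound $O(1+\varepsilon/\beta)$ for $\omega^\bv(e_j)$, $j>q$, gives $O(\beta^{-1}+\varepsilon\beta^{-2})$ for $e_i(\omega^\bv_{kl}(e_j))$ and the quadratic terms when $i\le q<j$. When $i,j>q$ it gives $O(1+\varepsilon^2\beta^{-2})$. What you need is that the diagonal transverse blocks of $\omega^\bv(e_j)$, $j>q$, are $O(\varepsilon)$ or $O(1)$; these are the $(2,2)$ and $(3,3)$ blocks of the second and third matrices.

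For $i,j\le q$ your account of the error is also off. The curvature of $\nabla^{F_b^\perp,\bv}$ involves only the $F_b^\perp$-diagonal block of $\omega^\bv$, so off-diagonal blocks never enter. By \eqref{kos}, for $k,l$ in that block and $i\le q$,
\[
\omega^\bv_{kl}(e_i) = \beta^{-1}\langle[\bar e_i,\bar e_k],\bar e_l\rangle - \tfrac12\langle[e_k,e_l],e_i\rangle_\bv .
\]
The first term is the Bott connection form. The two skew Koszul terms combine to it because, by \eqref{aliso1}, the Bott connection is block lower-triangular with metric diagonal blocks, and these blocks are themselves leafwise flat. The second term equals $-\tfrac12\beta\varepsilon^2\langle[\bar e_k,\bar e_l],\bar e_i\rangle$ for $b=1$ and vanishes for $b=2$ by integrability of $F_2^\perp$.

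The flat part contributes nothing on $(e_i,e_j)$. The correction enters in three ways: differentiated by $e_i=\beta^{-1}\bar e_i$, multiplied by an $O(\beta^{-1})$ Bott entry, or evaluated on $[e_i,e_j]=\beta^{-2}[\bar e_i,\bar e_j]\in F$. Each gives $O(\varepsilon^2)$, and the term quadratic in the correction is $O(\beta^2\varepsilon^4)$. In fact $R^{F_2^\perp,\bv}_{ijkl}=0$ exactly for $i,j\le q$.
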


The following theorem combined with \cref{partcur1} is a  Lichnerowicz-type formula with coefficient $\frac{q}{q-1}$ for almost isometric foliations in the adiabatic limit; see \eqref{est9} and compare with \cite{fr80}, \cite[\S5.2]{bhm15}. 
\begin{thm}\label{part3}
    Assume \cref{3.3}. For each pair of $\bv>0$, let $(S_\bv,\nabla^\bv,c_\bv)$ be a real or complex Clifford module on $\ket{M, g^{TM}_\bv}$ with Dirac operator $D_\bv$. Recall
    \begin{equation}
        P^{\bv;q}_{e_j} = \nabla^\bv_{e_j} + \frac{1}{q}c_\bv(e_j)D_{\bv;q}.
    \end{equation}
    Let $L_0\subseteq M$ be an arbitrary compact subset. Then the following formula holds uniformly for sufficiently small $\bv>0$ and all $u\in \Gamma_\cm(S_\bv)$ supported in $L_0$:
    \begin{align}\label{spe5}
        \begin{split}
            \int_M|D_\bv u|^2 
            &=
            \int_M \bbbac{\frac{q+O(\beta)}{q-1}\sum_{j=1}^{q}\babs{P^{\bv;q}_{e_j}u}^2 + \sum_{j=q+1}^{n}\babs{\nabla^\bv_{e_j}u}^2}\\
            &\quad+
            \int_M \bbbinn{u, \bbbket{\calr^\bv + \frac{1+O(\beta)}{q-1}\calr^\bv_{q} + O\bket{\beta^{-1}}} u}\\
            &\quad\quad+ \int_{\partial M}\bbbinn{u, \bbbket{\frac{q H^{F}}{2\beta} - D^\partial_\bv - \frac{1+O(\beta)}{q-1}D^\partial_{\bv;q} + O(1)}u}.
        \end{split}
    \end{align}
    Here $\inn{\cdot,\cdot}$ and $|\cdot|$ denote the Hermitian metric and its norm on $S_\bv$ for simplicity. The integrals are associated with the Riemannian measures induced by $g^{TM}_\bv$.
\end{thm}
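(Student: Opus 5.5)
The plan is to combine the full Bochner--Weitzenb\"ock identity \eqref{lic} for $D_\bv$ on $(M,g^{TM}_\bv)$ with $\frac{1}{q-1}$ times the partial identity of \cref{part} for $D_{\bv;q}$. \cref{lem2} converts the leafwise gradient into Penrose operators. The remainders $I_1,\dots,I_4$ of \cref{part} are then controlled using the connection estimates of \cref{esti}.

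\textbf{Step 1: algebraic identity.} Apply \cref{part} to $(S_\bv,\nabla^\bv,c_\bv)$ with the frame \eqref{scale}, and substitute $\int|D_{\bv;q}u|^2 = q\int\sum_{j\le q}|\nabla^\bv_{e_j}u|^2 - q\int\sum_{j\le q}|P^{\bv;q}_{e_j}u|^2$ from \cref{lem2}. Solving the resulting linear relation gives
\begin{equation*}
\int_M\sum_{j=1}^q|\nabla^\bv_{e_j}u|^2=\frac{q}{q-1}\int_M\sum_{j=1}^q|P^{\bv;q}_{e_j}u|^2+\frac{1}{q-1}\Big(\int_M\inn{u,\calr^\bv_q u}+I_1+I_2+I_3+I_4+\int_{\partial M}\Big\langle u,\Big(\tfrac{q-1}{2}H^{TM}_{\bv;q}-D^\partial_{\bv;q}\Big)u\Big\rangle\Big).
\end{equation*}
Insert this into \eqref{lic} for $D_\bv$, splitting $\sum_{j=1}^n|\nabla^\bv_{e_j}u|^2$ into the leafwise part ($j\le q$) and the transverse part ($j>q$).

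\textbf{Step 1': boundary mean curvature.} By \eqref{meancur}, $\frac{n-1}{2}H^{TM}_\bv=\frac{(q-1)H^F}{2\beta}+O(\beta\varepsilon^2)$ and $\frac{1}{q-1}\cdot\frac{q-1}{2}H^{TM}_{\bv;q}=\frac{H^F}{2\beta}$. These add up exactly to the leading term $\frac{qH^F}{2\beta}$ in \eqref{spe5}.

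\textbf{Step 2: estimating $I_1,\dots,I_4$.} Use \cref{esti} together with the integrability of $F$.
\begin{itemize}
\item In $I_2$, the term $p^\perp[e_j,e_k]$ vanishes because $F$ is integrable.
\item The remaining vector fields are $p^\perp\nabla^{TM,\bv}_{e_j}e_j$, $p\nabla^{TM,\bv}_{e_k}e_k$ and $p\nabla^{TM,\bv}_{e_j}e_k$ (with $j\le q<k$). Each is given by entries of $\omega^\bv$ lying in the blocks of orders $O(\varepsilon)$, $O(1)$, $O(\beta\varepsilon^2)$ or $0$. So all of them are $O(1)$ uniformly on $L_0$.
\item Hence $|I_1|+|I_2|+|I_3|\le C\int|u|\big(\sum_{j\le q}|\nabla^\bv_{e_j}u|^2\big)^{1/2}$, using $|D_{\bv;q}u|^2\le q\sum_{j\le q}|\nabla^\bv_{e_j}u|^2$.
\item By Cauchy--Schwarz this is at most $\beta\int\sum_{j\le q}|\nabla^\bv_{e_j}u|^2+O(\beta^{-1})\int|u|^2$.
\item Similarly $\inn{\nabla^{TM,\bv}_{e_j}\nu,e_k}_\bv=O(1)$ by the first block of \cref{esti}, so $I_4=\int_{\partial M}O(1)|u|^2$.
\end{itemize}

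\textbf{Step 3: absorption (the main obstacle).} The error from Step 2 contains $\beta\int\sum_{j\le q}|\nabla^\bv_{e_j}u|^2$. This quantity is itself rewritten by the identity of Step 1, so the relation must be solved once more. The effect is to multiply the coefficients $\frac{q}{q-1}$, $\frac{1}{q-1}\calr^\bv_q$ and $\frac{1}{q-1}D^\partial_{\bv;q}$ by factors $1+O(\beta)$.

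Three points need care here:
\begin{itemize}
\item The factor $O(\beta)$ multiplying $\frac{H^F}{2\beta}$ produces only an $O(1)$ boundary term, which is harmless.
\item No error ever multiplies $\calr^\bv$ (which is of size $\beta^{-2}$ by \eqref{cur2}) or $|\nabla^\bv_{e_j}u|$ for $j>q$. These two would not tolerate the errors.
\item Since the error terms are only bounded above, the $O(\cdot)$ in \eqref{spe5} must be read as quantities depending on $u$, bounded uniformly in $u$ and in small $\bv$. For instance the $O(\beta)$ in front of the Penrose terms is a number in $[-C\beta,C\beta]$.
\end{itemize}
All constants depend only on the $C^k$ bounds on $L_0$ supplied by \cref{esti} and \eqref{meancur}. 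This gives the required uniformity for sufficiently small $\bv>0$ and all $u\in\Gamma_\cm(S_\bv)$ supported in $L_0$.
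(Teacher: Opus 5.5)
Your proposal is correct and follows essentially the paper's own proof. You apply \cref{part} to the rescaled metric, bound $I_1,\dots,I_4$ via \cref{esti} and a $\beta$-weighted Cauchy--Schwarz, and combine with \cref{lem2} to solve for $\sum_{j\le q}\int|\nabla^\bv_{e_j}u|^2$ up to a factor $1+O(\beta)$. You then insert this into \eqref{lic}, using \eqref{meancur} to produce $\frac{qH^F}{2\beta}$. The only difference is the order: you solve the exact identity first and absorb the error afterwards, whereas the paper absorbs the remainders into $(1+O(\beta))\sum_{j\le q}\int|\nabla^\bv_{e_j}u|^2$ before combining with \cref{lem2}.
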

\begin{proof} 
    Applying \cref{part} to $F^\perp = \fp\oplus\ffp$ and $g^{TM}_\bv$, we obtain
    \begin{align}\label{rem1}
        \begin{split}
            I^\bv_1
            &=\bbbac{\sum_{j,l=1}^{q}\,\sum_{k=q+1}^{n} - \sum_{k,l=1}^{q}\,\sum_{j=q+1}^{n}}
            \int_M \omega^\bv_{jk}(e_j)\cdot\binn{u,\cbv(e_k)\,\cbv(e_l)\nnbv_{e_l}u}.
        \end{split}
    \end{align}
    By \cref{esti}, we have $\omega^\bv_{jk}(e_j) = O(1)$ in \eqref{rem1}. So there exists a constant $C > 0 $ depending on $L_0$ but independent of $\bv$ such that \eqref{remainder1}--\eqref{remainder4} hold:
    \begin{equation}\label{remainder1}
        \babs{I^\bv_1}
        \le C\sum_{l=1}^{q}\int_M|u|\cdot\babs{\nnbv_{e_l}u}
        \le C\beta\sum_{l=1}^{q}\int_M\babs{\nnbv_{e_l}u}^2 + \frac{C}{\beta}\int_M|u|^2.
    \end{equation}
    Since $F$ is integrable, we have $p^\perp[e_j,e_k]=0$ when $1\le j,k\le q$. So 
    \begin{align}\label{rem2}
        \begin{split}
            \babs{I^\bv_2}
            &= \bbbabs{-\sum_{j,l=1}^{q}\,\sum_{k=q+1}^{n}\int_M \omega^\bv_{kl}(e_j)\cdot\binn{u,\cbv(e_j)\,\cbv(e_k)\nnbv_{e_l}u}}\\
            &\le C\beta\sum_{l=1}^{q}\int_M\babs{\nnbv_{e_l}u}^2 + \frac{C}{\beta}\int_M|u|^2.
        \end{split}
    \end{align}
    Similarly, we have
    \begin{align}
        \begin{split}
            \babs{I^\bv_3} 
            &= \bbbabs{\sum_{k=q+1}^{n}\,\sum_{l=1}^{q}\int_M\omega^\bv_{kl}(e_k)\cdot\binn{u,\nabla^\bv_{e_l}u}} \\
            &\le 
            C\beta\sum_{l=1}^{q}\int_M\babs{\nnbv_{e_l}u}^2 + \frac{C}{\beta}\int_M|u|^2,
        \end{split}
    \end{align}
    as well as 
    \begin{equation}\label{remainder4}
        \babs{I^\bv_4} =\bbbabs{\frac{1}{2}\sum_{j=2}^{q}\,\sum_{k=q+1}^{n}\int_{\partial M} \omega^\bv_{1k}(e_j)\cdot\inn{u,\cbv(e_j)\,\cbv(e_k)u}}
        \le 
        C\int_{\partial M}|u|^2.
    \end{equation}
    It follows from \eqref{part2} and \eqref{remainder1}--\eqref{remainder4} that
    \begin{align}\label{spe4}
        \begin{split}
            \int_M |D_{\bv;q} u|^2 = \bket{1+O(\beta)} \sum_{j=1}^{q} &\int_M \babs{\nabla^\bv_{e_j} u}^2 + \int_M\bbinn{u, \bbket{\calr^\bv_{q} + O\bket{\beta^{-1}}} u} \\ 
              + &\int_{\partial M} \bbinn{u,\bbket{\frac{q-1}{2}H^{TM}_{\bv;q} - D^\partial_{\bv;q} + O(1)}u}.
        \end{split}
    \end{align}
    
    In this case, we integrate \eqref{spe3} over $M$ and then multiply by $q$ to obtain
    \begin{equation}\label{spe31}
        q\sum_{j=1}^{q}\int_M\babs{\nabla^\bv_{e_j}u}^2 =  q\sum_{j=1}^{q}\int_M\babs{P^{\bv;q}_{e_j}u}^2 + \int_M|D_{\bv;q} u|^2.
    \end{equation}
    Inserting \eqref{spe4} into \eqref{spe31}, by $\frac{1}{q-1+O(\beta)} = \frac{1+ O(\beta)}{q-1}$, we have
    \begin{align}\label{med}
        \begin{split}
            \sum_{j=1}^{q}\int_M \babs{\nabla^\bv_{e_j} u}^2 &= \frac{1+O(\beta)}{q-1}\int_M\bbac{q\sum_{j=1}^{q}  \babs{P^{\bv;q}_{e_j} u}^2 + \binn{u, \bket{\calr^\bv_{q} + O\ket{\beta^{-1}}} u}} \\ 
            & \qquad+\frac{1+O(\beta)}{q-1}\int_{\partial M} \bbinn{u,\bbket{\frac{q-1}{2}H^{TM}_{\bv;q} - D^\partial_{\bv;q} + O(1)}u}.
        \end{split}
    \end{align}
    Apply \eqref{lic} to $D_\bv$, and then insert \eqref{med} into it to conclude the proof.
\end{proof}

We turn to \cref{partcurva}. The curvature matrix  $\varOmega^\bv = \di\omega^\bv - \omega^\bv \wedge \omega^\bv$ of $\nbv$ is a matrix $\varOmega^\bv = \bket{\varOmega^\bv_{ij}}$ of $2$-forms. Recall $R^{TM,\bv}_{klij} = - \varOmega^\bv_{ij}(e_k,e_l)$. So
    \begin{align}\label{cur6}
        \begin{split} 
            R^{TM,\bv}_{klij}&= - e_k\bket{\omega^\bv_{ij}(e_l)} + e_l\bket{\omega^\bv_{ij}(e_k)} + \omega^\bv_{ij}\bket{[e_k,e_l]}\\
            &\qquad+ \sum_{b=1}^{n}\omega^\bv_{ib}(e_k)\cdot\omega^\bv_{bj}(e_l) - \sum_{b=1}^{n}\omega^\bv_{ib}(e_l)\cdot\omega^\bv_{bj}(e_k).
        \end{split}
    \end{align}
    By \eqref{ome2} and the formula \eqref{levi} for the Levi-Civita connection $\nbv$, we have
\begin{equation}\label{kos}
        2\cdot\omega^\bv_{jk}(e_i) = \binn{[e_i,e_j],e_k}_\bv - \binn{[e_i,e_k],e_j}_\bv - \binn{[e_j,e_k],e_i}_\bv.
\end{equation}
    Applying \eqref{split1}--\eqref{scale} to \eqref{kos},  we observe that $\omega^\bv_{jk}(e_i)$ and $\we_l\bket{\omega^\bv_{jk}(e_i)}$ have the same order of asymptotic estimates for all $1\le i,j,k,l\le n$.

\begin{thm}\label{partcur1}
    Assume \cref{3.3}. For each pair of $\bv>0$, let $(S_\bv,\nabla^\bv,c_\bv)$ be a real or complex Clifford module on $\ket{M, g^{TM}_\bv}$. For $a \in \{q,n\}$, we have
    \begin{equation}\label{cur4}
        -\frac{1}{8}\sum_{i,j=1}^{a}\, \sum_{k,l=1}^{n} R^{TM,\bv}_{ijkl}\,c_\bv(e_i)\,c_\bv(e_j)\,c_\bv(e_k)\,c_\bv(e_l) = \frac{k^F}{4\beta^2} + O\bbbket{\frac{1}{\beta}+\frac{\varepsilon}{\beta^2}}
    \end{equation}
    holding uniformly on any compact subset of $M$ when $\bv>0$ are small.
\end{thm}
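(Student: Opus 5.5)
We treat the two cases $a=n$ and $a=q$ separately, both starting from the algebraic decomposition of \cref{partcurva}. For $a=n$ the left-hand side of \eqref{cur4} is exactly the Lichnerowicz expression, so by the Clifford-algebraic identity \eqref{lichnerowicz} (which is purely algebraic in the curvature tensor and holds for the symbols $c_\bv(e_i)$ irrespective of the module, provided we read the left side as a Clifford element) it equals $k^{TM,\bv}/4$. Then \eqref{cur2} of \cref{curvature10} gives $\frac{k^F}{4\beta^2}+O(1+\varepsilon^2/\beta^2)$, which is contained in $O(\beta^{-1}+\varepsilon\beta^{-2})$ for small $\bv$. So the case $a=n$ is immediate.

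For $a=q$, apply \cref{partcurva} to $g^{TM}_\bv$ and the frame \eqref{scale}. The first term is $\frac14\sum_{i,j\le q}R^{TM,\bv}_{ijij}=\frac{k^F}{4\beta^2}+O(1)$ by \eqref{cur2}. It remains to show the mixed terms
\begin{equation*}
\sum_{i,j=1}^{q}\sum_{l=q+1}^{n}R^{TM,\bv}_{ijil}\,c_\bv(e_j)c_\bv(e_l)
\quad\text{and}\quad
\sum_{i,j=1}^{q}\sum_{k,l=q+1}^{n}R^{TM,\bv}_{ijkl}\,c_\bv(e_i)c_\bv(e_j)c_\bv(e_k)c_\bv(e_l)
\end{equation*}
are $O(\beta^{-1}+\varepsilon\beta^{-2})$; since the $c_\bv(e_\cdot)$ have norm one, it suffices to bound the curvature coefficients.

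For the last term, split $k,l$ by blocks. If both $k,l$ lie in the same block $F_b^\perp$, write $R^{TM,\bv}_{ijkl}$ as $R^{F_b^\perp,\bv}_{ijkl}$ plus the second-fundamental-form correction $\sum_{r\notin F^\perp_b}\bket{\omega^\bv_{kr}(e_i)\omega^\bv_{rl}(e_j)-\omega^\bv_{kr}(e_j)\omega^\bv_{rl}(e_i)}$ (Gauss-type formula for the projected connection \eqref{defcon}); the first is $O(\varepsilon^2)$ by \eqref{cur31}, and the corrections use the off-diagonal entries of the $1\le i\le q$ block of \cref{esti}, which are $O(\varepsilon)$, $O(1)$ or $O(\varepsilon/\beta)$; products of two of them with one index $i\le q$ are bounded by $O(\varepsilon/\beta)\cdot O(\varepsilon)$, $O(1)\cdot O(1)$ or $O(\varepsilon/\beta)^2$ — the last one, $\varepsilon^2/\beta^2$, must be checked carefully: it arises only through the $\fp$–$\ffp$ entry, and I expect antisymmetry in $i,j$ together with the lower-triangular holonomy \eqref{aliso1} (which kills one of the two orderings) to reduce it to $O(\varepsilon/\beta^2)$ or better. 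If $k,l$ lie in different blocks, use \eqref{cur6} directly with \cref{esti}: the derivative terms are $\beta^{-1}\cdot O(\varepsilon/\beta)$ at worst (using that $\we_l$-derivatives preserve orders and $e_i=\beta^{-1}\we_i$), and quadratic terms are similar, giving $O(\varepsilon\beta^{-2}+\beta^{-1})$. For the middle term, use the Bianchi identity/Codazzi-type structure: $R^{TM,\bv}_{ijil}$ with $i,j\le q<l$ is computed from \eqref{cur6} where the entries $\omega^\bv_{il}(e_\cdot)$ mixing $F$ and $F^\perp$ are $O(\varepsilon)$ or $O(1)$; differentiated by $e_i$ this gives $O(\beta^{-1})$, and quadratic products are $O(\beta^{-1})\cdot O(1)$ or $O(\beta^{-1})\cdot O(\varepsilon)$, hence $O(\beta^{-1})$.

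The main obstacle is the bookkeeping of the $O(\varepsilon^2/\beta^2)$-type quadratic products in the $F\times F^\perp\times F^\perp$ curvature components: naive use of \cref{esti} gives $\varepsilon^2\beta^{-2}$, which is acceptable since $\varepsilon^2\beta^{-2}\le\varepsilon\beta^{-2}$, but the genuinely dangerous products are $O(\beta^{-1})\cdot O(\beta^{-1})$ from diagonal blocks. I would handle these by noting that the diagonal $O(1/\beta)$ blocks are exactly the connection of $\nabla^{F_b^\perp,\bv}$ along $F$, which is absorbed into $R^{F^\perp_b,\bv}$ and controlled by \eqref{cur31} (this is where almost isometry enters, via Zhang's estimate), so only off-diagonal products survive.
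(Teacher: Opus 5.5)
Your proof is correct. For $a=n$, and for the term $\frac14\sum_{i,j\le q}R^{TM,\bv}_{ijij}$ when $a=q$, it coincides with the paper's argument. The difference is in how you bound the two mixed terms.

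\textbf{The quartic term.} Before expanding with \eqref{cur6}, the paper applies the pair symmetry $R^{TM,\bv}_{ijkl}=R^{TM,\bv}_{klij}$. The curvature two-form is then evaluated on $e_k,e_l$ with $k,l\ge q+1$. As a result, the derivatives $e_k(\cdot)$ carry no factor $\beta^{-1}$, and every coefficient $\omega^\bv(e_k)$ is $O(1+\varepsilon/\beta)$ by \cref{esti}. This gives $O(1+\varepsilon^2/\beta^2)$ using only \cref{esti}.

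You instead keep $e_i,e_j\in F$ as the two-form arguments. This creates potential $O(\beta^{-2})$ contributions from the diagonal blocks.
\begin{itemize}
\item In the same-block case you remove them with the Gauss-type decomposition and Zhang's estimate \eqref{cur31}, which is correct up to sign.
\item In the cross-block case you observe that every product contains an off-diagonal $O(\varepsilon/\beta)$ factor, giving $O(\varepsilon/\beta^2)$.
\end{itemize}
Both routes work. The paper's is more economical: it shows that the deeper cancellation in \eqref{cur31} is not needed for this theorem. The paper uses \eqref{cur31} only for the curvature of the twisting bundles, as in \eqref{cur44}.

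\textbf{The middle term.} The paper again first uses $R^{TM,\bv}_{ijil}=R^{TM,\bv}_{ilij}$ and obtains $O(\beta^{-1}+\varepsilon\beta^{-2})$. Your direct expansion of $R^{TM,\bv}_{ijil}$, with $(e_i,e_j)$ as arguments, is also valid and in fact gives the sharper $O(\beta^{-1})$. Two facts drive this:
\begin{itemize}
\item Every quadratic product contains one $F$--$F^\perp$ coefficient evaluated on an $F$-direction, which is $O(1)$, times a factor that is at most $O(\beta^{-1})$.
\item The bracket $[e_i,e_j]\in F$ has $O(\beta^{-1})$ coefficients in the frame $\{e_b\}_{b\le q}$.
\end{itemize}
You do not spell out the bracket term, but it is harmless. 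No Bianchi or Codazzi identity is needed.

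\textbf{A minor point.} Your concern about the $\varepsilon^2/\beta^2$ products, and the suggestion that antisymmetry together with the lower-triangular form \eqref{aliso1} would improve them, is unnecessary. Since $\varepsilon\le 1$, already $\varepsilon^2/\beta^2\le\varepsilon/\beta^2$, as you note at the end.
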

\begin{proof}
    \textbf{Case 1: $a = n$.}  Applying \cref{partcurva} to the case $F = TM$, we obtain
    \begin{align}
        \begin{split}
            -\frac{1}{8}\sum_{i,j=1}^{n}\, \sum_{k,l=1}^{n} R^{TM,\bv}_{ijkl}\,c_\bv(e_i)\,c_\bv(e_j)\,c_\bv(e_k)\,c_\bv(e_l)
            =
            \frac{k^{TM,\bv}}{4}.
        \end{split}
    \end{align}
    Then the result follows from \eqref{cur2}, which is \cite[(1.30)]{zhang17}.
    
    \textbf{Case 2: $a =q$.}  \cref{partcurva} implies that 
    \begin{align}\label{cur5}
        \begin{split}
            &-\frac{1}{8}\sum_{i,j=1}^{q}\, \sum_{k,l=1}^{n} R^{TM,\bv}_{ijkl}\,c_\bv(e_i)\,c_\bv(e_j)\,c_\bv(e_k)\,c_\bv(e_l)\\
            &\qquad=
            \frac{1}{4}\sum_{i,j=1}^{q}R^{TM,\bv}_{ijij} -\frac{1}{2}\sum_{i,j=1}^{q}\,\sum_{l=q+1}^{n}R^{TM,\bv}_{ijil}\,c_\bv(e_j)\,c_\bv(e_l)\\
            &\qquad\qquad- \frac{1}{8}\sum_{i,j=1}^{q}\,\sum_{k,l=q+1}^{n} R^{TM,\bv}_{ijkl}\,c_\bv(e_i)\,c_\bv(e_j)\,c_\bv(e_k)\,c_\bv(e_l).
        \end{split}
    \end{align}

    Suppose now $1\le i,j\le q$ and $q+1\le k,l \le n$. By \cref{esti}, the first two terms on the right-hand side of \eqref{cur6} are $O(1+\frac{\varepsilon}{\beta})$. Together with \eqref{split1}--\eqref{scale}, the third term on the right-hand side of \eqref{cur6} satisfies
    \begin{align}\label{cur7}
        \begin{split}
            \omega^\bv_{ij}\bket{[e_k,e_l]} =& \bbbac{\sum_{b=1}^{q} + \sum_{b=q+1}^{m} + \sum_{b=m+1}^{n}}\, \bbket{\omega^\bv_{ij}(e_b) \cdot\ttbinn{[e_k,e_l],e_b}}\\
            =& O\bket{\tfrac{1}{\beta}}\,O(\beta) + O(\varepsilon)\,O\bket{\tfrac{1}{\varepsilon}} + O(1)\,O(1) = O(1).
        \end{split}
    \end{align}
    The last two terms on the right-hand side of \eqref{cur6} are
    \begin{equation}
        O\bbbket{1+\frac{\varepsilon}{\beta}}\cdot O\bbbket{1+\frac{\varepsilon}{\beta}} = O\bbbket{1+\frac{\varepsilon}{\beta}+\frac{\varepsilon^2}{\beta^2}} = O\bbbket{1 + \frac{\varepsilon^2}{\beta^2}}
    \end{equation} 
    since $\frac{\varepsilon}{\beta} \le \frac{1}{2}\bket{1+\frac{\varepsilon^2}{\beta^2}}$. When $1\le i,j\le q$ and $q+1\le k,l\le n$, we conclude that
    \begin{equation}\label{cur8}
        \rbv_{ijkl} = \rbv_{klij} =  O\bbbket{1 +\frac{\varepsilon^2}{\beta^2}}.
    \end{equation}

    Suppose now $1\le i,j \le q$ and $q+1\le l \le n$. For the second term on the right-hand side of \eqref{cur5}, it follows from \eqref{cur6} that
    \begin{align}\label{cur9}
        \begin{split}
            R^{TM,\bv}_{ijil} = R^{TM,\bv}_{ilij} =& - e_i\bket{\omega^\bv_{ij}(e_l)} + e_l\bket{\omega^\bv_{ij}(e_i)} + \omega^\bv_{ij}\bket{[e_i,e_l]}\\
            & + \sum_{b=1}^{n}\omega^\bv_{ib}(e_i)\cdot\omega^\bv_{bj}(e_l) - \sum_{b=1}^{n}\omega^\bv_{ib}(e_l)\cdot\omega^\bv_{bj}(e_i).
        \end{split}
    \end{align}
    By \cref{esti} and \eqref{split1}--\eqref{scale}, the third term on the right-hand side of \eqref{cur9} is
    \begin{align}\label{cur10}
        \begin{split}
            &\omega^\bv_{ij}\bket{[e_i,e_l]}\\
            &\quad =\bbbac{\sum_{b=1}^{q} + \sum_{b=q+1}^{m} + \sum_{b=m+1}^{n}}\,\bbket{\omega^\bv_{ij}(e_b)\cdot\ttbinn{[e_i,e_l],e_b}}\\
            &\quad =
        \begin{cases}
            O\bket{\frac{1}{\beta}}\, O(\varepsilon)  + O(\varepsilon)\, O\bket{\frac{1}{\beta}} + O(1) \,O\big(\frac{\varepsilon}{\beta}\big)   = O\big(\frac{\varepsilon}{\beta}\big), & \text{if } q+1 \le l \le m,\\
            O\bket{\frac{1}{\beta}} \,O(1) + O(\varepsilon) \,O\big(\frac{1}{\beta\varepsilon}\big) + O(1) \,O\big(\frac{1}{\beta}\big) = O\big(\frac{1}{\beta}\big), & \text{if } m+1 \le l \le n.\\
        \end{cases}
        \end{split}
    \end{align}
    Similar to \eqref{cur8}, by \cref{esti} and \eqref{cur9}--\eqref{cur10}, we have
    \begin{align}\label{cur11}
        \begin{split}
            \rbv_{ijil} &= \frac{O(1)}{\beta} + O\bbbket{\frac{1}{\beta}} + O\bbbket{\frac{1}{\beta}} +  O\bbbket{\frac{1}{\beta}\bbket{1+\frac{\varepsilon}{\beta}}} + O\bbbket{\bbket{1+\frac{\varepsilon}{\beta}}\frac{1}{\beta}}\\
            &= O\bbbket{\frac{1}{\beta}+ \frac{\varepsilon}{\beta^2}}.
        \end{split}
    \end{align}
    Since \eqref{cur2}, \eqref{cur8} and \eqref{cur11} hold uniformly on any compact subset when $\bv>0$ are small, the result follows from \eqref{cur5}.
\end{proof}

\section{Connes Fibration and Leafwise Band Width}\label{4}

The Connes fibration introduced in \cite[\S5]{connes} has the advantage of admitting an almost isometric structure, which is essential in \cref{part3,partcur1}. In \cref{fiber}, we recall the construction of the Connes fibration with spin vertical tangent bundle. We prove \cref{widmain,widmain2} in \cref{4.2}, and then discuss further examples of foliated bands with infinite vertical $\widehat{A}$-cowaist in \cref{4.3}.

\subsection{Connes Fibrations over Foliated Bands}\label{fiber}
Let $W$ be a compact band equipped with an integrable subbundle $F$ transverse to $\partial W$. Let $g^F$ be a metric on $F$. Suppose $\dim W\ge 2$ and $F$ has codimension $a\ge 1$.

Recall that $\partial W = \partial_-W\sqcup\partial_+W$, where $\partial_\pm W$ are nonempty unions of connected components. As in \eqref{glue}, we obtain a noncompact, foliated manifold
\begin{equation}\label{vinfty}
    W^\infty = W^- \cup_{\partial_-W} W \cup_{\partial_+W} W^+,
\end{equation}
where $W^\pm = \partial_\pm W \times [1,+\infty)$ are half-cylinders. Let $F$ still denote the integrable subbundle on $W^\infty$. We extend $g^F|_W$ to a complete metric $g^F$ on $W^\infty$.

Following \cite[\S5]{connes} (cf.\ \cite[\S2.1]{zhang17}), we obtain a smooth fibration\footnote{Throughout this paper, the term \emph{smooth fibration} refers to a smooth, locally trivial fiber bundle.}
\begin{equation}\label{cm}
    \bar{\pi}\colon \calw^\infty\to W^\infty,
\end{equation} 
called the \emph{Connes fibration}\footnote{This specific construction was previously utilized in \cite[p.~457]{g23} and \cite[p.~3]{sy25}.}, such that for each $x\in W^\infty$, the fiber $\calw^\infty_{x} = \bar{\pi}^{-1}(x)$ is a product of four copies of the space of Euclidean metrics on the vector space $T_xW^\infty/F_x$. The vertical tangent bundle $T^V\calw^\infty$ is spin and of even rank since it is a direct sum of four isomorphic subbundles. We obtain a noncompact band
\begin{equation}
    \calw \coloneqq \bar{\pi}^{-1}(W)
\end{equation}
with $\partial_\pm\calw = \bar{\pi}^{-1}(\partial_\pm W)$. Meanwhile, $\calw^\infty\setminus\calw$ is diffeomorphic to $\partial\calw\times(1,\infty)$.

Let $\calh_a$ denote the noncompact homogeneous space $\gl(a,\bbr)/\ortho(a)$. The fibers of $\calw^\infty$ are diffeomorphic to $\calh_a\times\calh_a\times\calh_a\times\calh_a$, which is simply connected. Note that $\calh_a$  carries a $\gl(a,\bbr)$-invariant complete metric of nonpositive sectional curvature, and the product metric on $\calh_a\times\calh_a\times\calh_a\times\calh_a$ inherits these properties. Let $g^{T^V\calw^\infty}$ denote the induced smooth metric on the vertical tangent bundle $T^V\calw^\infty$. The Cartan--Hadamard theorem guarantees that for each $x\in W^\infty$, any two points $q_1,q_2$ in the fiber $\calw^\infty_{x}$ can be joined by a unique geodesic with respect to $g^{T^V\calw^\infty}|_{\calw^\infty_{x}}$.

Since $F$ is transverse to $\partial W$, we can take a splitting
\begin{equation}\label{bdreq}
    TW^\infty = F \oplus F^\perp\quad \text{with}\quad\fz|_{\partial W}  \subset T\partial W.
\end{equation}
Then $F^\perp$ is isomorphic to the normal bundle $TW^\infty/F$. The Bott connection on $F^\perp$ defined in \eqref{bot} lifts to a connection on the fibration $\calw^\infty$. Let $T^H\calw^\infty\subset T\calw^\infty$ denote the corresponding horizontal tangent bundle, which is isomorphic to $\bar{\pi}^*(TW^\infty)$. Then the splitting $TW^\infty = F \oplus F^\perp$ lifts to a splitting $T^H\calw^\infty = \calf \oplus \f$, where $\calf$ and $\f$ are isomorphic to $\bar{\pi}^*F$ and $\bar{\pi}^*F^\perp$ respectively. 

Since $F$ is integrable and the Bott connection on $F^\perp$ is leafwise flat, the subbundle $\calf$ is also integrable. The metric $g^F$ lifts to a metric $g^\calf = \bar{\pi}^*g^F$ on $\calf$. For any $v\in \calw^\infty$, the fiber $\calf_{1,v}^\perp$ is identified with $F^\perp_{\bar{\pi}(v)}$ under the projection $\bar{\pi}\colon \calw^\infty \to W^\infty$. By definition, each point $v\in \calw^\infty$ is a quadruple of Euclidean metrics on $F^\perp_{\bar{\pi}(v)}\cong T_{\bar{\pi}(v)}W^\infty/F_{\bar{\pi}(v)}$. The first metric in this quadruple determines a metric on $\calf^\perp_{1,v}$. Then the subbundle $\f$ carries an induced smooth metric $g^\f$. 

In what follows, we adopt the notation $\calf_2^\perp = T^V\calw^\infty$. By \eqref{bdreq}, we have
\begin{equation}
    (\f\oplus\ff)|_{\partial \calw} \subset T\partial \calw.
\end{equation}
Endow $\calw^\infty$ with the Riemannian metric $g^{T\calw^\infty}$ defined by the orthogonal splitting
\begin{equation}
    \label{splittingmetric}
    T\calw^\infty = \calf\oplus\calf_1^\perp\oplus\calf_2^\perp,\quad g^{T\calw^\infty} = g^\calf \oplus g^{\calf_1^\perp} \oplus g^{\calf_2^\perp}.
\end{equation}
Let $\nu$ denote the inward unit normal to $\partial \calw$ with respect to $g^{T\calw^\infty}$. It follows that $\nu\in \Gamma(\calf|_{\partial \calw})$. Furthermore, according to \cite[Lem.~5.2]{connes}, the foliated manifold $(\calw^\infty,\calf)$ is almost isometric with respect to \eqref{splittingmetric} in the sense of \cref{aliso}.

Choose four metrics on $\fz$. This determines an embedded section 
\begin{equation}\label{embedsec}
    \iota\colon W^\infty\hookrightarrow \calw^\infty
\end{equation}
of the Connes fibration $\bar{\pi}\colon \calw^\infty \to W^\infty$. For any $v\in \calw^\infty \setminus \iota(W^\infty)$, recall that there exists a unique geodesic from $v$ to $\iota(\bar{\pi}(v))$ in the fiber $\calw^\infty_{\bar{\pi}(v)}$. Let $\sigma(v) \in \ff|_v$ \phantomsection\label{sigmadef}
be the unit vector tangent to this geodesic at $v$ pointing toward $\iota(\bar{\pi}(v))$. Let
\begin{equation}\label{rho}
    \rho(v)= \dist_{\calw^\infty_{\bar{\pi}(v)}}(v,\iota(\bar{\pi}(v)))
\end{equation}
denote the length of this geodesic, and define $\rho|_{\iota(W^\infty)} = 0$. Then $\sigma$ is a smooth vector field defined on $\calw^\infty \setminus \iota(W^\infty)$, and $\rho$ is a smooth function on $\calw^\infty \setminus \iota(W^\infty)$.

Let $p_2^\perp \colon T\calw^\infty \to \ff$ be the orthogonal projection, and let $\nabla^{T\calw^\infty}$ denote the Levi-Civita connection of $g^{T\calw^\infty}$. Then $\nabla^\ff \coloneqq p_2^\perp \nabla^{T\calw^\infty} p_2^\perp$ is a Euclidean connection on $\ff$. It is independent of $g^\calf\oplus g^\f$ since $\ff$ is integrable.

Since $W$ is compact, the following key lemma of \cite{zhang17} still holds (the key ingredient is that the restriction of $g^{\ff}$ to each fiber is $\gl(a,\bbr)$-invariant and has nonpositive sectional curvature).
\begin{lem}[{\cite[Lem.~2.1]{zhang17}}]\label{key3}
    There exists a constant $C>0$, depending only on the embedding $\iota\colon  W \to \calw$, such that for any $X\in \Gamma(\calf)$ with $|X|\le 1$, the following pointwise inequalities hold on $\calw \setminus \iota(W)$:
    \begin{equation}
        |X(\rho)|\le C, \qquad \bbabs{\nabla^{\calf^\perp_2}_X\sigma} \le \frac{C}{\rho}.
    \end{equation}
\end{lem}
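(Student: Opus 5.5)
The plan is to reduce everything to the single-fiber statement of \cite[Lem.~2.1]{zhang17}, using the $\gl(a,\bbr)$-equivariance of the fiber geometry together with compactness of $W$. The structural point is this. By construction of the Connes fibration, the horizontal lift of $X\in\Gamma(\calf)$ is determined by Bott parallel transport along leaves. Bott parallel transport along a leafwise curve $\gamma$ from $x$ to $y$ is a linear isomorphism $F^\perp_x\to F^\perp_y$, and it induces a map $\calw^\infty_x\to\calw^\infty_y$ on quadruples of Euclidean metrics. This map is an isometry for the $\gl(a,\bbr)$-invariant fiber metrics $g^{\ff}$, because in a local frame of $F^\perp$ it is the action of an element of $\gl(a,\bbr)$. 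Hence the flow generated by the horizontal lift of a leafwise vector field maps fibers to fibers isometrically. In particular it preserves fiberwise distances and maps geodesics to geodesics.

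\textbf{Step 1: the bound on $X(\rho)$.} Fix $v\in\calw\setminus\iota(W)$ and $X$ with $|X|\le 1$, and let $\phi_t$ be the local flow of (the lift of) $X$. Then $\rho(\phi_t v)=\dist\bigl(\phi_t v,\iota(\bar\pi\phi_t v)\bigr)$. Since $\phi_t$ is a fiberwise isometry, $\dist(\phi_t v,\phi_t\iota(\bar\pi v))=\rho(v)$. The triangle inequality then gives
\[
\abs{\rho(\phi_tv)-\rho(v)}\le \dist\bigl(\phi_t\iota(\bar\pi v),\iota(\bar\pi\phi_tv)\bigr).
\]
The right-hand side is the fiberwise distance between the parallel-transported section and the section itself. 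It vanishes at $t=0$, and its derivative at $t=0$ is the norm of the vertical component of $X$ along $\iota$, namely $\abs{p_2^\perp\,\di\iota(\bar\pi_*X)}$. Because $\iota$ is a smooth section over the compact set $W$ and $\abs{\bar\pi_*X}\le 1$, this quantity is bounded by a constant $C$ that depends only on $\iota$. Dividing by $t$ and letting $t\to0$ gives $\abs{X(\rho)}\le C$.

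\textbf{Step 2: the bound on $\nabla^{\calf_2^\perp}_X\sigma$.} Since $\calf_2^\perp$ is integrable, $\nabla^{\calf_2^\perp}_X\sigma$ can be computed by comparing $\sigma(\phi_tv)$ with $(\phi_t)_*\sigma(v)$. The two vectors differ by an $O(t)$ correction, and the Euclidean connection on vertical vectors agrees with the Lie derivative plus the term $\nabla^{\calf_2^\perp}\!X^H$. That term is bounded on compact sets in the base and is fiberwise invariant, so it is uniformly bounded. The vector $(\phi_t)_*\sigma(v)$ is the unit tangent at $\phi_tv$ of the geodesic toward $\phi_t\iota(\bar\pi v)$. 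The vector $\sigma(\phi_tv)$ is the unit tangent of the geodesic toward $\iota(\bar\pi\phi_tv)$. These two targets are at distance $O(t)$ by Step 1.

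On a Cartan--Hadamard manifold, the unit initial direction of the geodesic from $w$ to a target $z$ is Lipschitz in $z$ with constant at most $1/\dist(w,z)$. This holds because nonpositive curvature makes the exponential map expand distances, which is the Rauch comparison theorem. Consequently the angle between the two vectors is at most $Ct/\rho$. Differentiating at $t=0$ yields $\abs{\nabla^{\calf_2^\perp}_X\sigma}\le C/\rho+C'$.

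\textbf{Main obstacle.} The hardest step is exactly this last one, turning the extra bounded term $C'$ into a bound of the form $C/\rho$. For $\rho\le1$ this is automatic after enlarging $C$. For large $\rho$ one must check that the connection term vanishes, or is absorbed, when computed against $\sigma$. The approach I will try first is to express $\nabla^{\calf_2^\perp}_X$ on vertical fields via the Koszul formula \eqref{kos}. Since $g^{\ff}$ is invariant under the horizontal flow, the term $\mathcal{L}_Xg^{\ff}$ vanishes, and this should show that $\nabla^{\calf_2^\perp}_X Y=[X^H,Y]$ for vertical $Y$. With that identity the estimate is exactly the geodesic comparison above, and $C'=0$. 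The remaining uniformity comes from compactness of $W$ and the homogeneity of $\calh_a^4$.
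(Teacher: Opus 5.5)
Your proposal is correct and follows the route the paper indicates. The paper simply invokes Zhang's \cite[Lem.~2.1]{zhang17} and names as its key ingredients exactly the two you use: the $\gl(a,\bbr)$-invariance of the fiber metric, which makes the Bott-lifted leafwise flow a fiberwise isometry and gives $|X(\rho)|\le C$ through the vertical part of $\di\iota$ over the compact $W$, and nonpositive curvature, which gives the $1/\rho$ Rauch bound on how the geodesic direction varies. Your proposed fix for the obstacle also works: reduce by tensoriality in $X$ to a horizontal lift $X^H$ of a leafwise field, and the Koszul formula together with $\mathcal{L}_{X^H}g=0$ on vertical vectors gives $p_2^\perp\nabla_{X^H}Y=[X^H,Y]$ for vertical $Y$, so $C'=0$.
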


The noncompactness of the fibers of $\calw^\infty$ causes difficulties. So we consider
\begin{equation}
    \calw^\infty_r \coloneqq  \{v\in \calw^\infty: \rho(v)\le r\}
    \quad \text{and}\quad 
    \ws \coloneqq  \{v\in \calw: \rho(v)\le r\}
\end{equation}
for $r>0$. In \cite[(2.33)]{zhang17}, pseudodifferential operators on the double of $\ws$ were used. However, the boundary value problems for pseudodifferential operators are considerably more difficult than those for differential operators\footnote{Since $\ws$ has corners, the approach in Yu--Zhang \cite{yz18} involving Atiyah--Patodi--Singer boundary value problems does not apply directly.}. In order to construct globally defined differential operators, we will carefully extend the structures on $\ws$ to its double.

Let $\inter M$ denote the interior of a manifold $M$. Given $r> 0$, take a sufficiently small $\epsilon>0$ such that the collar neighborhood $\inter\calw^\infty_{r+\epsilon}\setminus \calw^\infty_{r-\epsilon}$ is isomorphic to $\rho^{-1}(r)\times (-\epsilon,\epsilon)$ as smooth fibrations over $W^\infty$. Let $\calv$ be another copy of $\wsin$. By gluing $\calv$ to $\wsin$ along $\rho^{-1}(r)$ and utilizing the natural reflection on the above collar neighborhood, we construct a smooth double manifold $\wwin$ (\cref{doubleofv2}). View $\inter\calw^\infty_{r+\epsilon}$ as a subset of $\wwin$. Then \eqref{embedsec} induces
\begin{equation}\label{iotaembed}
    \iota\colon W^\infty \hookrightarrow \wwin.
\end{equation} 
The projection $\bar{\pi}\colon \wsin\to W^\infty$ naturally extends to a proper smooth submersion
\begin{equation}\label{dcm}
    \widetilde{\pi}_r\colon\wwin \to W^\infty.
\end{equation}
It is a smooth fibration by Ehresmann's Fibration Theorem \cite{ehr51}.
Each fiber of $\wwin$ is the double of the corresponding fiber in $\wsin$. Let $\ww$ be the compact band
\begin{equation}\label{band1}
    \ww \coloneqq \widetilde{\pi}_r^{-1}(W)
\end{equation} 
with $\partial_\pm \ww = \widetilde{\pi}_r^{-1}(\partial_\pm W)$. The subset $\wwin \setminus \ww$ is diffeomorphic to $\partial\ww \times (1,\infty)$.

\begin{figure}[htbp]
    \centering
    \includegraphics[width=\textwidth]{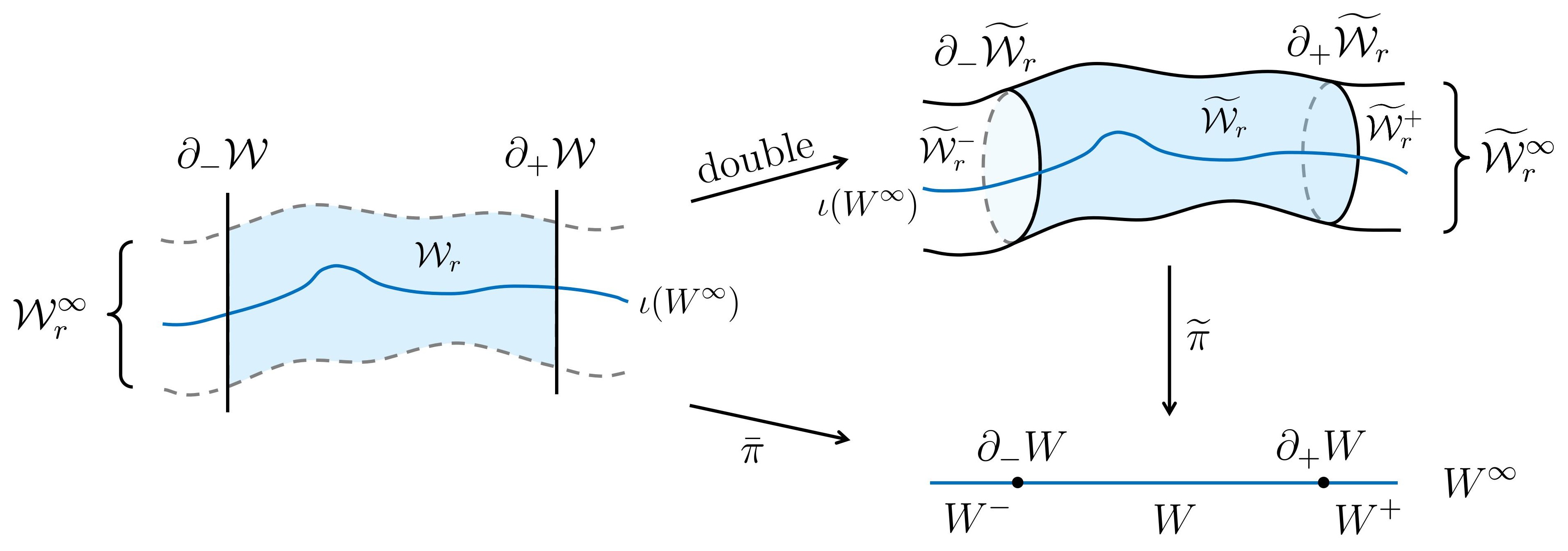}
    \caption{The Connes fibration of $W^\infty$ and the double of $\win$.}
    \label{doubleofv2}
\end{figure} 

The splitting $T^H\calw^\infty|_{\wsin} = \calf|_{\wsin}\oplus\f|_{\wsin}$ can be extended to a splitting $T^H\wwin = \hfz \oplus \hf$ which lifts $TW^\infty = F \oplus F^\perp$. In general, $\hfz$ might not be integrable outside $\wsin$. Let $g^{\hfz}\coloneqq\widetilde{\pi}_r^*g^F$ be the pullback metric on $\hfz$. Let $\hff\coloneqq T^V\wwin$ denote the vertical tangent bundle, which is also spin. Extend $g^{\f} \oplus g^{\ff}$ to a smooth metric $g^{\hf} \oplus g^{\hff}$ on $\wwin$. Endow $\wwin$ with the Riemannian metric $g^{T\wwin}$ defined by
\begin{equation}\label{spmet2}
    T\wwin = \hfz \oplus \hf \oplus \hff, \quad g^{T\wwin} = g^{\hfz} \oplus g^{\hf} \oplus g^{\hff}.
\end{equation}
By construction, we have $T\wwin|_{\wsin} = T\calw^\infty|_\wsin$ and $g^{T\wwin}|_{\wsin} = g^{T\calw^\infty}|_{\wsin}$.

Since $\ff$ is spin and of even rank, we have the $\bbz_2$-graded Hermitian (dual) bundle of spinors $S(\ff)^* = S_+(\ff)^*\oplus S_-(\ff)^*$. It is equipped with the Hermitian connection $\nabla^{S(\ff)^*} = \nabla^{S_+(\ff)^*} \oplus \nabla^{S_-(\ff)^*}$ induced by $\nabla^{\ff}$ and a bundle homomorphism $\widehat{c}\colon \ff \to \en{S(\ff)^*}$ such that for $X,Y\in \Gamma(\ff)$, the endomorphism $\widehat{c}(X)$ is self-adjoint and
\begin{equation}\label{hatc}
    \widehat{c}(X)^2 = |X|^2,\quad \bcu{\nabla_X^{S(\ff)^*},\widehat{c}(Y)} = \widehat{c}\bket{\nabla_X^\ff Y}.
\end{equation}

Let ${f}_0\colon[0,1]\to[0,1]$ be a smooth function such that ${f}_0\equiv 0$ on $\bcu{0,\frac{1}{4}}$ and ${f}_0\equiv 1$ on $\bcu{\frac{1}{2},1}$. Then ${f}_0\big(\frac{\rho}{r}\big)\colon \wsin\to[0,1]$ extends to a globally defined smooth function
\begin{equation}\label{funf}
    f\colon\wwin \to [0,1]
\end{equation}
such that $f\equiv 1$ on $\wwin\setminus\calw^\infty_{\frac{r}{2}}$. 
    
Given a Hermitian vector bundle $E$ over $W^\infty$, let $\bar{\pi}^*{E}$ denote the pullback Hermitian vector bundle over $\calw^\infty$. Following \cite[\S1b]{bz93} (cf.\ \cite[\S2.3]{zhang17}, \cite[\S3]{zhang23}), we take a Hermitian vector bundle $\caly$ over $\wsin$ such that\footnote{Such a vector bundle $\caly$ always exists; see, e.g., \cite[IV.~Thm.~3.3]{hir76}.} $\bket{S_+\ket{\ffin}^* \otimes \bar{\pi}^*E} \oplus \caly$ is a trivial vector bundle over $\wsin$.
Then $\bket{S_-\ket{\ffin}^* \otimes \bar{\pi}^*{E}} \oplus \caly$ is also a trivial vector bundle over $\wsin \setminus \iota(W^\infty)$, under the identification $\big(\widehat{c}(\sigma)\otimes \id_{\bar{\pi}^*E}\big) \oplus \id_{\caly}$, where $\sigma$ is the unit vector field defined before \eqref{rho}.

Extending the above trivial vector bundles to $\wwin$, we can construct a $\bbz_2$-graded Hermitian vector bundle $\xi = \xi^+ \oplus \xi^-$ over $\wwin$ satisfying (compare \cite[(2.30)]{zhang17})
\begin{equation}\label{xixi}
    \xi^\pm = \big(S_\pm(\ff)^* \otimes \bar{\pi}^*{E}\big) \oplus \caly\quad \text{on } \wsin,
\end{equation}
and construct an odd self-adjoint bundle endomorphism $V^{\xi}\in \Gamma(\en{\xi})$ such that
\begin{equation}\label{ww}
    \begin{cases}
        V^{\xi}|_{\xi^\pm} =  \big(f\,\widehat{c}(\sigma) \otimes \id_{\bar{\pi}^*{E}}\big) \oplus \id_{\caly}, &\text{on } \wsin,\\
        (V^{\xi})^2=\id, &\text{on } \wwin\setminus \calw^\infty_{\frac{r}{2}}.
    \end{cases}
\end{equation}

Take Hermitian connections $\nabla^{\caly}$ and $\nabla^E$ on $\caly$ and $E$, respectively. Let $\nabla^{\bar{\pi}^*{E}}$ be the pullback Hermitian connection on $\bar{\pi}^*E$. Take an even Hermitian connection $\nabla^{\xi}$ on the Hermitian vector bundle $\xi\to\wwin$ such that
\begin{equation}\label{metcon}
    \nabla^{\xi}|_{\xi^\pm} = \bket{\nabla^{S_\pm(\ffin)^*}\otimes \id_{\bar{\pi}^*{E}} + \id_{S_\pm(\ffin)^*}\otimes\nabla^{{\bar{\pi}^*{E}}}} \oplus \nabla^{\caly} \quad\text{on } \wsin.
\end{equation}
For $X\in\Gamma(T\wsin)$, the second formula in \eqref{hatc} implies that 
\begin{equation}\label{changefor}
    \bcu{\nabla^{\xi}_X,V^{\xi}}\big|_{\xi^\pm} = \bbket{\bbket{X(f) \, \widehat{c}(\sigma) + f\,\widehat{c}\bket{\nabla^\ff_X\sigma}} \otimes \id_{\bar{\pi}^*E}} \oplus 0 \quad\text{on } \wsin.
\end{equation}

Given a Clifford module $(\mu,\nabla^\mu,c^{\,\mu})$ on $(\wwin,g^{T\wwin})$, we obtain a Clifford module $(\mu\otimes\xi, \nabla^{\mu\otimes\xi},c^{\,\mu}\otimes\tau^\xi)$ by \cref{twi}, 
where $\tau^\xi = \pm \id|_{\xi^\pm}$. Let $D^{\mu\otimes\xi}$ denote the Dirac operator on $\mu\otimes\xi$. We write $V^{\mu\otimes\xi} = \id_\mu \otimes V^\xi$. Since $V^\xi$ is odd, we have
\begin{equation}\label{changefor2}
    D^{\mu\otimes\xi}\,V^{\mu\otimes\xi} + V^{\mu\otimes\xi}\,D^{\mu\otimes\xi}
    =
    \sum_{j=1}^{n} {c^{\,\mu}(e_j) \otimes \bket{\tau^\xi\bcu{\nabla^{\xi}_{e_j},V^{\xi}}}}
\end{equation} 
on $\wsin$, where $n = \dim \wsin$ and $\{e_j\}_{j=1}^n$ is any orthonormal basis on $\wsin$ with respect to $g^{T\calw^\infty}|_{\wsin}$. By \eqref{changefor}--\eqref{changefor2}, we have on $\wsin$ that (cf.\ \cite[(2.26)]{zhang17})
\begin{align}\label{estmm}
    \begin{split}
        \babs{D^{\mu\otimes\xi} \, V^{\mu\otimes\xi} + V^{\mu\otimes\xi} \, D^{\mu\otimes\xi}}
        &\le \sum_{j=1}^{n}\babs{\bcu{\nabla^{\xi}_{e_j},V^{\xi}}}\\
        &\le \sum_{j=1}^{n}\bbabs{e_j(f) \,\widehat{c}(\sigma) + f\,\widehat{c}\bket{\nabla^\ff_{e_j}\sigma}}\\
        &\le \sum_{j=1}^{n}\bbbac{\bbabs{{f}'_0\bbket{\frac{\rho}{r}}} \cdot \frac{\abs{e_j(\rho)}}{r} + f\babs{\nabla^{\ff}_{e_j}\sigma}}.
    \end{split}
\end{align}
Meanwhile, we have on $\calw^\infty_r\setminus\iota(W^\infty)$ that (cf.\ \cite[(2.25)]{zhang17})
\begin{align}\label{estrr}
        \begin{split}
            |c^{\,\mu}\otimes\tau^\xi(\di\rho)| = \bbbabs{\sum_{j=1}^{n} c^{\,\mu}\bket{e_j(\rho)\cdot e_j}\otimes\tau^\xi} 
            \le \sum_{j=1}^{n} |e_j(\rho)|.
        \end{split}
    \end{align}

\subsection{Width Estimates and Proof of Theorem \ref{widmain}}\label{4.2}

Under the conditions of \cref{widmain}, we further assume $F$ has codimension $\ge 1$. Let $W^\infty$ be the noncompact, foliated manifold defined in \eqref{vinfty} obtained by attaching cylindrical ends to $W$. Then we have the following smooth fibrations defined in \eqref{cm} and \eqref{dcm}:
\begin{equation}
    \bar{\pi}\colon \calw^\infty\to W^\infty,
    \quad
    \widetilde{\pi}_r\colon \wwin\to W^\infty\; (r>0).
\end{equation}
For $0<\bv\le 1$, we rescale the metrics \eqref{splittingmetric} and \eqref{spmet2} as:
\begin{equation}\label{osm31}
    g^{T\calw^\infty}_\bv \coloneqq \beta^2 g^\calf \oplus \frac{g^{\calf_1^\perp}}{\varepsilon^2} \oplus g^{\calf_2^\perp},\quad
    g^{T\wwin}_\bv \coloneqq \beta^2 g^{\hfz} \oplus \frac{g^{\hf}}{\varepsilon^2} \oplus g^{\hff}.
\end{equation}
Let $g^{T\calw}_\bv$, $g^{T\ww}_\bv$ be the restriction of $g^{T\calw^\infty}_\bv$, $g^{T\wwin}_\bv$, respectively. Let $|\cdot|_\bv$ denote the induced fiberwise norms. We will omit the subscripts when $\beta=\varepsilon = 1$.

\begin{lem}\label{dist}
    Given $r>0$ and a metric $g^{F^{\perp}}$ on $F^\perp$, we define $g_\beta = g^F \oplus \frac{1}{\beta^2}g^{F^\perp}$.
    Let $\wid_\beta(W) \coloneqq \dist_\beta(\partial_-W,\partial_+W)$ denote the width of the band $(W,g_\beta)$. Then there exists a number $a_r>0$ such that the width of the band $(\ww,g^{T\ww}_\bv)$ satisfies
    \begin{equation}\label{dis7}
        \wid_\bv\ket{\ww} \ge \beta \cdot \wid_\beta(W)
    \end{equation}
    for all $0<\beta\le 1$ and $0<\varepsilon<\min\{a_r^{-1},1\}$.
\end{lem}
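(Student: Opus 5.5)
Fix a metric $g^{F^\perp}$ on $F^\perp$ and compare $g^{T\ww}_\bv$ with the pullback of $\beta^2 g_\beta = \beta^2 g^F\oplus g^{F^\perp}$ under $\widetilde{\pi}_r$. Since $\widetilde{\pi}_r$ maps $\partial_\pm\ww$ onto $\partial_\pm W$, every curve in $\ww$ joining $\partial_-\ww$ to $\partial_+\ww$ projects to a curve in $W$ joining $\partial_-W$ to $\partial_+W$. It therefore suffices to prove the pointwise inequality
\begin{equation*}
    \big|\di\widetilde{\pi}_r(X)\big|_{\beta^2 g_\beta} \le |X|_\bv \qquad\text{for all } X\in T\ww
\end{equation*}
whenever $\varepsilon<\min\{a_r^{-1},1\}$. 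The projected curve then has length at most that of the original curve, and the length of any curve in $(W,\beta^2g_\beta)$ joining the two boundary parts is at least $\beta\cdot\wid_\beta(W)$. Taking the infimum gives \eqref{dis7}.

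To prove the pointwise inequality, decompose $X = X_0 + X_1 + X_2$ according to the splitting \eqref{spmet2}, with $X_0\in\hfz$, $X_1\in\hf$ and $X_2\in\hff$. The vertical part $X_2$ is killed by $\di\widetilde{\pi}_r$. The splitting $T^H\wwin = \hfz\oplus\hf$ lifts $TW^\infty = F\oplus F^\perp$, so $\di\widetilde{\pi}_r X_0\in F$ and $\di\widetilde{\pi}_r X_1\in F^\perp$, and these two images are orthogonal in $g_\beta$. Since $g^{\hfz} = \widetilde{\pi}_r^*g^F$, we get $|\di\widetilde{\pi}_r X_0|^2_{\beta^2 g^F} = \beta^2 g^{\hfz}(X_0,X_0)$, which is exactly the $\hfz$-component of $|X|^2_\bv$. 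It remains to bound the transverse part:
\begin{equation*}
    g^{F^\perp}(\di\widetilde{\pi}_r X_1,\di\widetilde{\pi}_r X_1)\le \varepsilon^{-2} g^{\hf}(X_1,X_1).
\end{equation*}

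The key step, and the only non-formal one, is to compare $g^{\hf}$ with $\widetilde{\pi}_r^*g^{F^\perp}$. The bundle $\hf$ is identified with $\widetilde{\pi}_r^*F^\perp$ via $\di\widetilde{\pi}_r$, and $g^{\hf}$ is a smooth metric on it. On $\calw^\infty_r$ this metric is the first metric of the quadruple, and outside it is a chosen extension. The band $\ww$ is compact, because $W$ is compact and the fibers of $\widetilde{\pi}_r$ are compact doubles. By compactness there is therefore a constant $a_r>0$ with
\begin{equation*}
    \widetilde{\pi}_r^*g^{F^\perp}\le a_r^2\, g^{\hf}\quad\text{on } \ww.
\end{equation*}
Hence
\begin{equation*}
    g^{F^\perp}(\di\widetilde{\pi}_r X_1,\di\widetilde{\pi}_r X_1)\le a_r^2\, g^{\hf}(X_1,X_1)\le \varepsilon^{-2}g^{\hf}(X_1,X_1)
\end{equation*}
once $\varepsilon\le a_r^{-1}$. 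The constant $a_r$ depends on $r$ because the fibers grow with $r$. Combining the three components gives the pointwise inequality, which holds for every $\beta\in(0,1]$.

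I do not expect a serious obstacle. The points to check carefully are the following.
\begin{enumerate}[label={\rm{(\roman*)}}]
    \item The lift $\hf$ really projects isomorphically onto $F^\perp$, and not merely onto some complement of $F$. This holds because the extended splitting was chosen to lift $TW^\infty=F\oplus F^\perp$.
    \item The requirement $\varepsilon<1$ is only needed so that $\varepsilon$ lies in the range of \cref{3.3}.
    \item Piecewise smooth curves project to piecewise smooth curves, so the lengths in the two bands are comparable in the way used above.
\end{enumerate}
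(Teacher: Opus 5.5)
Your proposal is correct and follows the paper's proof. The paper likewise defines $a_r$ as the supremum of $|\di\widetilde{\pi}_r(X)|/|X|$ over $0\ne X\in\hf|_{\ww}$, which is finite by compactness, and proves exactly your pointwise bound $|\di\widetilde{\pi}_r(X)|_\beta\le\beta^{-1}|X|_\bv$ by splitting into the $\hfz$, $\hf$, $\hff$ components. The only difference is cosmetic: the paper applies the bound to a minimizing geodesic realizing $\wid_\bv(\ww)$ obtained from \cref{key6}, whereas you take the infimum over all admissible curves, which avoids needing the existence of a minimizer.
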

\begin{proof}
    For the differential $\di\widetilde{\pi}_r \colon T\ww\to TW$, since $\ww$ is compact, we have
    \begin{equation}
        a_r 
        \coloneqq 
        \sup_{0\ne X\in \hf|_{\ww}}
        \bbbac{\frac{|\di\widetilde{\pi}_r(X)|}{|X|}}<\infty.
    \end{equation}
    By \cref{key6}, there exists a geodesic $\gamma\colon [0,1] \to \ww$ in $(\ww,g^{T\ww}_\bv)$ realizing $\wid_\bv\ket{\ww}$. 
    When $0<\varepsilon < \min\{a_r^{-1},1\}$, it follows from $g^{\hfz} = \widetilde{\pi}_r^*g^F$ that
    \begin{equation}\label{dis621}
        \frac{|\di \widetilde{\pi}_r (\dot{\gamma})|_\beta }{|\dot{\gamma}|_\bv} 
        = \bbbbac{
            \frac{\babs{
                \di \widetilde{\pi}_r(p(\dot{\gamma}))}^2
                + 
                \beta^{-2}\babs{\di \widetilde{\pi}_r(p_1^\perp(\dot{\gamma}))}^2
                }{
                \beta^2\babs{p(\dot{\gamma})}^2 
                + 
                \varepsilon^{-2}\babs{p_1^\perp(\dot{\gamma})}^2 
                + 
                \babs{p_2^\perp(\dot{\gamma})}^2}
                }^{\frac{1}{2}}  
        \le \frac{1}{\beta},
    \end{equation}
    where $p\colon T\ww\to \hfz$, $p_b^\perp\colon T\ww\to \widetilde{\calf}_b^\perp$ $(b=1,2)$ are orthogonal projections. Then 
    \begin{align}\label{dis62}
            \wid_\beta(W) 
            \le 
            \int_{0}^{1}|\di \widetilde{\pi}_r (\dot{\gamma})|_\beta
            \le
            \frac{1}{\beta} \int_{0}^{1}|\dot{\gamma}|_\bv
            =
            \frac{1}{\beta}\cdot \wid_\bv\ket{\ww}.
    \end{align}
    The result follows.
\end{proof}

To prove \cref{widmain}, we combine \cite{zhang17,cz24}. By the assumption of infinite vertical $\widehat{A}$-cowaist, we will construct deformed Dirac operators on $\ww$ such that the local elliptic boundary value problems \eqref{bdproblem} have nonvanishing indices. Now suppose to the contrary that $\wid_F(W)>t_+-t_-$. Then there is enough room to choose suitable $\psi$ such that the boundary value problems \eqref{bdproblem} is ``invertible'' in the adiabatic limit when $r>0$ is sufficiently large, which is a contradiction.

\begin{proof}[Proof of \cref{widmain}]
    If $\dim W$ is even, the odd-dimensional foliated band $(W\times\bbs^1,\pro^*_W F)$ has infinite vertical $\widehat{A}$-cowaist by \cref{ahat} and satisfies the curvature bounds \eqref{scalmean} for the pullback metric $\pro_W^*g^F$ on $\pro_W^*F$. Meanwhile, we have
    \begin{equation}\label{bandfol1}
        \wid_F(W) = \wid_{\pro^*_W F}(W\times \bbs^1).
    \end{equation}
    So we can assume $\dim W$ is odd\footnote{\label{stable}Indeed, the same argument implies that \cref{widmain} holds when the foliated band $(W,F)$ has \emph{infinite stable vertical $\widehat{A}$-cowaist}, that is, there exists an integer $k\ge 0$ such that $k+\dim W$ is odd and the foliated band $(W\times\bbt^k,\pro_W^*F)$ has infinite vertical $\widehat{A}$-cowaist in the sense of \cref{ahat}\,(1).} without loss of generality. If $F=TW$, then \cref{widmain} reduces to \cite[Thm.~7.6]{cz24}. We further assume $2\le \rk F < \dim W$. 
    
    Suppose to the contrary that $\wid_F(W) > t_+ - t_-$. The proof has three steps.
    
    {\bf Step 1:} Recall the splitting \eqref{bdreq}. Fix a metric $g^{F^\perp}$ on $F^\perp$ and define $g_\beta = g^F \oplus \frac{1}{\beta^2}g^{F^\perp}$. By \cref{key6}, we have 
    \begin{equation}\label{limitwid}
        \lim_{\beta\to 0^+} \wid_\beta(W) = \wid_F(W).
    \end{equation}
    By \cref{dist} and \eqref{limitwid}, there exists $\beta_0,l>0$ such that
    \begin{equation}\label{dist2}
        \wid_\bv(\ww) > 2\beta l > \beta(t_+ - t_-)
    \end{equation}
    for all $0<\beta<\beta_0$ and $0<\varepsilon<\min\{a_r^{-1},1\}$. Set $t_0 = \frac{1}{2}(t_+ + t_-)$. Since $t_\pm \in (-\frac{\pi}{\alpha q}, \frac{\pi}{\alpha q})$, we can further require that the number $l>0$ in \eqref{dist2} satisfies
    \begin{equation}\label{control1}
        -\frac{\pi}{\alpha q}<t_0 -l <t_- < t_+< t_0+l<\frac{\pi}{\alpha q}.
    \end{equation}

    By \cite[Lem.~7.2]{cz24} and \eqref{dist2}, there exists a 1-Lipschitz smooth function
    \begin{equation}
        w\colon \bket{\ww, g^{T\ww}_\bv} \to \bket{[-\beta l,\beta l], \di t^2}
    \end{equation}
    such that $w(\pmww) = \pm \beta l$. In what follows, we identify the differentials, like $\di w$, with the gradient with respect to $g^{T\ww}_\bv$. 

    As in \cite[(7-3)]{cz24}, by \eqref{control1}, take $\eta_0 <1$ sufficiently close to $1$ such that
    \begin{equation}\label{tan2}
        \pm \,\eta_0 \cdot \tan\!\bbbket{\frac{\alpha q(t_0 \pm \eta_0 l)}{2}} > \pm \tan\!\bbket{\frac{\alpha qt_{\pm}}{2}}.
    \end{equation}
    Let $\psi\colon \ww \to \bbr$ be the smooth function  defined by
    \begin{equation}\label{psi}
    \psi = \frac{1}{\beta}\,\psi_{\eta_0}\bbket{\frac{w}{\beta}},
    \quad \text{where }
    \psi_{\eta_0}(t) = \frac{\alpha q\eta_0}{2}\tan\!\bbbket{\frac{\alpha q(t_0 + \eta_0 t)}{2}}.
    \end{equation}
    Since $|\di w|_\bv \le 1$, we have on $\ww$ that 
    \begin{align}\label{psi1}
        \begin{split}
            \psi^2 - |\di\psi|_\bv 
            &\ge
            \frac{1}{\beta^2}\bbbac{\psi^2_{\eta_0}\bbket{\frac{w}{\beta}} - \psi'_{\eta_0}\bbket{\frac{w}{\beta}}} 
            = -\frac{\alpha^2q^2\eta_0^2}{4\beta^2}.
        \end{split}
    \end{align}
    By \eqref{scalmean}, \eqref{tan2} and the fact that $H^\calf = \bar{\pi}^*H^F$ on $\partial\calw$, we have on $\partial_\pm \ww \cap \calw_r$ that
    \begin{equation}\label{psi2}
        \frac{qH^{\calf}}{2\beta} \pm \psi \ge \frac{1}{\beta} \bbbac{\mp \frac{\alpha q}{2}\tan\!\bbket{\frac{\alpha q t_{\pm}}{2}} \pm \psi_{\eta_0}(\pm l)}> 0.
    \end{equation}

    {\bf Step 2:}  
    Since the odd-dimensional foliated band $(W,F)$ has infinite vertical $\widehat{A}$-cowaist, for each $\varpi>0$, there is a Hermitian vector bundle ${E}_\varpi$ over $W$ such that
    \begin{equation}\label{ind3}
        \babs{R^{{E}_\varpi}_F} < \varpi \,\text{ on } W
        \quad\text{and}\quad
        \binn{\widehat{A}(T\partial_-W)\ch({E}_\varpi), [\partial_- W]} \ne 0,
    \end{equation}
    where the norm $\babs{R^{{E}_\varpi}_F}$ is defined in \eqref{normcur} and is associated with $g^F$.
    
    We extend $E_\varpi$ to a Hermitian vector bundle over $W^\infty$, which we still denote by $E_\varpi$. Taking $E = {E}_\varpi$ in \eqref{xixi}, we obtain a $\bbz_2$-graded Hermitian vector bundle
    \begin{equation}
        \xi_\varpi = \xi_\varpi^+\oplus \xi_\varpi^-
    \end{equation}
    over $\wwin$ and odd self-adjoint $V^{\xi_\varpi}\in \Gamma(\en{\xi_\varpi})$ such that
    \begin{align}
        \xi^\pm_\varpi &= \big(S_\pm(\ff)^* \otimes \bar{\pi}^*{E_\varpi}\big) \oplus \caly_\varpi \quad \text{on } \wsin,\label{xibv}\\
        V^{\xi_\varpi}|_{\xi^\pm_\varpi} &=  \big(f\,\widehat{c}(\sigma) \otimes \id_{\bar{\pi}^*{E_\varpi}}\big) \oplus \id_{\caly_\varpi} \quad \text{on } \wsin,\label{wbv}
    \end{align}
    where $f\colon \wwin \to \bbr$ is defined in \eqref{funf}. Meanwhile, by \eqref{ww}, we have
    \begin{equation}\label{vbv2}
        (V^{\xi_\varpi})^2=\id \quad \text{on } \wwin\setminus \calw^\infty_{\frac{r}{2}}.
    \end{equation}

    Suppose $\dim W =m$ and $\dim \calw = n$. Recall that $\rk \calf = q$. Let $\{\we_j\}_{j=1}^n$ denote a local orthonormal frame on $(\calw,g^{T\calw})$ such that $\{\we_j\}_{j=1}^q$, $\{\we_j\}_{j=q+1}^m$, $\{\we_j\}_{j=m+1}^n$ spans $\calf, \f, \ff$ respectively. On $\partial \calw$, we further require that $\we_1 = \wnu$ is the inward unit normal. We define $\nu = \beta^{-1}{\wnu}$, and define $\{e_j\}_{j=1}^n$ as in \eqref{scale}:
\begin{equation}\label{scale1}
        e_j =
        \begin{cases}
            \beta^{-1}\we_j, & \text{if } 1\le j \le q,\\
            \varepsilon\we_j, & \text{if } q+1\le j \le m,\\
            \we_j, & \text{if } m+1\le j \le n.
        \end{cases}
    \end{equation}

    By our constructions, $T\wwin$ is spin and $n =\dim\ww$ is odd. There are two choice for the Hermitian bundle of spinors. We take
    \begin{equation}\label{mudef}
        \mu_\bv = S^1_\bv(T\wwin)
    \end{equation}
    to be the Hermitian bundle of spinors that satisfies
    \begin{equation}\label{vol}
        \bket{\sqrt{-1}\,}^{\frac{n+1}{2}}\,c^{\,\mu_\bv}(e_1)\, \cdots \,c^{\,\mu_\bv}(e_{n}) = -\id_{\mu_\bv},
    \end{equation}
    where $c^{\,\mu_\bv}$ denotes the Clifford action. For the Clifford module $\mu_{\bv}|_{\partial_-\ww}$ defined as in \eqref{bdclif}, there is an identification
    \begin{equation}\label{oddevenclif}
        \mu_{\bv}|_{\partial_-\ww} = S_\bv(T\partial_-\ww)
    \end{equation}
    such that the $\bbz_2$-grading $c^{\,\mu_\bv}(\nu)/\sqrt{-1}$ on $\mu_{\bv}|_{\partial_-\ww}$ is identified with the canonical $\bbz_2$-grading on the Hermitian bundle of spinors $S_\bv(T\partial_-\ww)$. As in \eqref{dirdou1}--\eqref{cldouble1}, we obtain a Clifford module $(S_\bv,\nabla^\bv,c_\bv)$ on $\bket{\wwin,g^{T\wwin}_\bv}$ with $\bbz_2$-grading
    \begin{equation}\label{s1}
        S_\bv = \ket{\mu_\bv\otimes\xi_\varpi} \oplus \ket{\mu_\bv\otimes\xi_\varpi}
    \end{equation}
    where the Clifford action on $\mu_\bv\otimes\xi_\varpi$ is defined by $c^{\,\mu_\bv}\otimes\tau^{\xi_\varpi}$. Let $D_\bv$ denote the Dirac operator on $S_\bv$ defined as in \eqref{dir2}.

    For each $0<\delta< 1$, let $h_{\delta,0} \colon [0,1] \to [\delta,1]$ be a smooth function such that ${h}_{\delta,0}  \equiv 1$ on $\big[0,\frac{3}{4}\big]$, while ${h}_{\delta,0} \equiv\delta$ on $\big[\frac{7}{8},1\big]$. Recall that the function $\rho$ is defined in \eqref{rho}. Then the function ${h}_{\delta,0}  \bket{\frac{\rho}{r}}\colon \ws \to [\delta,1]$ extends to a globally defined smooth function $h_\delta\colon \ww \to [\delta,1]$ such that $h_\delta\equiv \delta$ on $\ww\setminus\calw_{\frac{7r}{8}}$; see \cref{relation}.
    \begin{figure}[ht]
        \centering
        \includegraphics[width=0.38\textwidth]{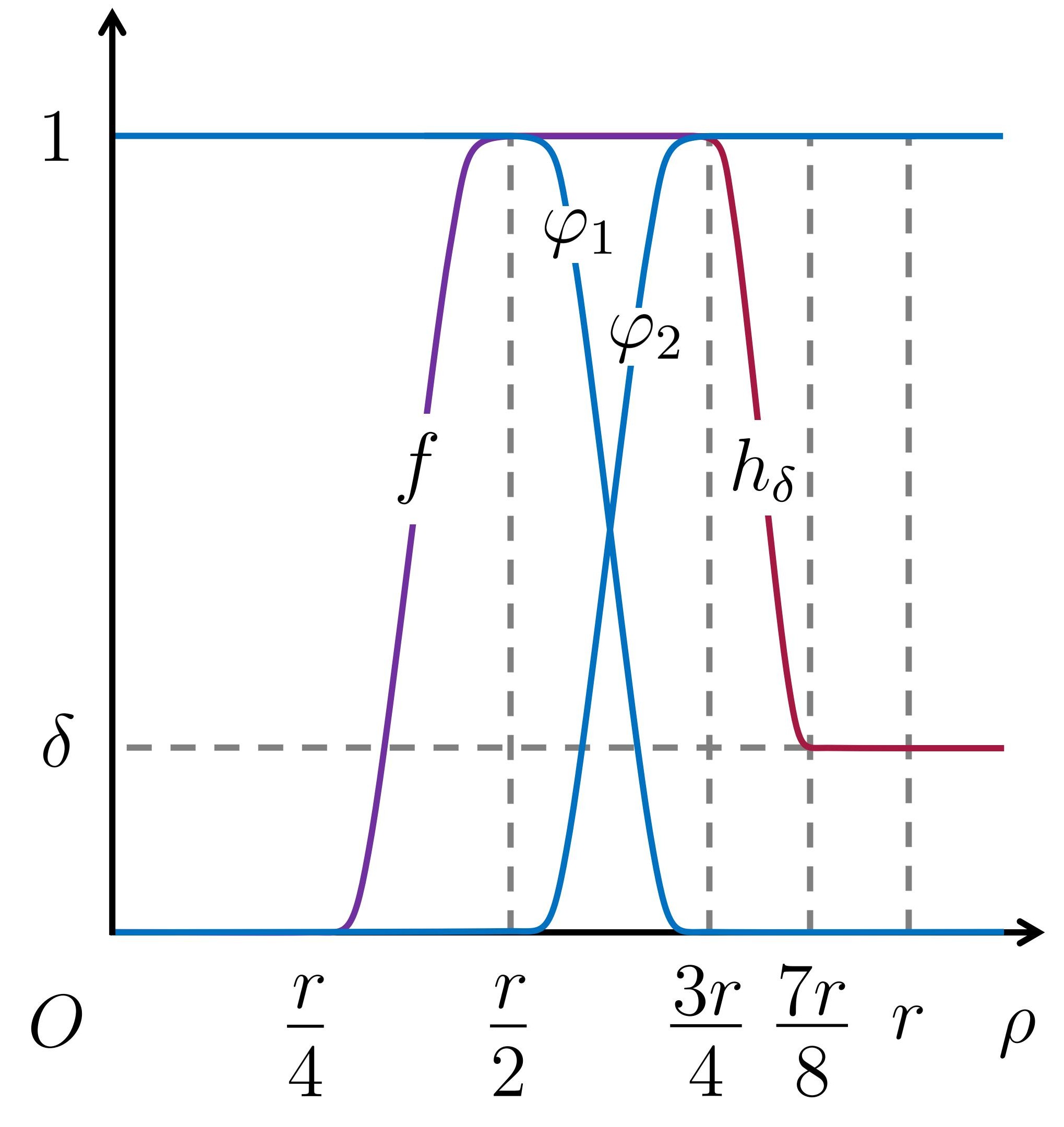}
        \caption{Cut-off functions.} 
        \label{relation}
    \end{figure} 
    
    For $T=\beta^{-1}$ and the function $\psi\colon \ww\to \bbr$ defined in \eqref{psi}, consider the following deformed Dirac operator defined as in \eqref{defd1}:
    \begin{equation}\label{bdef}
        \calb_{h_\delta,\psi,\beta^{-1}}
        = h_\delta D_\bv h_\delta + \psi\vartheta + \beta^{-1}V.
    \end{equation}
    For the choice of signs $s\equiv \pm 1$ on $\pmww$, we obtain a self-adjoint Fredholm operator
    \begin{align}\label{bts}
        \calb_{h_\delta,\psi,\beta^{-1},s}: \rmh^1_{\vartheta,s}(\ww,S_\bv)\to \rml^2(\ww,S_\bv)
    \end{align}
    as in \eqref{bdproblem}. By \cref{band2} and the Atiyah-Singer index theorem \cite{as68} (cf.\ \cite[III.~Thm.~13.8]{lm89}),  \eqref{ind3} and \eqref{oddevenclif}, we have
    \begin{align}\label{ind11}
        \begin{split}
            \ind \calb_{h_\delta,\psi,\beta^{-1},s} 
            &= \ind \bket{D^{S_\bv(T\partial_-\ww)\widehat{\otimes}\xi_\varpi}}\\
            &= \binn{\widehat{A}(T\partial_-\ww)\bket{\ch(\xi^+_\varpi) - \ch(\xi^-_\varpi)},[\partial_-\ww]}\\
            &= \binn{\widehat{A}(T\partial_- W)\ch (E_\varpi),[\partial_-W]}\ne 0;
        \end{split}
    \end{align}
    cf.\ \cite[(2.34)]{zhang17} and \cite[(1.5)]{zhang20}.

    {\bf Step 3:} For simplicity, let $|\cdot|$ and $\|\cdot\|$ denote the fiberwise Hermitian norm and the $\rml^2$-norm on $S_\bv|_\ww$ with respect to $g^{T\ww}_\bv$, respectively. Following \cite[p.~115]{bl91}, let ${\varphi}_0\colon[0,1]\to[0,1]$ be a smooth function such that ${\varphi}_0\equiv 0$ on $\bcu{0,\frac{1}{2}}$ and ${\varphi}_0\equiv 1$ on $\bcu{\frac{3}{4} ,1}$. Let $\varphi_1,\varphi_2\colon\ww \to \bbr$ be smooth functions such that
    \begin{equation}
        \varphi_1 = \frac{1- {\varphi}_0\bket{\frac{\rho}{r}}}{\bbket{{\varphi}_0\bket{\frac{\rho}{r}}^2 + \bket{1- {\varphi}_0\bket{\frac{\rho}{r}}}^2}^{\frac{1}{2}}},
        \quad
        \varphi_2 = \frac{{\varphi}_0\bket{\frac{\rho}{r}}}{\bbket{{\varphi}_0\bket{\frac{\rho}{r}}^2 + \bket{1- {\varphi}_0\bket{\frac{\rho}{r}}}^2}^{\frac{1}{2}}},
    \end{equation}
    on $\ws$, and $\varphi_1,\varphi_2$ are constant outside $\calw_{\frac{3r}{4}}$ (\cref{relation}). Then we have 
    \begin{equation}\label{phisq}
        \varphi_1^2 + \varphi_2^2 \equiv 1.
    \end{equation} 

    For $y,z\in \Gamma_\cm(S_\bv)$ we have 
    \begin{align}\label{normineq}
        \frac{1}{2}\|y + z\|^2 \le \frac{1}{2}(\|y\| + \|z\|)^2 \le \|y\|^2 + \|z\|^2.
    \end{align}
Recall that $\supp\ket{\di\varphi_1},\supp\ket{\di\varphi_2} \subset \calw_{\frac{3r}{4}}$ and $h_\delta\equiv 1$ on $\calw_{\frac{3r}{4}}$. For $u\in \Gamma_\cm(S_\bv)$ and $b=1,2$, it follows from \eqref{bdef}, \eqref{normineq} and $D_\bv(\varphi_b u) = \varphi_b D_\bv u + c_\bv(\di \varphi_b)u$ that
\begin{align}\label{estib12}
    \begin{split}
        \frac{1}{2}\|\mathcal{B}_{h_\delta,\psi,\beta^{-1}}(\varphi_b u)\|^2 
        &\le 
        \|\varphi_b \mathcal{B}_{h_\delta,\psi,\beta^{-1}}\,u\|^2 + \|c_\bv(\di\varphi_b)u\|^2.
    \end{split}
\end{align}
By \eqref{phisq} and \eqref{estib12}, we have (cf.\ \cite[(2.46)]{zhang17})
\begin{align}\label{est7}
\begin{split}
    \|\mathcal{B}_{h_\delta,\psi,\beta^{-1}}\,u\|^2
    &= \|\varphi_1 \mathcal{B}_{h_\delta,\psi,\beta^{-1}}\,u\|^2+ \|\varphi_2 \mathcal{B}_{h_\delta,\psi,\beta^{-1}}\,u\|^2\\
    &\geq 
    \frac{1}{2}\| \mathcal{B}_{h_\delta,\psi,\beta^{-1}}(\varphi_1 u)\|^2+ \frac{1}{2}\| \mathcal{B}_{h_\delta,\psi,\beta^{-1}}(\varphi_2 u)\|^2\\
    &\qquad\qquad -\|c_\bv({\rm d}\varphi_1)u\|^2 -\|c_\bv({\rm d}\varphi_2)u\|^2.
\end{split}
\end{align}

    Now we analyze the asymptotic behavior of \eqref{est7} when $\bv>0$ are small. 
    
    Combining \cref{key3} with \eqref{cldouble1}--\eqref{oddend}, \eqref{estmm}--\eqref{estrr}, \eqref{scale1} and \eqref{s1}, there exists $C>0$ independent of $\bv,\varpi,r>0$ such that we have (cf.\ \cite[(2.27)]{zhang17})
    \begin{equation}\label{asymp1}
        |c_\bv(\di \varphi_b )| \le \frac{C}{\beta r} + O_r(1)\quad \text{and} \quad |[D_\bv,V]| \le \frac{C}{\beta r} + O_r\ket{1}
    \end{equation}
    on $\ws$, where the bounding constant in the $O_r(1)$ term depends on $r$.
    
    There is an orthogonal splitting $S_\bv|_{\calw_r} = S_1|_{\calw_r} \oplus S_2|_{\calw_r}$ for \eqref{s1}, where
    \begin{align}
        S_1|_{\calw_r} &= \oplus_{j=1}^2 \bket{\mu_\bv\otimes S(\ff)^*\otimes\bar{\pi}^*E_\varpi},\\
        S_2|_{\calw_r} &= \oplus_{j=1}^4 \ket{\mu_\bv\otimes \caly_\varpi}.
    \end{align}
    Next, suppose $u\in \Gamma_{\vartheta,s}(S_\bv|_\ww)$, then so are $\varphi_1 u$ and $\varphi_2 u$. There is a splitting
    \begin{equation}\label{splitvarphi1}
        \varphi_1 u = u_1 + u_2,
    \end{equation}
    where $u_b\in \Gamma_{\vartheta,s}(S_b|_{\calw_r})$ since $\chi(S_b|_{\calw_r}) \subseteq S_b|_{\calw_r}$ for $b=1,2$. Moreover, we have 
    \begin{align}
        &\|\varphi_1u\|^2 = \|u_1\|^2 + \|u_2\|^2\label{bu12},\\
        &\| \calb_{h_\delta,\psi,\beta^{-1}}(\varphi_1 u) \|^2 = \| \calb_{h_\delta,\psi,\beta^{-1}}\,u_1 \|^2 + \| \calb_{h_\delta,\psi,\beta^{-1}}\,u_2 \|^2\label{bu13}.
    \end{align}
    Note that we have $V^2u_2 = u_2$ by \eqref{ww} and $[D_\bv,V]u_2 = 0$ by \eqref{changefor}--\eqref{changefor2}. Then it follows from \eqref{spectral} that
    \begin{equation}\label{bu2}
        \| \calb_{h_\delta,\psi,\beta^{-1}}\,u_2 \|^2
        = 
        \int_{\calw_r} \bbac{|(D_\bv+\psi\vartheta)u_2|^2 
        + 
        \beta^{-2}|u_2|^2} \ge \beta^{-2}\|u_2\|^2.
    \end{equation}
    Set $\calr^\bv_n = \calr^\bv$. By \eqref{partterm} and \eqref{mudef}, for $a\in\{q,n\}$, we have on $\ws$ that
    \begin{align}\label{cur13}
        \begin{split}
            \calr^\bv_a|_{\mu_\bv\otimes\xi_\varpi}
            &= \frac{1}{2}\sum_{i,j=1}^{a}\bket{c^{\,\mu_\bv}(e_i)\otimes \tau^{\xi_\varpi}} \,\bket{c^{\,\mu_\bv}(e_j) \otimes \tau^{\xi_\varpi}} \,R^{\mu_\bv\otimes\xi_\varpi}(e_i,e_j)\\
            &=-\frac{1}{8}\sum_{i,j=1}^{a}\,\sum_{k,l=1}^{n} R^{T\ws,\bv}_{ijkl}\,c^{\,\mu_\bv}(e_i)\,c^{\,\mu_\bv}(e_j)\,c^{\,\mu_\bv}(e_k)\,c^{\,\mu_\bv}(e_l)\otimes\id\\
            &\qquad\qquad+ \frac{1}{2}\sum_{i,j=1}^{a} c^{\,\mu_\bv}(e_i)\,c^{\,\mu_\bv}(e_j) \otimes R^{\xi_\varpi}(e_i,e_j),
        \end{split}
    \end{align}   
    where $\{e_j\}_{j=1}^n$ is defined in \eqref{scale1}. Recall that the foliated manifold $(\calw,\calf)$ is almost isometric with respect to the metric $g^{T\calw}_\bv$ defined in \eqref{osm31} and
    \begin{equation}\label{cur43}
        R^{S(\ff)^*}(e_i,e_j)
            = \frac{1}{4}\sum_{k,l=m+1}^{n} R^{\ff}_{ijkl}\,\widehat{c}(e_k)\,\widehat{c}(e_l).
    \end{equation}
    Recall the number $\varpi$ in \eqref{ind3}. By \eqref{cur31}, \eqref{xibv} and \eqref{cur43}, we have on $\ws$ that
    \begin{align}\label{cur44}
        \begin{split}
            R^{\xi_\varpi}(e_i,e_j)|_{S(\ff)^*\otimes\bar{\pi}^*E_\varpi} &= R^{S(\ff)^*}(e_i,e_j) \otimes \id + \id \otimes R^{\bar{\pi}^*E_\varpi}(e_i,e_j)\\
            &=
            \begin{cases}
                O_r(\varepsilon^2) + O\bket{\frac{\varpi}{\beta^2}}, & \text{if } 1\le i,j \le q,\\
               O_{r,\varpi}\bket{\frac{1}{\beta}}, & \text{otherwise}.
            \end{cases}
        \end{split}
    \end{align}
    By \eqref{cur13}, \eqref{cur44} and  \cref{partcur1}, we have 
    \begin{equation}\label{cur12}
        \calr^\bv_a u_1 = \bbbac{\frac{k^\calf}{4\beta^2} + O_{r,\varpi}\bbbket{\frac{1}{\beta} + \frac{\varepsilon}{\beta^2}} + O\bbbket{\frac{\varpi}{\beta^2}}}u_1.
    \end{equation} 

    Note that $\supp (u_1) \subset \calw_{\frac{3r}{4}}$. By \eqref{bd1}, \eqref{zero1}, \eqref{cur12} and \cref{part3}, we obtain
    \begin{align}\label{est9}
        \begin{split}
            \int_\ws |D_\bv u_1|^2 
            &\ge
            \int_\ws \bbbinn{u_1, \bbbket{\calr^\bv + \frac{1+O_r(\beta)}{q-1}\calr^\bv_{q} + O_r\bbbket{\frac{1}{\beta}}\!} u_1}\\
            &\qquad\qquad\qquad+ \int_{\partial\ww \cap \ws} \bbbac{\frac{q H^{\calf}}{2\beta} + O_r(1)}|u_1|^2
            \end{split}\\
            \begin{split}
            &=
            \int_\ws \bbbac{\frac{q}{q-1}\cdot\frac{k^\calf}{4\beta^2} + O_{r,\varpi}\bbbket{\frac{1}{\beta} + \frac{\varepsilon}{\beta^2}} + O\bbbket{\frac{\varpi}{\beta^2}}\!}|u_1|^2\\
            &\qquad\qquad\qquad+ \int_{\partial\ww \cap \ws} \bbbac{\frac{qH^{\calf}}{2\beta} + O_r(1)}|u_1|^2.
        \end{split}
    \end{align}
    Recall that $h_\delta\equiv 1$ on $\supp (u_1)$ and $k^\calf = \bar{\pi}^*k^F \ge \alpha^2 q(q-1)$ on $\calw_r$ by \eqref{scalmean}.
    Combining \eqref{spectral2} with \eqref{psi1}, \eqref{asymp1} and \eqref{est9}, we have (cf.\ \cite[(2.22)]{zhang17})
    \begin{align}\label{spe61}
        \begin{split}
            \|\calb_{h_\delta,\psi,\beta^{-1}}\,u_1\|^2
            &\ge 
            \int_{\calw_r}\!|D_\bv u_1|^2 + \!\int_{\calw_r}\!\!\bbbac{\psi^2  -|\di \psi|_\bv + \frac{V^2}{\beta^2} - \frac{|[D_\bv,V]|}{\beta}}|u_1|^2\\
            &\qquad\qquad+ \int_{\partial {\ww}\cap \calw_r} s\psi |u_1|^2 \\
            &\ge
            \bbbac{\frac{\alpha^2q^2 (1-\eta_0^2)}{4\beta^2} + O_{r,\varpi}\bbbket{\frac{1}{\beta} + \frac{\varepsilon}{\beta^2}}\! + O\bbbket{\frac{\varpi}{\beta^2}+ \frac{1}{\beta^2 r}}\!} \|u_1\|^2\\
            &\qquad\qquad+ 
            \int_{\partial \ww\cap \calw_r}\bbbac{\frac{q H^\calf}{2\beta} + s\psi + O_r(1)}|u_1|^2.
        \end{split}
     \end{align}

    Finally, note that $[D_\bv,V]\in \Gamma(\en{S_\bv})$ is bounded on the compact manifold $\ww$. Combining \eqref{spectral} with \eqref{vbv2}, \eqref{asymp1} and $h_\delta \equiv \delta$, we have
    \begin{align}\label{spe62}
        \begin{split}
        \|\calb_{h_{\delta},\psi,\beta^{-1}}(\varphi_2 u)\|^2
        &\ge
        \int_{\ww} \bbbac{\frac{V^2}{\beta^2} - \frac{h_\delta^2}{\beta}|[D_\bv,V]|} |\varphi_2 u|^2\\
        &\ge 
        \bbbac{\frac{1}{\beta^2} + O\bbbket{\frac{1}{\beta^2 r}} + O_r\bbbket{\frac{1}{\beta}}\!}\int_{\calw_r}|\varphi_2 u|^2 \\
        &\qquad\qquad\quad\;\; +  \bbbac{\frac{1}{\beta^2} + \delta^2\, O_{\bv,\varpi,r}\ket{1}}
        \int_{\ww\setminus \calw_r}|\varphi_2 u|^2;
        \end{split}
    \end{align}
    cf.\ \cite[(2.23)~and~(2.39)]{zhang17}.

    Recall that $s \equiv \pm 1$ on $\pmww$ and $\alpha>0, \eta_0<1$. Combining \eqref{est7} with \eqref{psi2}, \eqref{phisq}, \eqref{asymp1}, \eqref{bu12}--\eqref{bu2} and \eqref{spe61}--\eqref{spe62}, we conclude that there exist sufficiently large $r>0$, sufficiently small $\varpi>0$, sufficiently small $ 0<\beta = \varepsilon <\min\{\beta_0,a_r^{-1}\}$, sufficiently small $\delta>0$ and some $C_0>0$ such that
    \begin{equation}\label{posi}
        \|\calb_{h_\delta,\psi,\beta^{-1}} \,u\|^2 \ge C_0\|u\|^2
    \end{equation}
    for all $u\in \Gamma_{\vartheta,s}(S_\bv)$. Since $\Gamma_{\vartheta,s}(S_\bv)$ is dense in $\rmh^1_{\vartheta,s}(\ww,S_\bv)$, the formula \eqref{posi} also holds for all $u\in \rmh^1_{\vartheta,s}(\ww,S_\bv)$. For the chosen $\delta,\bv, \varpi, r>0$, it follows that
    \begin{equation}\label{ind40}
        \ind \calb_{h_\delta,\psi,\beta^{-1},s} = 0,
    \end{equation}
    which contradicts \eqref{ind11}. We conclude $\wid_F(W)\le t_+- t_-$. 
    
    The formula \eqref{diswid} is a direct consequence of \eqref{width}. 
\end{proof}

\cref{widmain2} follows directly from \cref{widmain}:

\begin{proof}[Proof of \cref{widmain2}]
    (1) Since $W$ is compact and $H^F$ is smooth, there exist $t_\pm$ such that $-\frac{\pi}{\alpha q} < t_- < t_+ < \frac{\pi}{\alpha q}$ and \eqref{scalmean} holds. By \cref{widmain}, we have
    \begin{equation}\label{eq1}
        \wid_F(W) \le t_+-t_- < \frac{2\pi}{\alpha q}.
    \end{equation}
    
    (2) Extend $g^F$ to a metric $g$ on $W$. There exists $l \in (0, \frac{\pi}{\alpha q})$ such that
    \begin{equation}\label{assump1}
        \wid_F(W) \ge \wid_g(W) > l;
    \end{equation}
    see \eqref{noless}. Suppose to the contrary that 
    \begin{equation}\label{contr2}
        \inf_{\partial_- W} H^F + \inf_{\partial_+ W} H^F\ge 0.
    \end{equation}
    There exists $t_-\in (-\frac{\pi}{\alpha q},\frac{\pi}{\alpha q})$ such that $\inf_{\partial_- W} H^F = \alpha\tan\ket{\frac{\alpha q t_-}{2}}$. By \eqref{contr2}, we have 
    \begin{equation}\label{eq2}
        \inf_{\partial_+ W} H^F\ge -\alpha\tan\!\bbket{\frac{\alpha q t_-}{2}} > -\alpha\tan\!\bbbket{\frac{\alpha q (t_-+l)}{2}}.
    \end{equation}
    Then it follows from \cref{widmain} that $\wid_F(W)\le l$, which contradicts \eqref{assump1}.
\end{proof}

\subsection{\texorpdfstring{$K$}{K}-Cowaist and \texorpdfstring{$\widehat{A}$}{A hat}-Cowaist of Foliations}\label{4.3}
We now extend \cite[Ex.~7.4--7.5]{cz24} to foliations and provide examples of foliated bands with infinite vertical $\widehat{A}$-cowaist. Recall the norm $\bla{R^E_{F_0}}_\infty$ is defined in \eqref{normcur}.

\begin{defn}[{\cite[p.~195]{g96}\,\cite[Def.~1.1]{sw25}}]
    Let $(N,F_0)$ be a closed, oriented, foliated manifold equipped with a metric $g^{F_0}$ on $F_0$.
\begin{enumerate}[label={\rm{(\arabic*)}}]
    \item If $\dim N$ is even, the \emph{$K$-cowaist} of $(N,F_0)$ is defined as
    \begin{equation}\label{kcw2}
        K\text{-}\mathrm{cw}_2(N,F_0) = \bbket{\inf_{(E,\nabla^E)}\bbac{\bla{R^E_{F_0}}_\infty}}^{-1}\in (0,\infty],
    \end{equation}
    where the infimum is taken over all pairs $(E, \nabla^E)$ consisting of a Hermitian vector bundle $E$ over $N$ with a nontrivial Chern character number\footnote{That is, there exist positive integers $j_1,\dots,j_a$ such that $\binn{\ch_{j_1}(E)\cdots \ch_{j_a}(E),[N]}\ne 0$, where $\ch_{j_i}(E)$ $(i=1,\dots,a)$ denotes the $j_i$-th Chern character form of $E$. Since Chern character numbers and Chern numbers can be expressed as linear combinations of each other, we can instead require that $E$ has a nontrivial Chern number.} and a Hermitian connection $\nabla^E$ on $E$.    
    \item If $\dim N$ is even, the \emph{$\widehat{A}$-cowaist} of $(N,F_0)$ is defined as
    \begin{equation}\label{acw2}
        \widehat{A}\text{-}\mathrm{cw}_2(N,F_0) = \bbket{\inf_{(E,\nabla^E)}\bbac{\bla{R^E_{F_0}}_\infty}}^{-1}\in (0,\infty],
    \end{equation}
    where the infimum is taken over all pairs $(E, \nabla^E)$ consisting of a Hermitian vector bundle $E$ over $N$ satisfying $\binn{\widehat{A}(TN)\ch(E),[N]}\ne 0$ and a Hermitian connection $\nabla^E$ on $E$.
    \item If $\dim N$ is odd, let $\pro_N\colon N\times\bbs^1 \to N$ be the natural projection. We define
    \begin{align}
        K\text{-}\mathrm{cw}_2(N,F_0) &= K\text{-}\mathrm{cw}_2\bket{N\times\bbs^1,\pro_{N}^*F_0},\label{kcw2_odd}\\
        \widehat{A}\text{-}\mathrm{cw}_2(N,F_0) &= \widehat{A}\text{-}\mathrm{cw}_2\bket{N\times\bbs^1,\pro_N^*F_0}.\label{acw2_odd}
    \end{align}
\end{enumerate}
\end{defn}

When $F_0 = TN$, \eqref{kcw2} is the $K$-area defined in \cite[\S4]{g96} and \eqref{acw2} is the $\widehat{A}$-area defined in \cite[Def.~1.6]{cz24}. For discussions of the relation between $K$-cowaist, $\widehat{A}$-cowaist and positive scalar curvature, see \cite{g96,g17,bh23,g23,w23,bh24,cz24,shi25a}, etc. For foliated versions, refer to \cite[\S9$\frac{2}{3}$]{g96} and \cite{sw25}.

The notion of a closed, oriented, foliated manifold $(N,F_0)$ having \emph{infinite} $K$-cowaist or \emph{infinite} $\widehat{A}$-cowaist is independent of the choice of $g^{F_0}$. If $g^{F_0}$ is the restriction of a Riemannian metric $g^{TN}$ on $N$ to $F_0$, then $\bla{R^E_{F_0}}_\infty \le \bla{R^E_{TN}}_\infty$. Thus, 
\begin{equation}
    K\text{-cw}_2(N,F_0) \ge K\text{-cw}_2(N,TN)
    \quad\text{and}\quad
    \widehat{A}\text{-cw}_2(N,F_0) \ge \widehat{A}\text{-cw}_2(N,TN).
\end{equation}

\begin{exam}\label{ex0}
    Let $(N,F_0)$ be a closed, oriented, foliated manifold. If $(N,F_0)$ has infinite $\widehat{A}$-cowaist, then the foliated band $(N\times [-1,1],F)$ constructed in \eqref{product} has infinite vertical $\widehat{A}$-cowaist in the sense of \cref{ahat}. 

    A repetition of the argument in \cite[\S5$\frac{3}{8}$]{g96} (cf.\ \cite[Thm.~1.1]{w23}, \cite[Prop.~1.3]{sw25}) shows that a closed, oriented, foliated manifold with infinite $K$-cowaist also has infinite $\widehat{A}$-cowaist. Moreover, if $TN$ is spin and $(N,F_0)$ has infinite $\widehat{A}$-cowaist, then\footnote{\label{subtle}There is a subtle point when $TN$ is spin, $\dim N$ and $\rk F_0$ are odd. In this case, we have $\widehat{A}\text{-cw}_2(N\times\bbs^1,\pro_N^*F_0) \ge \widehat{A}\text{-cw}_2(N\times\bbs^1,\pro_N^*F_0\oplus T\bbs^1)$. Without considering coverings, our requirement $\widehat{A}\text{-cw}_2(N\times\bbs^1,\pro_N^*F_0) = \infty$ is slightly weaker than the requirement $\widehat{A}\text{-cw}_2(N\times\bbs^1,\pro_N^*F_0\oplus T\bbs^1) = \infty$ in \cite[Thm.~1.2]{sw25}. However, the result of \cite[Thm.~1.2]{sw25} still holds with the help of \cref{aliso2} (compare the proof of \cref{widmain1}).} $F_0$ admits no metric of positive leafwise scalar curvature by \cite[Thm.~1.2]{sw25}. 

    If $(N,F_0)$ has infinite $\widehat{A}$-cowaist and $M$ is an oriented, closed manifold satisfying $\widehat{A}(M)\ne 0$, then the foliated manifold $(N\times M,F_0\oplus TM)$ has infinite $\widehat{A}$-cowaist.
\end{exam}

\begin{exam}\label{ex1}
    Let $(N,F_0)$ be an oriented, closed, foliated manifold. Suppose $\dim N = n$ and $F_0$ is equipped with a metric $g^{F_0}$. Fix a metric $g^{T\bbs^n}$ on $\bbs^n$. We say the foliated manifold $(N,F_0)$ is \emph{compactly $\Lambda^2$-enlargeable} if for every $\epsilon>0$, there exists a \emph{finite} covering $\widehat{\pi}\colon\widehat{N}_\epsilon\to N$ and a smooth map $\Phi\colon \widehat{N}_\epsilon\to\bbs^n$ of nonzero degree, which is $(\epsilon,\Lambda^2)$-contracting along $\widehat{\pi}^*F_0 \subseteq T\widehat{N}_\epsilon$; that is, for any $X,Y\in (\widehat{\pi}^*F_0)_x$ and $x\in \widehat{N}_\epsilon$,
    \begin{equation}
        |\di\Phi (X)\wedge\di\Phi (Y)|\le \epsilon|X\wedge Y|,
    \end{equation}
    where the norms are induced by $g^{T\bbs^n}$ and the pullback metric $\widehat{\pi}^*g^{F_0}$ respectively. 
    
    Since $N$ and $\bbs^n$ are compact, this definition is independent of the choice of $g^{F_0}$ and $g^{T\bbs^n}$. For more discussions on enlargeability, see \cite{gl80a,gl83}. For foliated versions, see \cite{zhang20,sz22,swz22}.

    The \emph{$\widehat{A}$-degree} \cite[Def.~2.6]{gl80a} of a smooth map $\Psi$ between two connected, oriented, compact manifolds is defined as the number $\widehat{A}(\Psi^{-1}(x))$ for any regular value $x$ of $\Psi$ in the interior of the target (with the convention $\widehat{A}(\varnothing) = 0$). Suppose $(N,F_0)$ is a closed, oriented, foliated manifold that is compactly $\Lambda^2$-enlargeable, and let $\pro\colon N\times[-1,1]\to N$ be the natural projection. Assume $(W,F)$ is a compact, oriented, foliated band that admits a smooth map $\Psi\colon W\to N\times [-1,1]$ of nonzero $\widehat{A}$-degree satisfying
    \begin{equation}
    \Psi(\partial_\pm W) \subseteq N\times\{\pm 1\}
    \quad\text{and}\quad
    \di \Psi(F)\subseteq \pro^*F_0\oplus T[-1,1].
    \end{equation}
    Then $(W,F)$ has infinite vertical $\widehat{A}$-cowaist by the argument in \cite[Prop.~6.8]{cz24}.
\end{exam}

\section{Sub-Dirac Operators and Spin Foliations}\label{5}

Liu and Zhang \cite[\S2b]{lz} introduce the sub-Dirac operators associated with spin subbundles of even rank. These operators are Dirac operators on certain Clifford modules and play a crucial role in \cite{zhang17}. We study sub-Dirac operators associated with spin subbundles of odd rank in \cref{5.1}, and then we establish \cref{widmain} for spin foliations in \cref{5.2}.

\subsection{Sub-Dirac Operators Associated with Spin Subbundles}\label{5.1}

Let $(M,g^{TM})$ be a Riemannian manifold with the orthogonal splitting
\begin{equation}\label{osm-1}
    TM = F_0\oplus F_1 \oplus F_2,
    \quad
    g^{TM} = g^{F_0} \oplus g^{F_1}\oplus g^{F_2}.
\end{equation}
Suppose $q= \rk F_0$, $m = \rk (F_0 \oplus F_1)$ and $n = \dim M$. Assume that $F_0,F_2$ are spin and $F_1$ is orientable. Assume that $\rk F_0$ is odd and $\rk F_1, \rk F_2$ are even. Note that $F_0,F_1,F_2$ are not necessarily integrable in this subsection.

For the Levi-Civita connection $\nabla^{TM}$ of $g^{TM}$ and the orthogonal projections $p_b\colon TM\to F_b$ $(b=0,1,2)$, we define the Euclidean connection $\nabla^{F_b}$ on $F_b$ by
\begin{equation}\label{con2}
    \nabla^{F_b} = p_b\nabla^{TM}p_b.
\end{equation}

For $b \in\{0,2\}$, let $S(F_b)$ be a Hermitian bundle of spinors associated with $(F_b,g^{F_b})$, which is equipped with the Clifford action $c\colon F_b\to \en{S(F_b)}$ and the Hermitian connection $\nabla^{S(F_b)}$ induced by the connection $\nabla^{F_b}$ defined in \eqref{con2}. Let $\Lambda^*(F_1)$ be the complexified exterior algebra bundle of the dual bundle $F_1^*$ equipped with the induced Hermitian metric and the Hermitian connection $\nabla^{\Lambda^*(F_1)}$ induced by $\nabla^{F_1}$. For any $X\in F_1$, we define $c,\widehat{c} \colon F_1 \to \en{\Lambda^*(F_1)}$ by
    \begin{equation}\label{clif1}
        c(X)= (X^*\wedge) - \,i_{X}, \quad \widehat{c}(X)= (X^*\wedge) + \,i_{X}, 
    \end{equation}
    where $X^*\wedge$ and $i_{X}$ denote the exterior multiplication by $X^*$ and the interior multiplication by $X$ on $\Lambda^*(F_1)$, respectively.

We fix orientations on $F_0, F_1$ and $F_2$. Let $\{e_j\}_{j=1}^n$ be a positively oriented orthonormal basis of $TM$ such that $\{e_j\}_{j=1}^q,\{e_j\}_{j=q+1}^m,\{e_j\}_{j=m+1}^n$ span $F_0,F_1,F_2$ respectively. Since $\rk F_1$ and $\rk F_2$ are even, there are $\bbz_2$-gradings $\Lambda^*(F_1) = \Lambda^*_+(F_1)\oplus \Lambda^*_-(F_1)$ and $S(F_2) = S_+(F_2) \oplus S_-(F_2)$ defined by
\begin{align}\label{vol1}
    \tau_1 &= \bket{\sqrt{-1}\,}^{\frac{m-q}{2}}\,c(e_{q+1})\, \cdots \,c(e_{m}),\\
    \tau_2 &= \bket{\sqrt{-1}\,}^{\frac{n-m}{2}}\,c(e_{m+1})\, \cdots \,c(e_{n}),
\end{align}
which anticommute with the Clifford actions $c(\cdot)$. Since $\rk F_0$ is odd, there are two choices for $S(F_0)$. As in \eqref{vol}, we choose the one that satisfies
\begin{equation}\label{choice1}
    \bket{\sqrt{-1}\,}^{\frac{q+1}{2}}\,c(e_{1})\, \cdots \,c(e_{q}) = -\id
\end{equation}
and write $S^1(F_0)$ for this $S(F_0)$ for clarity.

Following \cite[(2.11)]{lz} (cf.\ \cite[(1.55)]{zhang17}), define the Hermitian vector bundle
\begin{equation}\label{mu0}
    \mu=S^1(F_0) \otimes \Lambda^*(F_1)\otimes S(F_2).
\end{equation}
For  $X_b\in F_b$ $(b=0, 1, 2)$, we define the Clifford action $c^{\,\mu} \colon TM\to \en{\mu}$ by
\begin{align}
        c^{\,\mu}(X_0) &= c(X_0) \otimes \tau_1 \otimes \tau_2,\label{b1}\\
        c^{\,\mu}(X_1) &= \id \otimes \,c(X_1) \otimes \tau_2,\\
        c^{\,\mu}(X_2) &= \id \otimes \id \otimes \,c(X_2),\label{b3}
\end{align}
which satisfies \eqref{a1}. Let $\overline{\nabla}^{\mu}$ be the induced tensor product Hermitian connection on $\mu$. Let $A$ be the $\en{TM}$-valued $1$-form on $M$ defined by
\begin{equation}
    A = \nabla^{TM} - \nabla^{F_0} - \nabla^{F_1} - \nabla^{F_2}.
\end{equation}
We define a Hermitian connection on $\mu$ by (see \cite[(2.17)]{lz}, \cite[(1.60)]{zhang17})
\begin{equation}\label{modcon}
    \nabla^{\mu}_X = \overline{\nabla}^{\mu}_X + \frac{1}{4}\sum_{i,j=1}^n\inn{A(X)e_i,e_j}\,c^{\,\mu}(e_i)\,c^{\,\mu}(e_j)
\end{equation}
for any $X\in TM$. Following \cite{lz}, we define the \emph{sub-Dirac operator}
\begin{equation}\label{subdir}
    D^\mu = \sum_{j=1}^{n}c^{\,\mu}(e_j) \nabla^\mu_{e_j}.
\end{equation}

\begin{rmk}[cf.\ {\cite[Thm.~2.3]{lz}, \cite[Rem.~1.8]{zhang17}}]\label{rem3}
    If $F_1$ is spin in addition, then $TM = F_0 \oplus F_1 \oplus F_2$ is also spin. We have the following identifications of $\bbz_2$-graded vector bundles (see \cite[Prop.~10.16]{lm89}, \cite[(1.63)]{zhang17})
    \begin{equation}\label{iso}
        \Lambda^*_+(F_1) \oplus \Lambda^*_-(F_1) = \bket{S_+(F_1) \otimes S(F_1)^*} \oplus \bket{S_-(F_1) \otimes S(F_1)^*}.
    \end{equation}
    Then \eqref{mu0} can be identified with the twisted Clifford module
    \begin{equation}
        \mu' \coloneqq S^1(TM) \otimes S(F_1)^*.
    \end{equation}
    The connection $\nabla^\mu$ is identified with the tensor product connection on $\mu'$, and the Clifford action $c^{\,\mu}$ is identified with the Clifford action on $\mu'$ given by $c\otimes \id$. Hence, the sub-Dirac operator $D^\mu$ is identified with the twisted Dirac operator on $\mu'$. 
\end{rmk}

We now return to the general case. Although $F_1$ might not be spin, it is spin locally after choosing a local trivialization. Since the local formula \eqref{a3} holds for ${\mu'}$, it also holds for $\mu$. Hence, $(\mu,\nabla^\mu,c^{\,\mu})$ is also a Clifford module on $(M,g^{TM})$. Moreover, the curvature $R^{\mu} = (\nabla^{\mu})^2$ satisfies (cf.\ \cite[(1.67)]{zhang17})
\begin{equation}\label{cur1}
    R^\mu(e_i,e_j) = -\frac{1}{4} \sum_{k,l=1}^{n}R^{TM}_{ijkl}\,c^{\,\mu}(e_k) \,c^{\,\mu}(e_l) + \frac{1}{4} \sum_{k,l=q+1}^{m}R^{F_1}_{ijkl}\,\widehat{c}^{\,\mu}(e_k)\,\widehat{c}^{\,\mu}(e_l),
\end{equation}
where $\widehat{c}^{\,\mu}(e_k) \coloneqq \id \otimes \,\widehat{c}(e_k)\otimes \id$ $(q+1 \le k\le m)$.

In what follows, suppose $\partial M \neq \varnothing$. We further assume $F_0$ is transverse to $\partial M$ and  $\ket{F_1 \oplus F_2}\big|_{\partial M} \subseteq T\partial M$. Let $\nu$ be the inward unit normal on $\partial M$. It follows that $\nu \in \Gamma(F_0|_{\partial M})$ and $T\partial M = (F_0\cap T\partial M)\oplus F_1|_{\partial M} \oplus F_2|_{\partial M}$, where $ F_0\cap T\partial M$ is a subbundle of even rank. As in \eqref{oddevenclif}, there is an identification
\begin{equation}\label{identi}
    S^1(F_0)|_{\partial M} = S(F_0\cap T\partial M).
\end{equation}
The Clifford action on $S^1(F_0)|_{\partial M}$ is given by $c^\partial(X) \coloneqq c(X)\,c(\nu)$ for $X\in F_0\cap T\partial M$; cf.\ \eqref{bdclif}. Let $\tau^\partial$ be the canonical $\bbz_2$-grading on $S(F_0\cap T\partial M)$. By \eqref{choice1}, we have 
\begin{equation}\label{identi2}
    c(\nu) / \sqrt{-1} = \bket{\sqrt{-1}\,}^{\frac{q-1}{2}} c^\partial (e_2)\, \cdots \,c^\partial(e_q) = \tau^\partial
\end{equation}
for any positively oriented orthonormal basis $\{\nu,e_2,\dots,e_q\}$ of $F_0$ such that $\{e_j\}_{j=2}^q$ spans $F_0\cap T\partial M$.

The Clifford module $\mu_\partial = \mu|_{\partial M}$ defined by \eqref{bdclif} has Clifford action $c^{\,\mu_\partial}(X) = c^{\,\mu}(X)\,c^{\,\mu}(\nu)$ for $X\in T\partial M$. On the other hand, the Clifford module
\begin{equation}
    \widetilde{\mu} = S(F_0\cap T\partial M) \,\widehat{\otimes}\, \Lambda^*(F_1) \,\widehat{\otimes}\, S(F_2)
\end{equation} 
has the following $\bbz_2$-grading by \eqref{b1} and \eqref{identi2}:
\begin{equation}\label{grading}
    \tau^\partial \otimes \tau_1 \otimes \tau_2 = c^{\,\mu}(\nu)/\sqrt{-1}.
\end{equation} 
For $X_0\in F_0\cap T\partial M$ and $X_b\in F_b$ $(b=1,2)$, the Clifford action on $\widetilde{\mu}$ is given by
\begin{align}
    c^{\,\widetilde{\mu}}(X_0) &= c^\partial(X_0) \otimes \id \otimes \id,\label{b4}\\
    c^{\,\widetilde{\mu}}(X_1) &= \tau^\partial \otimes c(X_1) \otimes \id,\\
    c^{\,\widetilde{\mu}}(X_2) &= \tau^\partial \otimes \tau_1 \otimes c(X_2).\label{b6}
\end{align}

The following lemma identifies the Clifford actions on $\mu_\partial = \mu|_{\partial M}$ and $\widetilde{\mu}$.
\begin{lem}\label{isomod}
    We define $A_1\in\Gamma(\en{\Lambda^*(F_1)})$ and $A_2\in \Gamma(\en{S(F_2)})$ by 
    \begin{equation}
        A_b = \frac{1+\tau_b}{2} - \sqrt{-1}\cdot\frac{1-\tau_b}{2}\qquad (b =1,2).
    \end{equation}
    Under the identification \eqref{identi}, define $\Phi = \id \otimes\, A_1 \otimes A_2\colon \mu_\partial \to \widetilde{\mu}$. Then $\Phi$ is an isomorphism of Clifford modules on $\partial M$; that is, for all $X\in T\partial M$,
    \begin{equation}\label{iso1}
        \Phi\circ c^{\,\mu_\partial}(X) = c^{\,\widetilde{\mu}}(X) \circ \Phi.
    \end{equation}
\end{lem}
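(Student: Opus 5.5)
The plan is to verify \eqref{iso1} by direct computation, splitting $X\in T\partial M$ according to $T\partial M=(F_0\cap T\partial M)\oplus F_1|_{\partial M}\oplus F_2|_{\partial M}$. Each side of \eqref{iso1} is linear in $X$, so it suffices to treat $X=X_0\in F_0\cap T\partial M$, $X=X_1\in F_1$ and $X=X_2\in F_2$ separately.

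First, $\Phi$ is an isomorphism. Each $A_b$ acts as $\id$ on the $(+1)$-eigenbundle of $\tau_b$ and as $-\sqrt{-1}$ on the $(-1)$-eigenbundle, so $A_b$ is unitary and invertible. The whole computation rests on one algebraic identity. For $X\in F_b$ ($b=1,2$), the endomorphism $c(X)$ anticommutes with $\tau_b$ and hence swaps its eigenbundles. I claim that
\begin{equation*}
    c(X)\,A_b = \sqrt{-1}\,A_b\,c(X)\,\tau_b .
\end{equation*}
To check this, take $v$ with $\tau_b v=\pm v$ and compare both sides on $v$. If $\tau_b v=v$, both sides equal $c(X)v$. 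If $\tau_b v=-v$, both sides equal $-\sqrt{-1}\,c(X)v$. I also use two facts: $A_b$ commutes with $\tau_b$, and $c(\nu)=\sqrt{-1}\,\tau^\partial$ on $S^1(F_0)|_{\partial M}$ by \eqref{identi2}.

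Next I compute $c^{\,\mu_\partial}(X)=c^{\,\mu}(X)\,c^{\,\mu}(\nu)$ using \eqref{b1}--\eqref{b3}, where $c^{\,\mu}(\nu)=c(\nu)\otimes\tau_1\otimes\tau_2$. Throughout I use $\tau_b^2=\id$.

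\textbf{Case $X_0\in F_0\cap T\partial M$.} Here
\begin{equation*}
    c^{\,\mu_\partial}(X_0)=c(X_0)c(\nu)\otimes\id\otimes\id=c^\partial(X_0)\otimes\id\otimes\id,
\end{equation*}
which is $c^{\,\widetilde{\mu}}(X_0)$ by \eqref{b4}. Since $\Phi$ is the identity on the first tensor factor, it commutes with this operator.

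\textbf{Case $X_1\in F_1$.} Here
\begin{equation*}
    c^{\,\mu_\partial}(X_1)=c(\nu)\otimes c(X_1)\tau_1\otimes\tau_2^2=\sqrt{-1}\,\tau^\partial\otimes c(X_1)\tau_1\otimes\id .
\end{equation*}
Therefore
\begin{equation*}
    \Phi\circ c^{\,\mu_\partial}(X_1)=\tau^\partial\otimes \sqrt{-1}\,A_1c(X_1)\tau_1\otimes A_2=\tau^\partial\otimes c(X_1)A_1\otimes A_2,
\end{equation*}
using the key identity in the middle factor. The right-hand side is exactly $c^{\,\widetilde{\mu}}(X_1)\circ\Phi$.

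\textbf{Case $X_2\in F_2$.} Here
\begin{equation*}
    c^{\,\mu_\partial}(X_2)=\sqrt{-1}\,\tau^\partial\otimes\tau_1\otimes c(X_2)\tau_2 .
\end{equation*}
Applying $\Phi$, I use $A_1\tau_1=\tau_1A_1$ in the middle factor and the key identity for $b=2$ in the last factor. This gives $\tau^\partial\otimes\tau_1A_1\otimes c(X_2)A_2$, which is $c^{\,\widetilde{\mu}}(X_2)\circ\Phi$ by \eqref{b6}.

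The only real obstacle is bookkeeping. The tensor products in $\mu$ are ordinary rather than graded, so no Koszul signs arise when factors are reordered. The one genuine input is the convention \eqref{choice1}: it fixes the factor $\sqrt{-1}$ in $c(\nu)=\sqrt{-1}\,\tau^\partial$, and the phases $1$ and $-\sqrt{-1}$ in $A_b$ are chosen precisely to absorb that factor.
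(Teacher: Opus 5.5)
Your proof is correct and follows essentially the same route as the paper. You use the same splitting of $X$ into its $F_0\cap T\partial M$, $F_1$ and $F_2$ components, the same key identity $c(X_b)A_b=\sqrt{-1}\,A_b\,c(X_b)\,\tau_b$ (the paper's \eqref{acommute}), and the same inputs $c(\nu)=\sqrt{-1}\,\tau^\partial$ and $A_1\tau_1=\tau_1A_1$. The only difference is cosmetic: you check the key identity on $\tau_b$-eigenvectors, whereas the paper expands $A_b$ algebraically and writes down $A_b^{-1}$ explicitly.
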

\begin{proof}
    Observe that $A_b$ is an isomorphism with inverse
    \begin{equation}
        A_b^{-1} = \frac{1+\tau_b}{2} + \sqrt{-1}\cdot\frac{1-\tau_b}{2}.
    \end{equation}
    So $\Phi$ is an isomorphism. It suffices to check \eqref{iso1} for $X_0\in F_0\cap T\partial M$ and $X_b\in F_b|_{\partial M}$ $(b=1,2)$. In view of \eqref{identi}, we have\begin{equation}
        \Phi\circ c^{\,\mu_\partial}(X_0) = c^{\,\widetilde{\mu}}(X_0) \circ \Phi.
    \end{equation}
    For $b = 1,2$, since $\tau_b$ anticommutes with $c(X_b)$, we have
    \begin{align}\label{acommute}
        \begin{split}
            \sqrt{-1} \, A_b \, c(X_b) \, \tau_b &= \sqrt{-1} \, \bbbket{\frac{1+\tau_b}{2} - \sqrt{-1}\cdot\frac{1-\tau_b}{2}} \, c(X_b) \, \tau_b\\
            & = c(X_b) \, \bbbket{\sqrt{-1}\cdot \frac{1-\tau_b}{2} + \frac{1+\tau_b}{2}} \, \tau_b = c(X_b) \, A_b.
        \end{split}
    \end{align}
    By \eqref{identi2}, \eqref{acommute} and $A_1 \, \tau_1 = \tau_1 \, A_1$, we have
    \begin{align}
        \begin{split}
            \Phi\circ c^{\,\mu_\partial}(X_1) 
            &= 
            (\id\otimes \,A_1 \otimes A_2) \, 
            \ket{\id \otimes \,c(X_1) \otimes \tau_2} \, 
            \ket{c(\nu) \otimes \tau_1 \otimes \tau_2}\\
            &= \sqrt{-1}\,\tau^\partial \otimes A_1\,c(X_1)\,\tau_1 \otimes A_2\\
            &= \tau^\partial \otimes c(X_1)\,A_1 \otimes A_2 = c^{\,\widetilde{\mu}}(X_1) \circ \Phi,
        \end{split}
    \end{align}
    \begin{align}
        \begin{split}
            \Phi\circ c^{\,\mu_\partial}(X_2) &= 
            (\id\otimes \,A_1 \otimes A_2) \, 
            \ket{\id \otimes \id \otimes \,c(X_2)} \, 
            \ket{c(\nu) \otimes \tau_1 \otimes \tau_2}\\
            &= \sqrt{-1}\,\tau^\partial \otimes  A_1 \, \tau_1 \otimes A_2\,c(X_2)\,\tau_2\\
            &= \tau^\partial \otimes \tau_1 \, A_1 \otimes c(X_2) \, A_2 = c^{\,\widetilde{\mu}}(X_2) \circ \Phi.
        \end{split}
    \end{align}
    We conclude the proof.
\end{proof}

\subsection{Leafwise Band Width Estimate for Spin Foliations}\label{5.2}
When $TW$ is nonspin but $F$ is spin, \cref{widmain} still holds for the following modified definition of infinite vertical $\widehat{A}$-cowaist.

\begin{defn}\label{ahat1}
    Let $(W,F)$ be a compact, oriented, foliated band with $m\coloneqq \dim W$ and $q\coloneqq \rk F$. We define the auxiliary odd-dimensional band $W^\lozenge$ by
    \begin{equation}\label{band0}
    W^\lozenge = 
    \begin{cases}
        W, &\text{if } q \text{ is odd and } m-q \text{ is even},\\
        W\times \bbs^1 \times \bbs^1, &\text{if } q \text{ is even and } m-q \text{ is odd},\\
        W\times \bbs^1, &\text{otherwise}.
    \end{cases}
    \end{equation}
    We say $(W,F)$ has infinite \emph{modified} vertical $\widehat{A}$-cowaist if $\ket{W^\lozenge,\pro_W^*F}$ has infinite vertical $\widehat{A}$-cowaist in the sense of \cref{ahat}\,(1), where $\pro_W\colon W^\lozenge\to W$.
\end{defn}
\begin{thm}\label{widmain1}
    \cref{widmain,widmain2} still hold when $W$ is nonspin but $W$ is oriented, $F$ is spin and $(W,F)$ has infinite modified vertical $\widehat{A}$-cowaist. 
\end{thm}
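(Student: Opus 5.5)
We reduce to the odd-dimensional, odd-rank situation and then repeat the proof of \cref{widmain}, with the spinor bundle $\mu_\bv=S^1_\bv(T\wwin)$ replaced by the sub-Dirac module of \cref{5.1}.

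\textbf{Reduction.} First pass from $(W,F)$ to $(W^\lozenge,\pro_W^*F)$ from \cref{ahat1}. The product with circles does not change the leafwise geometry along $\pro_W^*F$, so the bounds \eqref{scalmean} persist and the leafwise width is preserved, as in \eqref{bandfol1}. When $q$ is even, one circle direction is absorbed into the foliation, giving $\pro_W^*F\oplus T\bbs^1$. This is still spin and integrable, and by \cref{aliso2} the enlarged foliation remains almost isometric after passing to the Connes fibration.

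\textbf{Main difficulty with the extra circle.} Absorbing a circle changes the rank from $q$ to $q+1$, and hence the coefficient. I would try to avoid this by keeping the leafwise operator built from $F$ only, and placing the circle in $\fp$ (the first splitting of \cref{aliso2}). This preserves the coefficient $\frac{q}{q-1}$ in \cref{part3} while making the ranks of the pieces have the right parity. Note that $m-q$ even is needed so that $\Lambda^*(F_1)$ is $\bbz_2$-graded. Thus we may assume that $\dim W$ is odd, $q$ is odd, $\rk F^\perp$ is even, and that $(W,F)$ has infinite vertical $\widehat{A}$-cowaist in the sense of \cref{ahat}\,(1).

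\textbf{Running Steps 1--3.} Form the Connes fibration $\calw^\infty$ and the double $\wwin$ as in \cref{fiber}, with the splitting $T\wwin=\hfz\oplus\hf\oplus\hff$. Here $\hfz\cong\widetilde{\pi}_r^*F$ is spin of odd rank, $\hf\cong\widetilde{\pi}_r^*F^\perp$ is oriented of even rank, and $\hff=T^V\wwin$ is spin of even rank. In place of \eqref{mudef}, take the sub-Dirac module
\[
\mu_\bv=S^1(\hfz)\otimes\Lambda^*(\hf)\otimes S(\hff),
\]
with the connection \eqref{modcon} defined with respect to $g^{T\wwin}_\bv$. By \cref{rem3} this is locally a Clifford module, and its curvature is given by \eqref{cur1}. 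Twist by $\xi_\varpi$ and double as in \eqref{dirdou1}. Steps~1 and~3 then go through verbatim, with one change: \eqref{cur13} acquires the extra term
\[
\frac18\sum_{i,j\le a}\sum_{k,l=q+1}^m R^{\hf,\bv}_{ijkl}\,c(e_i)c(e_j)\widehat{c}(e_k)\widehat{c}(e_l).
\]
By \eqref{cur31} with $b=1$, this term is $O(\varepsilon^2)$ for $i,j\le q$ and $O(\beta^{-1})$ otherwise. Only the $i,j\le q$ entries are weighted by the relevant sums, so the term is absorbed into $O_r(\beta^{-1}+\varepsilon\beta^{-2})$, and \eqref{cur12} survives. \cref{part3} and \cref{partcur1} are stated for arbitrary Clifford modules, so they apply unchanged.

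\textbf{Index computation.} This is the main obstacle: replacing \eqref{ind11}. By \cref{band2}, the index equals that of the Dirac operator on $\mu_\partial\,\widehat{\otimes}\,\xi_\varpi$ over $\partial_-\ww$. By \cref{isomod}, $\mu_\partial$ is isomorphic as a graded Clifford module (with grading \eqref{grading}) to $\widetilde\mu=S(F_0\cap T\partial)\,\widehat\otimes\,\Lambda^*(\hf)\,\widehat\otimes\, S(\hff)$. Since the index depends only on the symbol class, we may compute it via the Atiyah--Singer theorem for $\widetilde{\mu}\,\widehat{\otimes}\,\xi_\varpi$. The factor $\Lambda^*(\hf)$ with grading $\tau_1$ carries the signature-type operator, whose character contributes $\widehat{A}$ together with the $\widehat{L}$-type correction of $\hf$. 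The vertical part $S(\hff)\otimes S(\hff)^*$ localizes through the Bott-type class $V^\xi$ of \eqref{ww} onto $\iota(\partial_-W)$, producing the Thom class.

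The expected result is a nonzero multiple of $\langle\widehat{A}(T\partial_-W)\ch(E_\varpi),[\partial_-W]\rangle$. The precise characteristic class produced by $\Lambda^*(F_1)$ here (it involves $\widehat{L}$-type factors, cf.\ \cite[Thm.~2.3]{lz}) must be matched against the definition of infinite modified cowaist. I expect this bookkeeping, together with the orientation conventions through \eqref{identi2}, to be the delicate part.

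Once nonvanishing of the index is established, the coercivity estimate \eqref{posi} gives the contradiction as before. \cref{widmain2} then follows from the same argument as \eqref{eq1}--\eqref{eq2}.
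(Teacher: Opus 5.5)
Your treatment of odd $q$ is fine in outline: a circle goes into $\fp$ when $m-q$ is odd, and the extra $\Lambda^*$-curvature term is absorbed via \eqref{cur31}. The case of even $q$, however, is not handled. Putting a circle into $\fp$ changes only the parity of $\rk F^\perp$, while the leaf bundle keeps rank $q$. So ``we may assume $q$ is odd'' does not follow, and the module $S^1(\hfz)\otimes\Lambda^*(\hf)\otimes S(\hff)$ you use does not exist when $q$ is even. No way of distributing the circles among $\fp,\ffp$ meets the parity constraints of \cref{5.1}. (Also, the first splitting in \cref{aliso2} puts the circle into $\ffp$, not $\fp$.)

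The missing idea is to decouple the splitting that builds the Clifford module from the splitting used in the adiabatic estimates:
\begin{itemize}
\item Take $\hgz=\hfz\oplus T\bbs^1$, which is spin of odd rank, for the factor $S^1_\bv(\hgz)$.
\item Give the circle the \emph{unscaled} metric, $g^{\hgz}_\beta=\beta^2 g^{\hfz}\oplus\di\theta^2$.
\item On $\calw^\lozenge_r$, regroup the same metric as $\beta^2 g^{\calf}\oplus g^{\gb}/\varepsilon^2\oplus g^{\ggb}$ with $\ggb=\ff\oplus T\bbs^1$. This splitting is almost isometric by \cref{aliso2}, and $\ggb$ is integrable.
\end{itemize}
Since \cref{part3,partcur1} hold for any Clifford module on $(M,g^{TM}_\bv)$, they apply to $\mu_\bv$ with the rank-$q$ foliation $\calf$. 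This is how the coefficient $\frac{q}{q-1}$ survives even though the module is built from a rank-$(q+1)$ bundle.

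The second gap is the index, and it is not mere bookkeeping: a single index here need not be nonzero. The factor $\Lambda^*(\hg)$ graded by $\tau_1$ contributes $\widehat{L}$, not $\widehat{A}$. So \cref{band2} and Atiyah--Singer give $2^{[\frac{m-q+1}{2}]}\langle\widehat{A}(Q\cap T\partial_-W^\lozenge)\widehat{L}(\gp)\ch(E_\varpi),[\partial_-W^\lozenge]\rangle$, which can vanish even though the pairing in \eqref{ind21} does not.

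The paper reverses the logic:
\begin{itemize}
\item Twist by $E=\phi(\gp)\otimes E_\varpi$ for every polynomial $\phi$ with integral coefficients in exterior and symmetric powers of $\gp$.
\item Over $\calw^\lozenge_r$ this becomes $\phi(\gb)$ with the connection induced by $\nabla^{\gb,\bv}$. Its curvature along $\calf$ is $O(\varepsilon^2)$ by \eqref{cur31}, so \eqref{cur446} holds and the coercivity argument forces every such index to vanish.
\item The classes $\widehat{L}(\gp)\ch(\phi(\gp))$ span the rational Pontryagin classes of $\gp$. Taking $p(\gp)=\widehat{A}(\gp)$ and using $\widehat{A}(T\partial_-W^\lozenge)=\widehat{A}(Q\cap T\partial_-W^\lozenge)\widehat{A}(\gp)$ then contradicts \eqref{ind21}.
\end{itemize}
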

\begin{proof}
We have $\rk F < \dim W$ since $F$ is spin but $W$ is nonspin. Let $W^\lozenge / W$ be the torus such that $W^\lozenge = W \times (W^\lozenge / W)$.  Define $\calw^\lozenge = \calw \times (W^\lozenge / W)$, and similarly define $\calw_r^\lozenge,\ww^\lozenge$. The fibration maps \eqref{cm} and \eqref{dcm} extend to smooth maps
\begin{equation}
    \bar{\pi}\colon \calw^\lozenge \to W^\lozenge
    \quad\text{and}\quad
    \widetilde{\pi}_r\colon \ww^\lozenge \to W^\lozenge,
\end{equation} which act as identities on the $\bbs^1$ factors.

Suppose to the contrary that $\wid_F(W) > t_+ - t_-$. For simplicity, we will omit the pullback maps in the pullback bundles. For example, write $F$ for $\pro_W^*F$. Note that $\wid_{F}(W^\lozenge) = \wid_F(W)$. Repeat Step 1 in the proof of \cref{widmain}. The smooth function \eqref{psi} extends naturally to $\psi\colon \ww^\lozenge\to\bbr$.

{\bf Step 1:} Recall the splitting \eqref{bdreq}. We define subbundles $Q,\gp\subset TW^\lozenge$ by
\begin{equation}\label{tan}
    Q = 
    \begin{cases}
        F, & \text{if } q \text{ is odd},\\
        F \oplus T\bbs^1, & \text{if } q \text{ is even},
    \end{cases}
    \;
    \gp = 
    \begin{cases}
        F^\perp, & \text{if } m-q \text{ is even},\\
        F^\perp\oplus T\bbs^1, & \text{if } m-q \text{ is odd}.
    \end{cases}
\end{equation}
Then $\rk Q$ is odd and $\rk Q^\perp$ is even. Meanwhile, we have
\begin{equation}\label{bdsplit1}
    TW^\lozenge = Q\oplus \gp
    \quad\text{and}\quad
    \gp|_{\partial W^\lozenge}\subset T\partial W^\lozenge.
\end{equation}
Recall the splitting \eqref{spmet2}. We define subbundles $\hgz,\hg\subset T\ww^\lozenge$ by
\begin{equation}\label{q2}
    \hgz = 
    \begin{cases}
        \hfz, & \text{if } q \text{ is odd},\\
        \hfz \oplus T\bbs^1, & \text{if } q \text{ is even},
    \end{cases}
    \;
    \hg = 
    \begin{cases}
        \hf, & \text{if } m-q \text{ is even},\\
        \hf \oplus T\bbs^1, & \text{if } m-q \text{ is odd}.
    \end{cases}
\end{equation}
Then $\rk \hgz$ is odd and $\rk\hg$ is even. Meanwhile, there are isomorphisms $\hgz = \widetilde{\pi}_r^*Q$ and $\hg=\widetilde{\pi}_r^*Q^\perp$. Unless otherwise specified, the pullback bundles are equipped with the pullback metrics. Equip $T\bbs^1$ with the flat metric $\di \theta^2$, where $\theta \in \bbr/2\pi\bbz$. 

For $\bv>0$, let $g^{T\ww^\lozenge}_\bv$ be the metric on $\ww^\lozenge$ defined by the orthogonal splitting
\begin{equation}\label{met31}
    T\ww^\lozenge = \hgz \oplus \hg \oplus \hff, 
    \quad 
    g^{T\ww^\lozenge}_\bv = g^{\hgz}_\beta \oplus \frac{g^{\hg}}{\varepsilon^2} \oplus g^{\hff}
\end{equation}
induced from \eqref{spmet2}, where
\begin{equation}
    g^{\hgz}_\beta \coloneqq \begin{cases}
        \beta^2 g^{\hfz}, & \text{if } q \text{ is odd},\\
        \beta^2 g^{\hfz}\oplus \di\theta^2, & \text{if } q \text{ is even},
    \end{cases}
    \;
    g^{\hg} \coloneqq \begin{cases}
        g^{\hf}, & \text{if } m-q \text{ is even},\\
        g^{\hf}\oplus \di\theta^2, & \text{if } m-q \text{ is odd}.
    \end{cases}
\end{equation} 
Since $\hgz$ and $\hff$ are spin and $\hg$ is orientable, we obtain a Clifford module
\begin{equation}\label{sub2}
    \mu_\bv = S^1_\bv(\hgz) \otimes \Lambda_\bv^*(\hg) \otimes S(\hff)
\end{equation}
on $\bket{\ww^\lozenge,g^{T\ww^\lozenge}_\bv}$ as in \eqref{mu0}. By \eqref{bdsplit1} and \cref{isomod}, there is an identification 
\begin{equation}\label{clifsub2}
    \mu_\bv|_{\partial_- \ww^\lozenge} = S_\bv(\hgz\cap T\partial_-\ww^\lozenge) \,\widehat{\otimes}\, \Lambda_\bv^*(\hg) \,\widehat{\otimes}\, S(\hff).
\end{equation}

We rearrange \eqref{met31} on $\calw^\lozenge_r$ as follows. Define subbundles $\gb,\ggb\subset T\calw^\lozenge$ by\footnote{That is, we keep $\hg|_{\calw^\lozenge_r}$ unchanged and transfer any $T\bbs^1$ factor from $\hgz|_{\calw^\lozenge_r}$ to $\hff|_{\calw^\lozenge_r}$.}
\begin{equation}\label{q1}
    \gb = 
    \begin{cases}
        \f, & \text{if } m-q \text{ is even},\\
        \f \oplus T\bbs^1, & \text{if } m-q \text{ is odd},
    \end{cases}
    \;
    \ggb = 
    \begin{cases}
        \ff, & \text{if } q \text{ is odd},\\
        \ff \oplus T\bbs^1, & \text{if } q \text{ is even}.
    \end{cases}
\end{equation}
 For $\bv>0$, let $g^{T\calw^\lozenge}_\bv$ be the metric on $\calw^\lozenge$ defined by the orthogonal splitting
\begin{equation}\label{met3}
    T\calw^\lozenge = \calf \oplus \gb \oplus \ggb,
    \quad
    g^{T\calw^\lozenge}_\bv = \beta^2 g^\calf \oplus \frac{g^{\gb}}{\varepsilon^2} \oplus g^{\ggb},
\end{equation}
which is induced from \eqref{splittingmetric}. Then \eqref{met3} coincides with \eqref{met31} on $\calw_r^\lozenge$. 

Since $(W,F)$ has infinite modified vertical $\widehat{A}$-cowaist, the compact, oriented, foliated band $(W^\lozenge,F)$ has infinite vertical $\widehat{A}$-cowaist in the sense of \cref{ahat}\,(1). For every $\varpi>0$, there exists a Hermitian vector bundle $E_\varpi$ over $W^\lozenge$ such that
    \begin{equation}\label{ind21}
        \babs{R^{E_\varpi}_{F}} < \varpi \text{ on } W^\lozenge
        \quad \text{and} \quad
        \binn{\widehat{A}(T\partial_-W^\lozenge)\ch(E_\varpi), [\partial_- W^\lozenge]} \ne 0.
    \end{equation}
Let $\phi(\gp)$ be a polynomial in the exterior and symmetric powers of $\gp$ with integral coefficients, and use the same notation for its complexification. Take
    \begin{equation}\label{evr}
        E = \phi(\gp) \otimes E_\varpi
    \end{equation}
in \eqref{xixi}. Let $\nabla^{\gb,\bv}$ be the Euclidean connection on $\gb$ defined as in \eqref{defcon} with respect to the splitting \eqref{met3}. By \eqref{tan} and \eqref{q1}, the bundle $\bar{\pi}^*\gp$ is isomorphic to $\gb$. Hence, $\bar{\pi}^*\phi(\gp)$ is isomorphic to $\phi(\gb)$, which carries a Hermitian metric induced by $g^\gb$ and a Hermitian connection induced by $\nabla^{\gb,\bv}$. Let $R^{\,\phi(\gb),\bv}$ be the curvature of this connection on $\phi(\gb)$. As in \eqref{xibv}, we obtain a $\bbz_2$-graded Hermitian vector bundle $\xi_\varpi = \xi^+_\varpi \oplus \xi^-_\varpi$ over $\ww^\lozenge$ such that
\begin{equation}\label{xisub}
    \xi^\pm_{\varpi} = \big(S_\pm(\ff)^* \otimes \phi(\gb) \otimes \bar{\pi}^*E_\varpi\big) \oplus \caly_\varpi\qquad\text{on $\calw_r^\lozenge$}.
\end{equation}

We can define a Clifford module as in \eqref{s1} and a self-adjoint Fredholm operator $\calb_{h_\delta,\psi,\beta^{-1},s}$ as in \eqref{bts}. The above constructions also applies to $(\wwin)^\lozenge \coloneqq \wwin \times (W^\lozenge/W)$. By \eqref{clifsub2}, \cref{band2} and the Atiyah-Singer Index Theorem, the formula \eqref{ind11} becomes (cf.\ \cite[(2.59)]{zhang17})
\begin{equation}\label{ind22}
    \ind \calb_{h_\delta,\psi,\beta^{-1},s}  = 
    2^{[\frac{m-q+1}{2}]}\big\langle\widehat{A}(Q\cap T\partial_-W^\lozenge)\widehat{L}(\gp)\ch \ket{\phi(\gp)}\ch (E_\varpi),[\partial_-W^\lozenge] \big\rangle.
\end{equation}
Next, we examine the asymptotic estimates in the proof of \cref{widmain}.

{\bf Step 2:} By \cref{aliso2}, the foliated manifold $(\calu,\calf)$ is also almost isometric with respect to \eqref{met3}. Repeat Step 3 in the proof of \cref{widmain}. Suppose $m' = \dim W^\lozenge$ and $n'=\dim \calw^\lozenge$. Recall $q=\rk \calf$. Let $\{e_j\}_{j=1}^q, \{e_j\}_{j=q+1}^{m'}$ and $\{e_j\}_{j=m'+1}^{n'}$ be orthonormal bases of $\calf, \gb$ and $\ggb$ respectively, which are associated with \eqref{met3}. By \eqref{cur31}, \eqref{cur43}, \eqref{ind21} and \eqref{xisub}, we have on $\calw_r^\lozenge$ that (compare \eqref{cur44})
    \begin{align}\label{cur446}
        \begin{split}
            R^{\xi_\varpi}(e_i,e_j)|_{S(\ff)^*\otimes \phi(\gb)\otimes\bar{\pi}^*E_\varpi} &= R^{S_\pm(\ff)^*}(e_i,e_j) \otimes \id \otimes \id \\
            &\qquad\quad + \id\otimes R^{\phi(\gb),\bv}(e_i,e_j) \otimes \id\\
            &\qquad\quad + \id \otimes \id \otimes  R^{\bar{\pi}^*E_\varpi}(e_i,e_j)\\
            &=
            \begin{cases}
                O_r(\varepsilon^2) + O\bket{\frac{\varpi}{\beta^2}}, & \text{if } 1\le i,j \le q,\\
               O_{r,\varpi}\bket{\frac{1}{\beta}}, & \text{otherwise}.
            \end{cases}
        \end{split}
    \end{align}
Consider the partial sum associated with the subbundle $\calf\subset T\calw^\lozenge$ of rank $q$. By \eqref{partterm}, \eqref{cur1} and \eqref{sub2}, for $a\in\{q,n'\}$, we have on $\calw_r^\lozenge$ that (compare \eqref{cur13})
    \begin{align}\label{add2}
        \begin{split}
            \calr^\bv_a|_{\mu_\bv\otimes\xi_\varpi}
            =&-\frac{1}{8}\sum_{i,j=1}^{a}\sum_{k,l=1}^{n'} \!R^{T\calw_r^\lozenge,\bv}_{ijkl}\,c^{\,\mu_\bv}(e_i)\,c^{\,\mu_\bv}(e_j)\,c^{\,\mu_\bv}(e_k)\,c^{\,\mu_\bv}(e_l)\otimes\id\\
            &+\frac{1}{8}\sum_{i,j=1}^{a}\sum_{k,l=q+1}^{m'}\! R^{\gb,\bv}_{ijkl}\,c^{\,\mu_\bv}(e_i)\,c^{\,\mu_\bv}(e_j)\, \widehat{c}^{\,\mu_\bv}(e_k)\, \widehat{c}^{\,\mu_\bv}(e_l) \otimes \id\\
            &\qquad+ \frac{1}{2}\sum_{i,j=1}^{a} c^{\,\mu_\bv}(e_i)\,c^{\,\mu_\bv}(e_j) \otimes R^{\,\xi_\varpi}(e_i,e_j).
        \end{split}
    \end{align}
    Apply \cref{partcur1} and \eqref{cur31} to the almost isometric foliation $(\calw^\lozenge,\calf)$ and the orthogonal splitting \eqref{met3}, the formula \eqref{cur12} remains valid. 
    
    Again, applying \cref{part3} to the almost isometric foliation $(\calw^\lozenge,\calf)$ and the orthogonal splitting \eqref{met3}, the formula \eqref{est9} remains valid:
    \begin{align}
        \begin{split}
            \int_{\calw_r^\lozenge} |D_\bv u_1|^2
            &\ge
            \int_{\calw_r^\lozenge} \bbbac{\frac{q}{q-1}\cdot \frac{k^{\calf}}{4\beta^2} + O_{r,\varpi}\bbbket{\frac{1}{\beta} + \frac{\varepsilon}{\beta^2}} + O\bbbket{\frac{\varpi}{\beta^2}}\!}|u_1|^2\\
            &\qquad\qquad\quad+ \int_{\partial\ww^\lozenge\,\cap\,\calw_r^\lozenge} \bbbac{\frac{qH^{\calf}}{2\beta} + O_r(1)}|u_1|^2,
        \end{split}
    \end{align}
    where $u_1$ is defined as in \eqref{splitvarphi1}. Following the proof of \cref{widmain}, there exist sufficiently large $r>0$ and sufficiently small $\varpi,\bv,\delta>0$ such that
    \begin{equation}\label{b22}
        \ind \calb_{h_\delta,\psi,\beta^{-1},s} = 0.
    \end{equation}
    
    Finally, it follows from \eqref{ind22} and \eqref{b22} that
    \begin{equation}\label{ind60}
        \big\langle\widehat{A}(Q\cap T\partial_-W^\lozenge)\widehat{L}(Q^\perp)\ch \ket{\phi(Q^\perp)}\ch (E_\varpi),[\partial_-W^\lozenge] \big\rangle = 0
    \end{equation}
    for any polynomial $\phi$ with integral coefficients. 
    Any rational Pontryagin class $p(Q^\perp)$ of $Q^\perp$ can be expressed as a rational linear combination of classes of the form $\widehat{L}(Q^\perp)\ch \ket{\phi(Q^\perp)}$. So we have
    \begin{equation}\label{ind61}
        \binn{\widehat{A}(Q\cap T\partial_-W^\lozenge)\,p(Q^\perp)\ch (E_\varpi),[\partial_-W^\lozenge]} = 0;
    \end{equation}
    cf.\ \cite[(2.61)]{zhang17}. By \eqref{bdsplit1}, we have $T\partial_-W^\lozenge = (Q\cap T\partial_-W^\lozenge) \oplus Q^\perp|_{\partial_-W^\lozenge}$. So $\widehat{A}(T\partial_- W^\lozenge) = \widehat{A}(Q\cap T\partial_-W^\lozenge)\widehat{A}(Q^\perp)$. Taking $p(Q^\perp) = \widehat{A}(Q^\perp)$ in \eqref{ind61}, we obtain
    \begin{equation}
        \binn{\widehat{A}(T\partial_- W^\lozenge)\ch (E_\varpi),[\partial_-W^\lozenge]}=0,
    \end{equation}
    which contradicts \eqref{ind21}. So we conclude $\wid_F(W)\le t_+ - t_-$. The analogue version of \cref{widmain2} follows as a corollary.
\end{proof}

\appendix
\section{Alternative Proof of Proposition \ref{key2}}\label{leafwisedis}
\begin{proof}[Proof of \cref{key2}]
    Without loss of generality, we assume $K_0,K_1$ are disjoint and nonempty. For $0<\beta<\beta'\le 1$, we have $g_{\beta'} \le g_\beta$, and then
    \begin{equation}\label{d41}
        \dist_{\beta'}(K_0,K_1) \le \dist_\beta(K_0,K_1)\le \dist_F(K_0,K_1).
    \end{equation}

    Suppose to the contrary that there exists a finite number $C>0$ such that
    \begin{equation}\label{d61}
        \lim_{\beta\to 0^+} \dist_\beta(K_0,K_1) \le C < \dist_F(K_0,K_1).
    \end{equation}
    Since $g\le g_\beta$ for $0<\beta\le 1$, the Riemannian metrics $g_\beta$ are complete since $g$ is. Let $\gamma_\beta\colon [0,1]\to M$ be a minimizing geodesic in $(M,g_\beta)$ realizing $\dist_\beta(K_0,K_1)$.

    {\bf Step 1:} Since $|\dot{\gamma}_\beta|_{\beta}$ is constant, it follows from \eqref{d41}--\eqref{d61} that, for $t_0, t_1 \in [0,1]$,
    \begin{equation}\label{d11}
            \dist\bket{\gamma_\beta(t_0),\gamma_\beta(t_1)}
            \le
            \dist_{\beta}\bket{\gamma_\beta(t_0),\gamma_\beta(t_1)}            
            \le
            C|t_0-t_1|.
    \end{equation}
    So $\{\gamma_\beta\}$ is equicontinuous and uniformly bounded in $(M,g)$. By Arzelà--Ascoli Theorem, there exists a subsequence $\{\gamma_{\beta_j}\}$ converging uniformly to a continuous curve
    \begin{equation}
        \zeta\colon[0,1]\to M.
    \end{equation}
    Since $K_0$ and $K_1$ are compact, we have $\zeta(0)\in K_0$ and $\zeta(1)\in K_1$. Passing to a subsequence, we can assume without loss of generality that
    \begin{equation}\label{d12}
        \dist\bket{\gamma_{\beta_j}(t),\zeta(t)}< j^{-1}
    \end{equation}
    holds for all $t\in[0,1]$ and all integers $j>0$. For each $t\in[0,1]$, let $\eta_{j,t}: [0,1]\to M$ be the minimizing geodesic in $(M,g)$ realizing $\dist\bket{\gamma_{\beta_j}(t),\zeta(t)}$. 
    
    Suppose $q=\rk F$ and $n=\dim M$. Since $\{\gamma_{\beta_j}\}$ is uniformly bounded in $(M,g)$, there exists a constant $\alpha\in(0,1)$ such that, for any $t_0,t_1\in[0,1]$ with $|t_0-t_1|<\alpha$, there exists a foliation chart 
    \begin{equation}\label{fchart}
        \varphi  \colon \scru \to \varphi(\scru) \subset \bbr^q \times \bbr^{n-q}
    \end{equation}
    containing the images $\gamma_{\beta_j}([t_0,t_1])$, $\eta_{j,t}([0,1])$ and $\zeta([t_0,t_1])$ for all $j>\alpha^{-1}$. Here $\varphi$ is a diffeomorphism on the open subset $\scru$, $\bbr^{n-q}$ is the direction transverse to leaves, and we can assume that $\overline{\scru}\subseteq M$ is compact. Define 
    \begin{equation}
        \phi = \pro_2 \circ \varphi\colon \scru \to \bbr^{n-q}.
    \end{equation}

    Let $g_0$ be the standard flat metric on $\bbr^{n-q}$ with distance function $\dist_{g_0}$. Given $j>\alpha^{-1}$, $|t_0-t_1|<\alpha$ and \eqref{fchart}, there exists $C_1>0$ depending on $\scru$ such that
    \begin{equation}\label{d13}
        \phi^*g_0 \le C^2_1 \cdot g \quad\text{and}\quad \phi^*g_0 \le C^2_1\beta^2_j \cdot g_{\beta_j} \qquad \text{on } \scru,
    \end{equation}
    where the second formula follows from $\di\phi|_F = 0$.
    By \eqref{d12}, \eqref{d13} and the last inequality in \eqref{d11}, we have (cf.\ \eqref{dis621}--\eqref{dis62})
    \begin{align}\label{dis14}
        \begin{split}
            &\dist_{g_0}\bket{\phi(\zeta(t_0)),\phi(\zeta(t_1))}\\
            \le\; &
            \sum_{b=0}^1 \dist_{g_0}\bket{\phi(\zeta(t_b)),\phi\ket{\gamma_{\beta_j}(t_b)}}+ \dist_{g_0}\bket{\phi\ket{\gamma_{\beta_j}(t_0)}, \phi\ket{\gamma_{\beta_j}(t_1)}}\\
            \le\; &
            C_1\sum_{b=0}^1 \dist\bket{\zeta(t_b),\gamma_{\beta_j}(t_b)}
            + C_1\beta_j\cdot \dist_{\beta_j}\bket{\gamma_{\beta_j}(t_0), \gamma_{\beta_j}(t_1)}\\
            \le\; & 
            2C_1 j^{-1} + C_1\beta_j C|t_0-t_1|.
        \end{split}
    \end{align}

    Taking $j\to \infty$ in \eqref{dis14}, we obtain $\phi(\zeta(t_0)) = \phi(\zeta(t_1))$. So the continuous curve $\zeta$ is contained in a single leaf $\scrl$, and then $\dist_F(K_0,K_1)<\infty$.

    \textbf{Step 2:} 
    Recall the number $\alpha>0$ chosen in Step 1. Given $t_0,t_1$ such that $|t_0-t_1|<\alpha$, choose a foliation chart $\scru$ as in \eqref{fchart} such that $\overline{\scru}$ is compact. Let $\scrl_0$ be the connected component of $\scru\cap\scrl$ containing $\zeta([t_0,t_1])$. 
    
    The metric $g = g^F \oplus g^{F^\perp}$ induces a normal exponential map $\exp^\perp: F^\perp|_{\scrl_0} \to M$, which restricts to a diffeomorphism from a neighborhood of the zero section in $F^\perp|_{\scrl_0}$ to a tubular neighborhood of $\scrl_0$ in $M$. Let $\Phi$ be the natural projection from this tubular neighborhood to $\scrl_0$. 
    
    We have $\di \Phi = \id$ on $F|_{\scrl_0}$ and $\di \Phi = 0$ on $F^\perp|_{\scrl_0}$. By continuity and the fact $g\le g_{\beta_j}$, for every integer $k>0$, there exists a sufficiently small $\delta_k > 0$ such that the $\delta_k$-neighborhood of the compact subset $\zeta{[t_0,t_1]}$ is contained in $\scru$ and
    \begin{equation}\label{lip2}
        \Phi^*\bket{g^F|_{\scrl_0}} \le (1+k^{-1})^2 \, g \le (1+k^{-1})^2 \, g_{\beta_j}
    \end{equation}
    on the $\delta_k$-neighborhood of $\zeta{[t_0,t_1]}$. Assume $\delta_k\to 0$ as $k\to \infty$. By \eqref{d11}, \eqref{d12}, \eqref{lip2} and $\zeta(t_0) = \Phi(\zeta(t_0))$, for sufficiently large $k>0$ and $j =[\delta_k^{-1}] + 1$, we have 
    \begin{align}\label{estidis1}
        \begin{split}
            &\dist_F\bket{\zeta(t_0),\zeta(t_1)}\\
            \le\;&
            \sum_{b=0}^{1} \dist_F\bket{\Phi(\zeta(t_b)),\Phi\ket{\gamma_{\beta_j}(t_b)}} + 
            \dist_F\bket{\Phi(\gamma_{\beta_j}(t_{0})),\Phi(\gamma_{\beta_j}(t_{1}))}\\
            \le\;&
            (1+k^{-1})\sum_{b=0}^{1} \dist\bket{\zeta(t_b),\gamma_{\beta_j}(t_b)} + 
            (1+k^{-1})\cdot \dist_{\beta_j}\bket{\gamma_{\beta_j}(t_{0}),\gamma_{\beta_j}(t_{1})}\\
            \le\;& (1+k^{-1})\bket{2j^{-1} + C|t_0-t_1|}.
        \end{split}
    \end{align}
    Taking $k\to \infty$ in \eqref{estidis1}, we obtain $\dist_{\scrl}\bket{\zeta(t_0),\zeta(t_1)}\le C|t_0-t_1|$. Dividing $[0,1]$ into small intervals, we deduce that $\dist_F\bket{K_0,K_1}\le C$, which is a contradiction to the assumption \eqref{d61}.
\end{proof}

\section{Proof of Propositions \ref{part} and \ref{partcurva}}\label{app}
\begin{proof}[Proof of \cref{part}]
(cf.\ \cite[II.~Thm.~8.2]{lm89}). For $u\in\Gamma_\cm(S)$, define
\begin{equation}\label{divergence}
    X_1 = \sum_{j=1}^{q}\inn{u,c(e_j)D_qu}\,e_j
    \quad\text{and}\quad
    X_2 = \sum_{j=1}^{q}\inn{u,\nabla_{e_j}u}\,e_j,
\end{equation}
which are independent of the choice of the orthonormal frame $\{e_j\}_{j=1}^q$ spanning $F$. Recall that their divergences on $(M,g^{TM})$ are $\diver X_b = \sum_{k=1}^{n}\binn{\nabla^{TM}_{e_k}X_b,e_k}$. So
\begin{align}
\begin{split}
    \diver X_1 
    &= \sum_{j=1}^{q}e_j\binn{u,c(e_j)D_qu} - \sum_{k=1}^{n}\,\sum_{j=1}^{q}\binn{u,c(e_j)D_qu}\binn{e_j,\nabla^{TM}_{e_k}e_k}\\
    &= \sum_{j=1}^{q}e_j\binn{u,c(e_j)D_qu} - \sum_{k=1}^{n} \binn{u,c\bket{p\nabla^{TM}_{e_k}e_k}D_qu},
\end{split}\\
\begin{split}
    \diver X_2 
    &= \sum_{j=1}^{q}e_j\binn{u,\nabla_{e_j}u} - \sum_{k=1}^{n}\,\sum_{j=1}^{q}\binn{u,\nabla_{e_j}u}\binn{e_j,\nabla^{TM}_{e_k}e_k}\\
    &= \sum_{j=1}^{q}e_j\binn{u,\nabla_{e_j}u} - \sum_{k=1}^{n} \binn{u,\nabla_{p\nabla^{TM}_{e_k}e_k}u}.
\end{split}
\end{align}
By \eqref{a3} and the fact that $c(e_j)$ is skew Hermitian, we have
\begin{align}\label{f0}
    \begin{split}
        |D_q u|^2
        &= \sum_{j=1}^{q}\binn{c(e_j)\nabla_{e_j}u,D_qu} 
        = -\sum_{j=1}^{q}\binn{\nabla_{e_j}u,c(e_j)D_qu}\\
        &= -\sum_{j=1}^{q}\bbac{e_j\binn{u,c(e_j)D_qu} - \binn{u,\nabla_{e_j}\bket{c(e_j)D_qu}}}\\
        &= -\sum_{j=1}^{q}e_j\binn{u,c(e_j)D_qu} + \sum_{j=1}^{q}\bbinn{u,\bbket{c(e_j)\nabla_{e_j} + c\bket{\nabla^{TM}_{e_j}e_j}}D_qu}\\
        &=
        \binn{u,D_q^2 u} - \diver X_1 +  \bbinn{u,c\bbket{\sum_{j=1}^{q} p^\perp\nabla^{TM}_{e_j}e_j- \sum_{k=q+1}^{n}p\nabla^{TM}_{e_k}e_k} D_qu}.
    \end{split}
\end{align}
By \eqref{a3}, we have
\begin{align}\label{licq1}
    \begin{split}
        D_q^2 
        &= \sum_{i,j=1}^{q}c(e_i)\nabla_{e_i}\,c(e_j)\nabla_{e_j}\\
        & = \sum_{i,j=1}^{q}c(e_i)\,c(e_j)\nabla_{e_i}\nabla_{e_j} + \sum_{i,j=1}^{q}c(e_i)\,c\bket{\nabla^{TM}_{e_i}e_j}\nabla_{e_j}.
    \end{split}
\end{align}
The first term in the last line of \eqref{licq1} is
\begin{align}\label{f1}
    \begin{split}
        \sum_{i,j=1}^{q}c(e_i)c(e_j)\nabla_{e_i}\nabla_{e_j} 
        &= - \sum_{j=1}^{q}\nabla_{e_j}\nabla_{e_j} + \frac{1}{2}\sum_{i,j=1}^{q}c(e_i)c(e_j)\bbac{\nabla_{e_i}\nabla_{e_j}- \nabla_{e_j}\nabla_{e_i}}\\
        &= - \sum_{j=1}^{q}\nabla_{e_j}\nabla_{e_j} + \frac{1}{2}\sum_{i,j=1}^{q}c(e_i)c(e_j)\bbac{R(e_i,e_j) + \nabla_{[e_i,e_j]}}\\
        &= - \sum_{j=1}^{q}\nabla_{e_j}\nabla_{e_j} + \calr_q + \frac{1}{2}\sum_{i,j=1}^{q}c(e_i)c(e_j)\nabla_{[e_i,e_j]}.
    \end{split}
\end{align}
The second term in the last line of \eqref{licq1} is
\begin{align}\label{f2}
    \begin{split}
        &\quad\;\sum_{i,j=1}^{q}c(e_i)\,c\bket{\nabla^{TM}_{e_i}e_j}\nabla_{e_j} = \sum_{i,j=1}^{q}\,\sum_{k=1}^{n}c(e_i)\,c\ket{e_k}\nabla_{\inn{\nabla^{TM}_{e_i}e_j,e_k}e_j}\\ 
        &= -\sum_{i,j=1}^{q}\,\sum_{k=1}^{n}c(e_i)\,c\ket{e_k}\nabla_{\inn{e_j,\nabla^{TM}_{e_i}e_k}e_j}= - \sum_{i=1}^{q}\,\sum_{k=1}^{n}c(e_i)\,c(e_k)\nabla_{p\nabla^{TM}_{e_i}e_k}\\
        &= \sum_{i=1}^{q}\nabla_{p\nabla^{TM}_{e_i}e_i} - \sum_{\substack{i\ne k\\1\le i,k \le q}}c(e_i)\,c(e_k)\nabla_{p\nabla^{TM}_{e_i}e_k} - \sum_{i=1}^{q}\,\sum_{k=q+1}^{n}c(e_i)\,c(e_k)\nabla_{p\nabla^{TM}_{e_i}e_k}\\
        &= \sum_{i=1}^{q}\nabla_{p\nabla^{TM}_{e_i}e_i} - \frac{1}{2}\sum_{i,k=1}^{q}c(e_i)\,c(e_k)\nabla_{p[e_i,e_k]} - \sum_{i=1}^{q}\,\sum_{k=q+1}^{n}c(e_i)\,c(e_k)\nabla_{p\nabla^{TM}_{e_i}e_k}.\\
    \end{split}
\end{align}
By \eqref{f1}--\eqref{f2}, the formula \eqref{licq1} becomes
\begin{align}
    \begin{split}
        D_q^2 &= -\Delta_q + \calr_q + \frac{1}{2}\sum_{j,k=1}^{q}c(e_j)\,c(e_k)\nabla_{p^\perp[e_j,e_k]} \\
        &\qquad\qquad\quad\;\;\,- \sum_{j=1}^{q}\,\sum_{k=q+1}^{n}c(e_j)\,c(e_k)\nabla_{p\nabla^{TM}_{e_j}e_k}.\label{licq}
    \end{split}
\end{align}
Meanwhile, we have
\begin{align}\label{f3}
        \begin{split}
            \inn{u,-\Delta_q u} 
            &= -\sum_{j=1}^{q} \bbinn{u,\bbket{\nabla_{e_j}\nabla_{e_j} - \nabla_{p\nabla^{TM}_{e_j}e_j}}u}\\
            & = -\sum_{j=1}^{q} \bbac{e_j\inn{u,\nabla_{e_j}u} - |\nabla_{e_j}u|^2} + \sum_{j=1}^{q} \binn{u,\nabla_{p\nabla^{TM}_{e_j}e_j}u}\\
            &= 
            \sum_{j=1}^{q}|\nabla_{e_j}u|^2 - \diver X_2 - \sum_{k=q+1}^{n}\binn{u, \nabla_{p\nabla^{TM}_{e_k}e_k}u}.
        \end{split}
\end{align}

Now we integrate \eqref{f0}, \eqref{licq} and \eqref{f3} over $M$. In \cref{3.1}, the inward unit normal $\nu\in\Gamma(F|_{\partial M})$. For $b=1,2$, applying the divergence theorem
\begin{equation}
    -\int_M \diver X_b = \int_{\partial M} \tinn{X_b,\nu}
\end{equation}
to \eqref{divergence}, we obtain
\begin{align}\label{licpart}
    \begin{split}
        \int_M|D_q u|^2 &= \int_M \bbac{\sum_{j=1}^{q} |\nabla_{e_j} u|^2 + \inn{u,\calr_q u}} + I_1 + I_2 +I_3\\
        &\qquad\qquad\qquad+ \int_{\partial M}\binn{u,\bket{c(\nu)D_q + \nabla_\nu}u}.
    \end{split}
\end{align}
Since $\inn{\nabla^{TM}_{e_j}\nu,\nu}= 0$, we have
\begin{equation}
    \nabla^{TM}_{e_j}\nu = \sum_{k=2}^{n}\binn{\nabla^{TM}_{e_j}\nu,e_k}\,e_k
    \quad\text{and}\quad c(\nu) \, c\bket{\nabla^{TM}_{e_j}\nu} = - c\bket{\nabla^{TM}_{e_j}\nu} \, c(\nu).
\end{equation}
Under \cref{3.1}, the local orthonormal frame $\{\nu,e_2,\dots,e_q\}$ spans $F|_{\partial M}$, where $e_2,\dots,e_q$ are local sections of $T\partial M$. We have
\begin{align}\label{bdmean}
    \begin{split}
        D^\partial_qu &= \sum_{j=2}^{q}c(e_j)\,c(\nu)\,\bbac{\nabla_{e_j}+ \frac{1}{2}\,c\bket{\nabla^{TM}_{e_j}\nu}\,c(\nu)}u\\
        &= -\sum_{j=2}^{q}c(\nu)\,c(e_j)\nabla_{e_j}u + \frac{1}{2}\sum_{j=2}^{q}c(e_j)\,c\bket{\nabla^{TM}_{e_j}\nu}u\\
        &= -\bket{c(\nu)D_q + \nabla_\nu}u + \frac{1}{2}\sum_{j=2}^{q}\,\sum_{k=2}^{n}\binn{\nabla^{TM}_{e_j}\nu,e_k}\,c(e_j)\,c(e_k)u.
    \end{split}
\end{align}
Since $[e_j,e_k]\in T\partial M$ when $2\le j,k\le q$, we have $\inn{\nu,[e_j,e_k]}=0$. It follows that
\begin{align}\label{bdmean2}
    \begin{split}
        &\sum_{\substack{2\le j,k\le q}}\binn{\nabla^{TM}_{e_j}\nu,e_k}\,c(e_j)\,c(e_k)\\
        &\qquad = -\sum_{j=2}^q\binn{\nabla^{TM}_{e_j}\nu,e_j} - \sum_{\substack{j\ne k\\ 2\le j,k\le q}}\,\binn{\nu,\nabla^{TM}_{e_j}e_k}\,c(e_j)\,c(e_k)\\
        &\qquad = (q-1)H^{TM}_q - \frac{1}{2} \sum_{\substack{j\ne k\\ 2\le j,k\le q}}\binn{\nu,[e_j,e_k]}\,c(e_j)\,c(e_k)\\
        &\qquad = (q-1)H^{TM}_q.
    \end{split}
\end{align}
By \eqref{bdmean}--\eqref{bdmean2}, we have
\begin{align}\label{bdint}
    \begin{split}
        D^\partial_q u &= -\bket{c(\nu)D_q + \nabla_\nu}u + \frac{q-1}{2}H^{TM}_q u\\
        &\qquad\qquad+ \frac{1}{2}\sum_{j=2}^{q}\,\sum_{k=q+1}^{n}\binn{\nabla^{TM}_{e_j}\nu,e_k}\,c(e_j)\,c(e_k)\,u.
    \end{split}
\end{align}
Combining \eqref{licpart} and \eqref{bdint}, we complete the proof.
\end{proof}

\begin{proof}[Proof of \cref{partcurva}]
    This is similar to \cite{lic63}; cf.\ \cite[II.~Thm.~8.8]{lm89}. \phantomsection\label{3.4prop}
    The first Bianchi identity $R^{TM}_{ijkl} + R^{TM}_{jkil} + R^{TM}_{kijl} =0$ implies that
    \begin{equation}\label{distinct}
        \sum_{\substack{1\le i,j,k \le q\\\text{distinct}}}R^{TM}_{ijkl}\,c(e_i)\,c(e_j)\,c(e_k) = 0.
    \end{equation}
    By \eqref{distinct}, we have
    \begin{align}\label{a15}
        \begin{split}
            &\sum_{i,j,k,l=1}^{q} R^{TM}_{ijkl}\,c(e_i)\,c(e_j)\,c(e_k)\,c(e_l)\\
            &\qquad =
            \sum_{l=1}^{q}\,\sum_{i,j=1}^{q}\bbket{R^{TM}_{ijil}\,c(e_j) - R^{TM}_{ijjl}\,c(e_i)}\,c(e_l) \\
            &\qquad = 
            2\sum_{i,j,l=1}^{q}R^{TM}_{ijil}\,c(e_j)\,c(e_l)\\
            &\qquad =
            -2\sum_{i,j=1}^{q}R^{TM}_{ijij} + \sum_{i=1}^{q}\,\sum_{\substack{j\ne l \\1\le j,l \le q}}\bket{R^{TM}_{ijil} - R^{TM}_{ilij}}\,c(e_j)\,c(e_l)\\
            &\qquad =  
            -2\sum_{i,j=1}^{q}R^{TM}_{ijij}.
        \end{split}
    \end{align}
    Similarly, by \eqref{distinct}, we have
    \begin{align}\label{a16}
        \begin{split}
            &\sum_{i,j,l=1}^{q}\,\sum_{k=q+1}^{n} R^{TM}_{ijkl}\,c(e_i)\,c(e_j)\,c(e_k)\,c(e_l)\\ 
            &\qquad =
            \sum_{i,j,k=1}^{q}\,\sum_{l=q+1}^{n} R^{TM}_{ijkl}\,c(e_i)\,c(e_j)\,c(e_k)\,c(e_l)\\
            &\qquad =
            \sum_{l=q+1}^{n}\,\sum_{i,j=1}^{q}\bbket{R^{TM}_{ijil}\,c(e_j) - R^{TM}_{ijjl}\,c(e_i)}\,c(e_l) \\
            &\qquad =
            2\sum_{l=q+1}^{n}\,\sum_{i,j=1}^{q}R^{TM}_{ijil}\,c(e_j)\,c(e_l).
        \end{split}
    \end{align}
    The result follows directly from \eqref{a15}--\eqref{a16}.
\end{proof}

\noindent {\bf Acknowledgments.} The author is grateful to Professor Weiping Zhang for his guidance and invaluable suggestions. The author also thanks Chenkai Song and Professor Guangxiang Su for many enlightening discussions. This work was partially supported by National Key R\&D Program of China (2024YFA1013202), NSFC Grant No.\ 11931007 and 12621005, and Nankai Zhide Foundation.
\printbibliography

\end{document}